\documentclass[a4paper,fleqn,longmktitle]{cas-sc}

\usepackage[numbers]{natbib}

\usepackage{graphicx}%
\usepackage{multirow}%
\usepackage{amsmath,amssymb,amsfonts}%
\usepackage{amsthm}%
\usepackage{mathrsfs}%
\usepackage[title]{appendix}%
\usepackage{xcolor}%
\usepackage{textcomp}%
\usepackage{manyfoot}%
\usepackage{cleveref}
\usepackage{booktabs}%
\usepackage{algorithm}%
\usepackage{algorithmicx}%
\usepackage{algpseudocode}%
\usepackage{listings}%
\usepackage{enumitem}
\usepackage{float}

\theoremstyle{plain}
\newtheorem{theorem}{Theorem}[section]
\newtheorem{proposition}[theorem]{Proposition}
\newtheorem{lemma}[theorem]{Lemma}

\theoremstyle{definition}

\newtheorem{remark}[theorem]{Remark}

\numberwithin{equation}{section}

\usepackage{tabularx}

\def\tsc#1{\csdef{#1}{\textsc{\lowercase{#1}}\xspace}}
\tsc{WGM}
\tsc{QE}

\ExplSyntaxOn
\cs_if_exist:NF \vbox_unpack_clear:N
  {
    \cs_new_protected:Npn \vbox_unpack_clear:N #1
      {
        \vbox_unpack:N #1
        \box_clear:N #1
      }
  }
\ExplSyntaxOff

 

\ExplSyntaxOn
\cs_gset:Npn \__first_footerline:
  {
    \group_begin:
    \small \sffamily
    \hfill \thepage \hfill 
    \group_end:
  }
\ExplSyntaxOff

\usepackage{fancyhdr}
\begin{document}

\shorttitle{Computational Oncology of Chemotaxis-Driven Tumour--Immune Spatial Patterning and Stability}

\title[mode = title]{Computational Oncology of Chemotaxis-Driven Tumour--Immune Spatial Patterning and Stability}

\author[1]{Zonghao Liu}
\fnmark[1]
\ead{liuzonghao42@gmail.com}

\author[2]{Jiguang Yu}
\fnmark[1]
\ead{jyu678@bu.edu}

\author[3,4]{Lei Su}
\fnmark[1]
\ead{sulei2023@ia.ac.cn}

\author[5]{Louis Shuo Wang}
\cormark[1]
\fnmark[1]
\ead{wang.s41@northeastern.edu}

\author[3]{Yang Du}
\ead{yang.du@ia.ac.cn}

\author[1,6,7]{Jingfeng Liu}
\cormark[1]
\ead{drjingfeng@126.com}

\affiliation[1]{organization={Department of Hepatopancreatobiliary Surgery, Clinical Oncology School of Fujian Medical University},
            city={Fuzhou},
            postcode={350014},
            state={Fujian},
            country={China}}

\affiliation[2]{organization={College of Engineering, Boston University},
            city={Boston},
            postcode={02215},
            state={MA},
            country={United States}}

\affiliation[3]{organization={CAS Key Laboratory of Molecular Imaging, Institute of Automation, Chinese Academy of Sciences},
            city={Beijing},
            postcode={100190},
            country={China}}

\affiliation[4]{organization={University of Chinese Academy of Sciences},
            city={Beijing},
            postcode={100190},
            country={China}}

\affiliation[5]{organization={Department of Mathematics, Northeastern University},
            city={Boston},
            postcode={02115},
            state={MA},
            country={United States}}

\affiliation[6]{organization={NHC Key Laboratory of Cancer Metabolism},
            city={Fuzhou},
            postcode={350014},
            state={Fujian},
            country={China}}

\affiliation[7]{organization={Fujian Key Laboratory of Advanced Technology for Cancer Screening and Early Diagnosis},
            city={Fuzhou},
            postcode={350014},
            state={Fujian},
            country={China}}

\cortext[1]{Corresponding author}

\fntext[1]{These authors contributed equally to this work as co-first authors.}

\begin{abstract}
We develop a reaction--diffusion--chemotaxis model for spatial tumour--immune--chemokine dynamics that couples logistic tumour growth, immune-mediated killing, chemokine-dependent immune recruitment, chemotactic migration, and signal production. For the nondimensional system, we establish local classical solvability, nonnegativity, a uniform tumour-density bound, and global mass estimates for the immune and chemokine components. The tumour-free equilibrium is stable precisely when the baseline immune-control index satisfies \(\sigma_0/\delta>1\), whereas positive homogeneous coexistence is characterized by a scalar nonlinear equation. Linearization in the Neumann Laplacian eigenbasis yields a mode-dependent cubic dispersion relation, showing that chemotaxis does not alter the tumour-invasion threshold but can destabilize homogeneous coexistence through a finite-wavelength oscillatory instability above a critical sensitivity \(\xi_c\). A conservative finite-volume discretization with upwind chemotactic fluxes and implicit backward differentiation formula time integration is used to test these predictions. Numerical experiments recover the analytical equilibria and growth rates, identify the dominant unstable mode, reproduce the transition to spatial heterogeneity, and quantify the effects of immune recruitment, decay, and diffusion on the stability boundary. Grid-refinement, mass-balance, residual, and nonnegativity diagnostics support the computational reliability of the results.
\end{abstract}

\begin{keywords}
computational oncology; \\
tumour–immune–chemokine dynamics; \\
reaction–diffusion–chemotaxis system; \\
tumour invasion threshold; \\
linear stability; \\
dispersion relation; \\
finite-wavelength oscillatory instability; \\
pattern formation; \\
finite-volume method; \\
sensitivity analysis.

\MSC[2020] 35K57; 92C17; 35B35; 35B36; 65M08; 92C50
\end{keywords}

\maketitle

\section{Introduction}\label{sec:introduction}

\subsection{Motivation: computational tumour modelling}
\label{subsec:motivation-computational-tumour-modelling}

Cancer progression is governed by interacting processes across molecular,
cellular, and tissue scales \cite{beeghly2023measuring,desoyer2025computational}. At the molecular scale, cytokines, chemokines,
growth factors, and metabolic signals regulate proliferation, death, migration,
and immune activation \cite{tiwari2025molecular}. At the cellular scale, tumour cells, immune effector
cells, stromal cells, and endothelial cells interact through direct contact and
through diffusible mediators \cite{kaminska2015role,joshi2026dynamic,he2022extracellular}. 
At the tissue scale, these local interactions produce spatially heterogeneous patterns of tumour expansion, immune infiltration, immune exclusion, necrosis, invasion fronts, and treatment
response \cite{giuliani2025immune,biswas2022inference,liang2026separation}. Therefore, tumour progression is not only a temporal growth process;
it is also a spatially structured dynamical process.

Spatial mathematical models are particularly useful for describing these
mechanisms because they connect local cell--signal interactions with
macroscopic tumour behaviour. 
Continuum models based on reaction--diffusion
equations describe cell proliferation, death, signal production, degradation,
and passive spreading \cite{kondo2010reaction}. 
Chemotaxis terms describe directed cell migration along
chemical gradients, which is especially relevant when immune cells respond to
chemokine fields generated inside the tumour microenvironment \cite{mempel2024chemokines,liu2025bidirectional}. 
In this way, reaction--diffusion--chemotaxis systems provide a mechanistic framework for
studying how microscopic signalling and cell movement generate macroscopic
invasion, immune infiltration, immune exclusion, or coexistence patterns
\cite{matzavinos2004mathematical,tao2024global,wang2026elliptic,li2026global,ai2015reaction,ke2022analysis,kiselev2022chemotaxis}.

From a computational viewpoint, such models also provide a bridge between
mathematical analysis and numerical simulation. A biologically meaningful tumour
model should preserve nonnegativity of cell densities and chemical
concentrations, respect no-flux tissue boundaries when the model is posed on a
closed tissue region, and produce interpretable stability or instability
criteria. These requirements are not merely technical. Negative cell densities
are biologically meaningless, artificial boundary fluxes may distort tumour
growth or immune infiltration, and unstable numerical schemes may create
patterns that are artifacts of the discretization rather than consequences of
the model.

This work contributes to the development of mathematically analyzable and
numerically reliable partial differential equation (PDE) models for tumour-related
phenomena. The emphasis is not only on simulating spatial tumour--immune
dynamics, but also on identifying stability thresholds and designing numerical
diagnostics that preserve the biological constraints of nonnegativity and
no-flux tissue boundaries. The resulting framework is intended to support
mechanistic investigation of how chemokine-mediated immune recruitment and
chemotactic migration shape tumour--immune spatial organization.

\subsection{Biological setting: tumour--immune--chemokine interactions}
\label{subsec:biological-setting}

We consider a spatial tumour microenvironment represented by a bounded tissue
domain $\Omega\subset\mathbb R^d$, where $d=1,2$, or $3$ depending on the
application. The model contains three primary state variables: 
\begin{itemize}
    \item $T(t,x)$ is \text{tumour-cell density}.
    \item $E(t,x)$ is \text{immune effector-cell density}.
    \item $A(t,x)$ is \text{chemokine concentration}.
\end{itemize}

Here $t\ge 0$ denotes time and $x\in\Omega$ denotes spatial position. The
variable $T$ represents the local density of proliferating tumour cells. The
variable $E$ represents immune effector cells, such as cytotoxic lymphocytes or
other anti-tumour immune populations. The variable $A$ represents a chemokine or
aggregate chemotactic signal that promotes immune recruitment and guides immune
cell movement.

The biological feedback structure underlying the model is summarized in
Figure~\ref{fig:tumour-immune-chemokine-feedback}. The model is a computational oncology
framework coupling tumour growth, immune killing, chemokine production, immune
recruitment, and chemotactic immune migration.

\begin{figure}[htbp]
\centering
    \includegraphics[width=\linewidth]{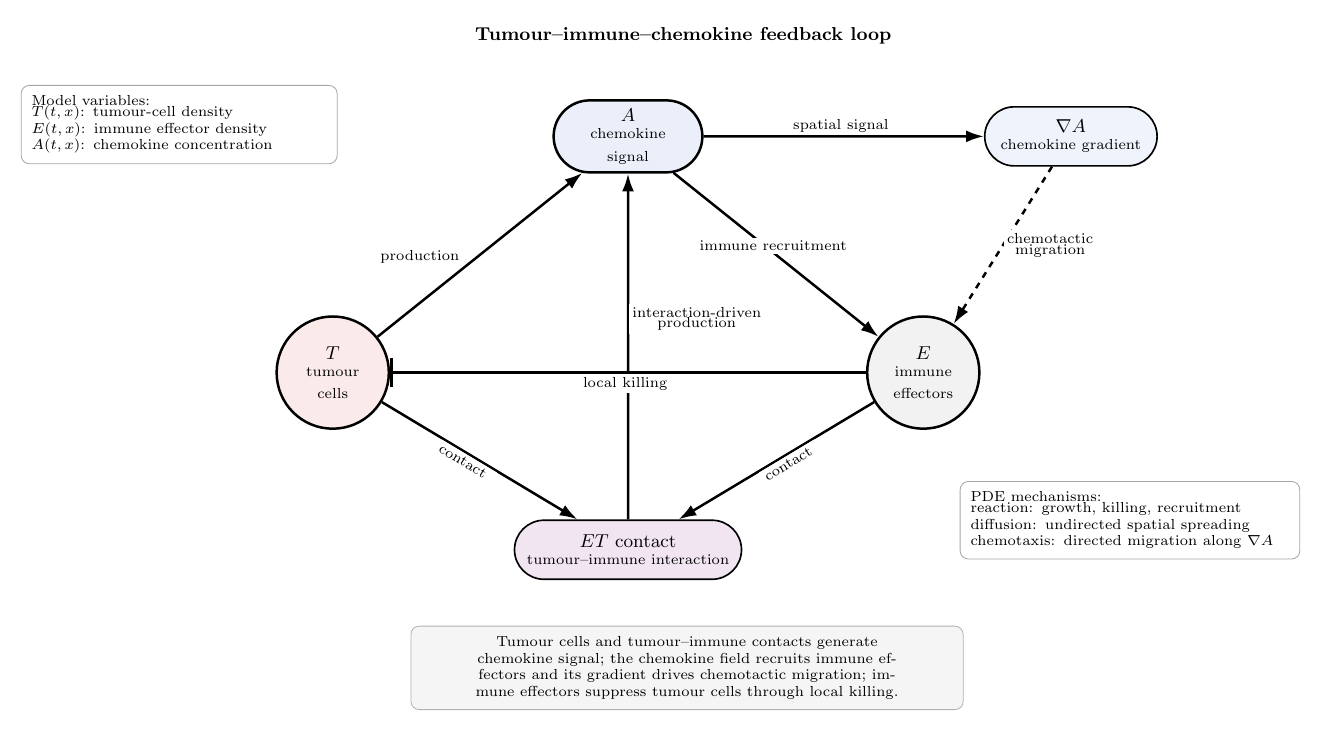}
\caption{
Tumour--immune--chemokine feedback loop used in the computational oncology
model. Tumour cells \(T\) produce chemokine \(A\), while tumour--immune contacts
\(ET\) provide an additional chemokine source. The chemokine field recruits
immune effector cells \(E\), and its spatial gradient \(\nabla A\) drives
chemotactic immune migration. Immune effectors suppress tumour cells through
local killing. This feedback structure is represented mathematically by the
reaction--diffusion--chemotaxis system \eqref{eq:dimensional-model}.}
\label{fig:tumour-immune-chemokine-feedback}
\end{figure}
\clearpage

The biological interpretation of the model is as follows. Tumour cells
proliferate locally and are limited by tissue-level carrying capacity, so the
baseline tumour dynamics are represented by logistic growth. Immune effector
cells kill tumour cells through local tumour--immune contact, producing a
negative interaction term in the tumour equation. Chemokines recruit immune
cells into the tumour microenvironment and create spatial gradients that direct
immune-cell migration. Thus, the immune population is influenced both by local
source terms, representing recruitment or activation, and by chemotactic drift
along the chemokine gradient. Finally, chemokines are produced by tumour cells
and by tumour--immune interactions, while natural degradation or clearance
reduces the chemokine concentration.

In schematic form, the biological mechanisms are
\[
T \ \text{grows logistically},
\qquad
E \ \text{kills } T,
\qquad
A \ \text{recruits and attracts } E,
\qquad
T \ \text{and } ET \ \text{produce } A .
\]
These assumptions lead naturally to a reaction--diffusion--chemotaxis system:
diffusion accounts for undirected spatial spreading, reaction terms describe
local tumour--immune--chemokine interactions, and chemotaxis represents directed
immune migration toward increasing chemokine concentration. The purpose of the
paper is to analyze this model mathematically, identify thresholds separating
homogeneous and spatially heterogeneous regimes, and construct a numerical
method that respects the biological structure of the equations.

\subsection{Related work and modelling context}
\label{subsec:related-work}

The present work sits at the intersection of several active research areas, which
we briefly survey here to situate our contribution.

\paragraph{Mathematical and computational oncology.}
Quantitative modelling has become central to the study of cancer as a multiscale,
spatially structured process. Integrative and computational frameworks combine
mechanistic models with experimental and clinical data to interrogate tumour
initiation, progression, heterogeneity, and treatment response
\cite{anderson2008integrative,altrock2015mathematics,yu2026microscopic,mcdonald2023computational,gopukumar2026multiscale,yu2026size}. Recent literature emphasizes both the
maturation of mechanistic models toward clinical decision support and the
increasing integration of mechanistic and data-driven approaches, including the
community roadmap of Rockne et al.\ \cite{rockne20192019} and the emerging programme
of mechanistic learning \cite{metzcar2024review,lan2025shallow,cai2026optimal,de2025radiation,laslo2025mechanistic}. The model studied here is mechanistic
and analytically tractable, in the spirit of this broader effort to extract
interpretable thresholds and mechanisms from minimal but biologically grounded
systems.

\paragraph{Tumour--immune interaction models.}
Dynamical models of tumour--immune competition originate with the work of
Kuznetsov et al.\ \cite{kuznetsov1994nonlinear,gao2022rolling}, whose ordinary-differential-equation
model of the cytotoxic T-lymphocyte response reproduces dormancy, immune escape,
and recurrent dynamics. Subsequent models incorporated immunotherapy and immune
recruitment \cite{kirschner1998modeling,de2001mathematical,mehdizadeh2023targeting,yu2026beyond,barrera2025impact}; a broad survey of
non-spatial tumour--immune models is given by Eftimie et al.\ \cite{eftimie2011interactions}.
These models capture the temporal balance between tumour growth and immune
control that our homogeneous equilibrium analysis also exhibits, but they do not
resolve spatial organization.

\paragraph{Spatial tumour modelling and the microenvironment.}
Spatially resolved models describe how local cell--signal interactions generate
tissue-scale structure. Reaction--diffusion and taxis-based models of
tumour-induced angiogenesis \cite{yu2026rigorous,hadjigeorgiou2024hybrid,anderson1998continuous,agosti2023image}, avascular tumour growth
\cite{wang2026breakdown,roose2007mathematical,ferreira2002reaction,chaplain1996avascular}, and the spatio-temporal response of cytotoxic T-lymphocytes to
a solid tumour \cite{matzavinos2004mathematical} demonstrate that directed
migration and diffusible signals can produce heterogeneous invasion and immune
infiltration patterns, including travelling waves and irregular spatial
distributions. Agent-based and hybrid platforms such as PhysiCell
\cite{ghaffarizadeh2018physicell,wang2025multi,savic2022heterogeneous} complement continuum approaches for the multicellular
tumour microenvironment. Furthermore, recent coupled PDE--agent-based frameworks have rigorously elucidated the bidirectional feedback between hypoxia-driven angiogenesis and therapeutic resistance, demonstrating how spatial vascular heterogeneity and resistant niches emerge from multiscale continuous and stochastic interactions \cite{wang2025analysis1}. Our model contributes a continuum treatment in which the
spatial structure arises specifically from chemokine-directed immune chemotaxis.

\paragraph{Chemokines, immune infiltration, and hot versus cold tumours.}
On the biological side, chemokines are the principal guidance cues that recruit
and position immune effector cells within the tumour microenvironment
\cite{nagarsheth2017chemokines,chaudhary2025role,liang2025global,foeng2022harnessing}. The density, location, and organization of tumour-infiltrating
lymphocytes are strongly prognostic \cite{galon2006type,lopez2025biological,yu2026pattern,brummel2023tumour}, motivating the now-standard
classification of tumours as immune ``hot'', ``altered'', or ``cold'' according
to their infiltration state \cite{galon2019approaches}. A central obstacle to
immunotherapy is immune exclusion, in which effector T cells are present but
physically kept away from cancer cells by features of the microenvironment
\cite{joyce2015t,yu2024extracellular,wang2026algebraic,bruni2023cancer}. The transition between spatially homogeneous coexistence
and chemotaxis-driven heterogeneity studied in this paper is a mathematical
caricature of precisely this hot/cold, infiltrated/excluded dichotomy.

\paragraph{Chemotaxis equations and their numerical analysis.}
Mathematically, the immune chemotaxis term places the model within the
Keller--Segel family of chemotaxis systems \cite{keller1970initiation,hillen2009user,wang2025analysis,horstmann20031970},
for which aggregation, blow-up, and boundedness under logistic damping or critical
sensitivity have been extensively analysed \cite{winkler2010aggregation,tao2012boundedness,wang2026damage,arumugam2021keller}.
Because chemotaxis systems concentrate mass and can form steep gradients,
specialized structure-preserving discretizations are required: conservative
finite-volume schemes \cite{filbet2006finite}, positivity
preserving central-upwind schemes \cite{chertock2008second,yu2026age}, and conservative
upwind finite-element methods \cite{saito2007conservative} have all been developed for
Keller--Segel-type models. Our finite-volume scheme with upwind chemotactic flux
and discrete positivity and residual diagnostics follows this line.

\paragraph{Contribution relative to the Special Issue.}
This paper contributes to advanced computational oncology in three ways. First,
it formulates a minimal tumour--immune--chemokine
reaction--diffusion--chemotaxis model that couples immune-mediated tumour
killing, chemokine-mediated immune recruitment, and directed immune migration
along chemokine gradients. Second, it derives analytically interpretable
thresholds for tumour-free control, homogeneous tumour--immune coexistence, and
chemotaxis-driven finite-wavelength instability, thereby connecting biological
feedback mechanisms with stability and pattern-formation criteria. Third, it
implements a conservative finite-volume scheme with no-flux boundary treatment,
upwind chemotactic flux, positivity monitoring, mass-balance diagnostics, and
post-hoc residual certification. These components link mathematical modelling,
PDE-based stability analysis, and numerically reliable simulation in a form that
is directly aligned with computational oncology studies of tumour-related
reaction--diffusion and chemotaxis systems.

\subsection{Outline}
\label{subsec:outline}

Section~\ref{sec:model-formulation} formulates the dimensional model, states the
boundary and initial conditions, and nondimensionalizes the system.
Section~\ref{sec:numerical-method} presents the structure-preserving
finite-volume scheme.
Section~\ref{sec:mathematical-analysis} establishes local solvability,
positivity, and a priori estimates; the technical proofs are collected in
Appendix~\ref{app:technical-estimates}.
Section~\ref{sec:equilibria-biological-regimes} identifies the equilibria and the
tumour-free immune-control threshold.
Section~\ref{sec:linear-stability} derives the dispersion relation about
coexistence, with the full Routh--Hurwitz algebra given in
Appendix~\ref{app:routh-hurwitz}.
Section~\ref{sec:numerical-experiments} reports the
numerical experiments, each verifying a specific theoretical result.
Section~\ref{sec:conclusion} concludes.

\section{Methods (Model Formulation)}\label{sec:model-formulation}

\subsection{Dimensional tumour--immune--chemokine model}
\label{subsec:dimensional-model}

Let $\Omega\subset\mathbb R^d$, $d\in\{1,2,3\}$, be a bounded smooth tissue
domain with outward unit normal vector $n$ on $\partial\Omega$. We denote by
$T(t,x)$, $E(t,x)$, and $A(t,x)$ tumour-cell density, immune effector-cell density,
and chemokine concentration, respectively, at time $t\ge0$ and position $x\in\Omega$.

We model tumour--immune--chemokine interactions by the following
reaction--diffusion--chemotaxis system:
\begin{equation}\label{eq:dimensional-model}
\begin{aligned}
T_t &= d_T\Delta T+rT\left(1-\frac{T}{K}\right)-\kappa ET,\\
E_t &= d_E\Delta E-\chi\nabla\cdot(E\nabla A)
      +s_0+s_1\frac{A}{\eta+A}-\mu E-\rho ET,\\
A_t &= d_A\Delta A+\alpha_TT+\alpha_CET-\lambda A.
\end{aligned}
\end{equation}
Here $d_T,d_E,d_A>0$ are diffusion coefficients. The tumour population grows
logistically with intrinsic growth rate $r>0$ and carrying capacity $K>0$.
The term $-\kappa ET$ represents immune-mediated tumour killing, with killing
rate $\kappa>0$. The immune population diffuses with coefficient $d_E>0$ and
moves chemotactically in response to the chemokine gradient $\nabla A$, with
chemotactic sensitivity $\chi\ge0$. The source term
\[
s_0+s_1\frac{A}{\eta+A}
\]
represents baseline immune influx together with chemokine-enhanced immune
recruitment. The parameter $s_0\ge0$ is the baseline immune supply, $s_1\ge0$ is
the maximal chemokine-induced recruitment rate, and $\eta>0$ is the half-saturation
constant. Immune cells are removed at rate $\mu>0$ and may be locally exhausted
or lost through tumour--immune interaction at rate $\rho>0$. The chemokine is
produced by tumour cells at rate $\alpha_T>0$ and by tumour--immune interactions
at rate $\alpha_C>0$, and is degraded or cleared at rate $\lambda>0$.
To visually summarize the complex spatial and biological interactions captured by the system \eqref{eq:dimensional-model}, we present a schematic diagram of the reaction--diffusion--chemotaxis topology in Figure \ref{fig:model_topology}. 

\begin{figure}[htbp]
    \centering
    \includegraphics[width=\linewidth]{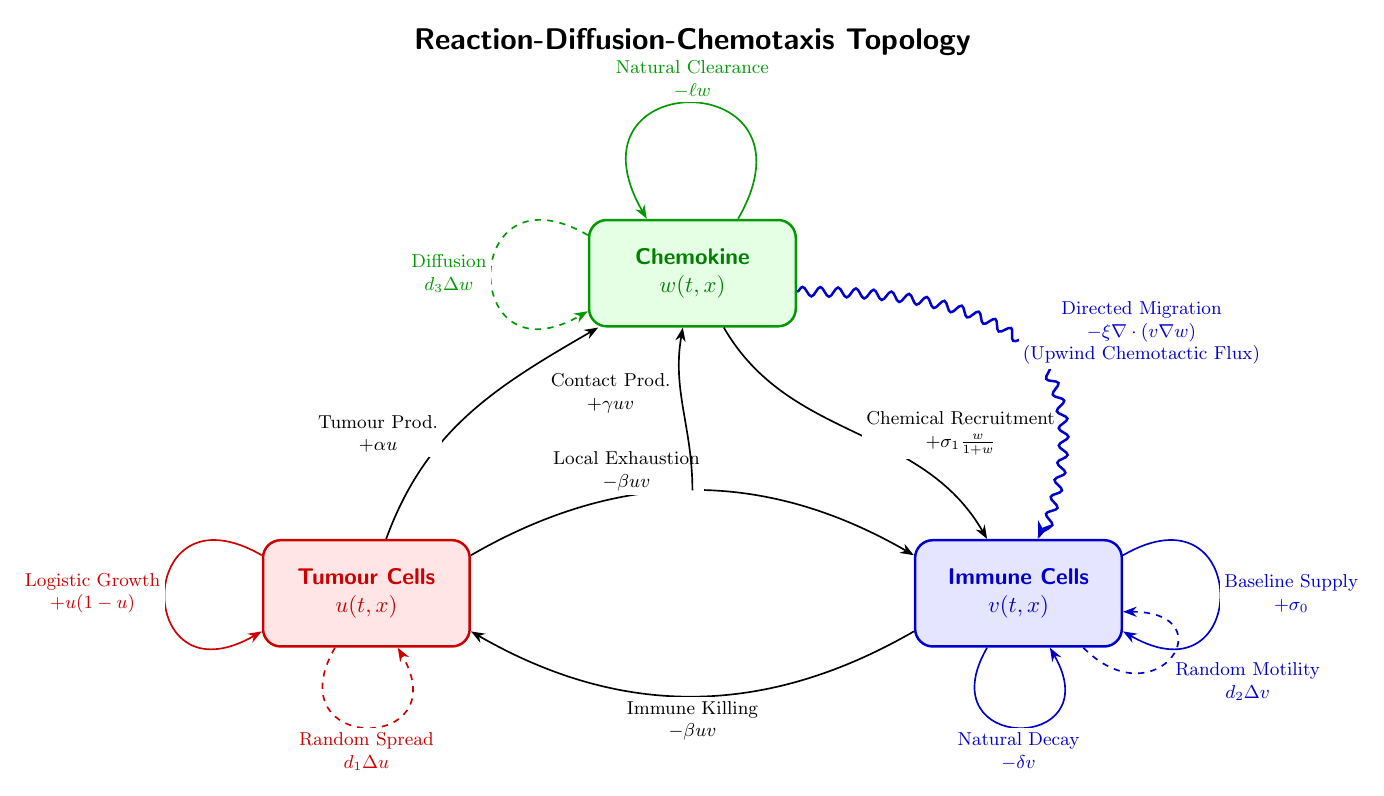}
    \caption{Schematic representation of the reaction--diffusion--chemotaxis topology for the tumour--immune--chemokine interactions. Solid lines indicate local reaction kinetics, such as population growth, cell death, and chemical secretion. Dashed lines represent random spatial motility (diffusion), while the wavy blue line illustrates the directed chemotactic migration of immune cells up the chemokine gradient. The non-dimensional variables $(u, v, w)$ depicted here correspond to the populations $(T, E, A)$.}
    \label{fig:model_topology}
\end{figure}
\clearpage

The model is intentionally minimal. It retains the mechanisms needed to describe
spatial tumour growth, immune killing, chemokine-mediated immune recruitment,
and directed immune migration, while remaining analytically tractable and
numerically testable. In particular, the chemotaxis term
$\displaystyle -\chi\nabla\cdot(E\nabla A)$
is the mechanism through which chemokine gradients can amplify spatial
heterogeneity and generate immune infiltration or immune-exclusion patterns.

\subsection{Boundary and initial conditions}
\label{subsec:boundary-initial-conditions}

We impose biologically natural no-flux boundary conditions. These conditions
describe a closed tissue region across whose boundary there is no net movement
of tumour cells, immune cells, or chemokine:
\begin{equation}\label{eq:noflux}
\nabla T\cdot n=0,\qquad
(d_E\nabla E-\chi E\nabla A)\cdot n=0,\qquad
\nabla A\cdot n=0
\quad\text{on }\partial\Omega,\ t>0 .
\end{equation}
The immune-cell condition is written in conservative form because the total
immune flux is
\[
J_E=-d_E\nabla E+\chi E\nabla A .
\]
Thus, $J_E\cdot n=0$ is equivalent to
\[
(d_E\nabla E-\chi E\nabla A)\cdot n=0.
\]
This is the physical no-flux condition for the chemotactic immune population.

The initial data are assumed to satisfy
\begin{equation}\label{eq:initial-data-dimensional}
T(0,x)=T_0(x),\qquad
E(0,x)=E_0(x),\qquad
A(0,x)=A_0(x),
\qquad x\in\Omega,
\end{equation}
where
\[
T_0(x)\ge0,\qquad E_0(x)\ge0,\qquad A_0(x)\ge0
\quad\text{for all }x\in\Omega .
\]
When classical solutions are considered, the initial data are also assumed to be
sufficiently smooth and compatible with the boundary conditions.

For the eigenfunction-based linear stability calculation in
Section~\ref{sec:linear-stability}, we impose the stronger componentwise
Neumann condition
\[
\nabla T\cdot n=\nabla E\cdot n=\nabla A\cdot n=0
\quad\text{on }\partial\Omega .
\]
This condition implies the natural no-flux condition \eqref{eq:noflux} whenever
$\nabla A\cdot n=0$. Its purpose is analytical: it allows the Neumann Laplacian
eigenbasis to be used directly in the linear stability and dispersion-relation
calculations. Table~\ref{tab:dimensional-parameters} lists dimensional parameters in the tumour--immune--chemokine model.

\begin{table}[htbp]
\centering
\caption{Dimensional parameters with representative biological orders of
magnitude. Ranges are indicative of solid-tumour tissue; cell motilities are
effective (random-motility) coefficients from intravital imaging, chemokine
transport from tissue diffusion measurements, and kinetic rates from validated
tumour--immune models. Values are used only to fix orders of magnitude, not to
model a specific tumour.}
\label{tab:dimensional-parameters}
\begin{tabularx}{\textwidth}{@{} l >{\raggedright\arraybackslash}X >{\raggedright\arraybackslash}X l @{}}
\toprule
Symbol & Meaning & Representative estimate & Source \\
\midrule
$d_T$ & tumour-cell random motility & $\sim 10^{-2}$--$1\ \mu\mathrm{m}^2\,\mathrm{s}^{-1}$ & (weakly motile) \\
$d_E$ & immune-cell motility        & $\sim 1$--$70\ \mu\mathrm{m}^2\,\mathrm{min}^{-1}$ ($\sim0.02$--$1\ \mu\mathrm{m}^2\,\mathrm{s}^{-1}$) & \cite{miller2003autonomous} \\
$d_A$ & chemokine diffusion         & $\sim 10$--$100\ \mu\mathrm{m}^2\,\mathrm{s}^{-1}$ ($10^{-7}$--$10^{-6}\ \mathrm{cm}^2\,\mathrm{s}^{-1}$) & \cite{thurley2015three} \\
$r$   & tumour growth rate          & $\sim 0.1$--$0.5\ \mathrm{day}^{-1}$ (doubling days--weeks) & \cite{de2005validated} \\
$K$   & tumour carrying capacity    & $\sim 10^{8}$--$10^{9}\ \mathrm{cells\,cm}^{-3}$ & tissue density \\
$\kappa$ & immune killing rate      & lysis-based, $O(10^{-1})\ \mathrm{day}^{-1}$ per effector density & \cite{de2005validated} \\
$\chi$ & chemotactic sensitivity    & $\sim 10^{-8}$--$10^{-6}\ \mathrm{cm}^2\,\mathrm{s}^{-1}$ per unit signal & (chemotaxis assays) \\
$s_0$ & baseline immune influx      & $\sim 10^{-2}\ \mathrm{day}^{-1}$ & \cite{de2005validated} \\
$s_1$ & max.\ chemokine recruitment & comparable to/above $s_0$ & \cite{de2005validated} \\
$\eta$ & half-saturation constant   & signal-dependent (nM range) & (receptor affinity) \\
$\mu$ & immune-cell loss rate       & $\sim 0.01$--$0.2\ \mathrm{day}^{-1}$ & \cite{de2005validated} \\
$\rho$ & tumour-induced immune loss  & $O(\mu)$ & \cite{de2005validated} \\
$\alpha_T,\alpha_C$ & chemokine production & per-cell secretion, $O(10^{-4})\ \mathrm{ng}\,\mathrm{cells}^{-1}\mathrm{day}^{-1}$ & \cite{poznansky2000active,sutherland1990functional} \\
$\lambda$ & chemokine degradation   & $\sim 0.03$--$1\ \mathrm{h}^{-1}$ (half-life min--h) & \cite{thurley2015three} \\
$L$ & reference tissue length       & $\sim 10^{2}\ \mu\mathrm{m}$--$1\ \mathrm{cm}$ & (signalling niche) \\
\bottomrule
\end{tabularx}
\end{table}

\subsection{Nondimensionalization}
\label{subsec:nondimensionalization}

We now nondimensionalize \eqref{eq:dimensional-model}. Let
\[
\widetilde t=rt,\qquad
\widetilde x=\frac{x}{L},\qquad
u=\frac{T}{K},\qquad
v=\frac{\kappa}{r}E,\qquad
w=\frac{A}{\eta},
\]
where $L>0$ is a reference tissue length scale. In these variables, $u$ is the
tumour density normalized by the carrying capacity, $v$ is the immune density
scaled by the tumour-killing rate relative to tumour growth, and $w$ is the
chemokine concentration normalized by the immune-recruitment half-saturation
constant.

Dropping tildes after rescaling, we obtain
\begin{equation}\label{eq:nondim-model}
\begin{aligned}
u_t &= d_1\Delta u+u(1-u-v),\\
v_t &= d_2\Delta v-\xi\nabla\cdot(v\nabla w)
      +\sigma_0+\sigma_1\frac{w}{1+w}-\delta v-\beta uv,\\
w_t &= d_3\Delta w+\alpha u+\gamma uv-\ell w.
\end{aligned}
\end{equation}
The nondimensional diffusion coefficients are
\[
d_1=\frac{d_T}{rL^2},\qquad
d_2=\frac{d_E}{rL^2},\qquad
d_3=\frac{d_A}{rL^2}.
\]
The nondimensional chemotactic sensitivity is
\[
\xi=\frac{\chi\eta}{rL^2}.
\]
The immune recruitment, decay, and tumour--immune loss parameters are
\[
\sigma_0=\frac{\kappa s_0}{r^2},\qquad
\sigma_1=\frac{\kappa s_1}{r^2},\qquad
\delta=\frac{\mu}{r},\qquad
\beta=\frac{\rho K}{r}.
\]
The chemokine production and degradation parameters are
\[
\alpha=\frac{\alpha_T K}{r\eta},\qquad
\gamma=\frac{\alpha_C K}{\kappa\eta},\qquad
\ell=\frac{\lambda}{r}.
\]
The nondimensional no-flux boundary conditions become
\begin{equation}\label{eq:nondim-noflux}
\nabla u\cdot n=0,\qquad
(d_2\nabla v-\xi v\nabla w)\cdot n=0,\qquad
\nabla w\cdot n=0
\quad\text{on }\partial\Omega,\ t>0 .
\end{equation}
For the analytical linear stability calculation, we again use the stronger
componentwise Neumann condition
\begin{equation}\label{eq:no_flux_pointwise}
\nabla u\cdot n=\nabla v\cdot n=\nabla w\cdot n=0
\quad\text{on }\partial\Omega .
\end{equation}

Table~\ref{tab:nondimensional-parameters} lists nondimensional parameters. The nondimensional system \eqref{eq:nondim-model} is the main model analyzed in
the remainder of the paper. The first equation describes logistic tumour growth
and immune-mediated tumour removal. The second equation describes immune-cell
diffusion, chemotactic migration, chemokine-mediated recruitment, natural
immune-cell loss, and tumour-induced immune loss. The third equation describes
chemokine diffusion, production by tumour cells and tumour--immune interactions,
and chemokine degradation. This formulation makes explicit the mechanisms by
which chemokine-mediated immune recruitment and chemotactic migration may
stabilize a homogeneous tumour--immune coexistence state or destabilize it
toward spatially heterogeneous patterns.

\begin{table}[htbp]
\centering
\caption{Nondimensional parameters in \eqref{eq:nondim-model}.}
\label{tab:nondimensional-parameters}
\begin{tabularx}{\textwidth}{lll}
\toprule
Symbol & Definition & Interpretation \\
\midrule
$d_1$ & $d_T/(rL^2)$ & tumour diffusion relative to growth \\
$d_2$ & $d_E/(rL^2)$ & immune diffusion relative to tumour growth \\
$d_3$ & $d_A/(rL^2)$ & chemokine diffusion relative to tumour growth \\
$\xi$ & $\chi\eta/(rL^2)$ & chemotactic sensitivity \\
$\sigma_0$ & $\kappa s_0/r^2$ & baseline immune supply \\
$\sigma_1$ & $\kappa s_1/r^2$ & chemokine-induced immune recruitment \\
$\delta$ & $\mu/r$ & immune decay relative to tumour growth \\
$\beta$ & $\rho K/r$ & tumour-induced immune loss strength \\
$\alpha$ & $\alpha_TK/(r\eta)$ & tumour-driven chemokine production \\
$\gamma$ & $\alpha_CK/(\kappa\eta)$ & interaction-driven chemokine production \\
$\ell$ & $\lambda/r$ & chemokine degradation relative to tumour growth \\
\bottomrule
\end{tabularx}
\end{table}

\paragraph{Biological plausibility of the nondimensional base set.}
The nondimensional groups inherit clear biological orderings. Because chemokine molecules diffuse far faster than immune cells migrate, which in turn move far faster than bulk tumour cells, one has $d_A \gg d_E \gg d_T$ and hence $d_3 \gg d_2 \gg d_1$; the base values $d_3=1$, $d_2=0.1$, $d_1=0.01$ reproduce exactly this three-order separation. A representative dimensional set ($r=0.2\ \mathrm{day}^{-1}$, tissue scale $L\approx 6$--$7\ \mathrm{mm}$, motilities $d_T:d_E:d_A \approx 0.1:1:10\ \mu\mathrm{m}^2\,\mathrm{s}^{-1}$, immune loss $\mu=0.06\ \mathrm{day}^{-1}$) maps onto the base values $d_1=0.01$, $d_2=0.10$, $d_3=1.00$, $\delta=0.30$, $\ell=1.00$. We note one deliberate simplification: fast chemokine turnover ($\lambda\sim 1\ \mathrm{h}^{-1}$) would give $\ell\gg 1$, placing the chemokine near a quasi-steady state; we retain $\ell=O(1)$ for analytical transparency, which rescales but does not remove the chemotaxis-driven finite-wavelength instability.

\section{Methods (Numerical Scheme)}
\label{sec:numerical-method}

This section describes the numerical method used to simulate the nondimensional tumour--immune--chemokine system \eqref{eq:nondim-model}.
The numerical method is designed around three requirements: 
(i) $u_h$, $v_h$, $w_h\ge0$, (ii) discrete no-flux boundary conditions,
(iii) mass-balance and residual diagnostics.
These requirements are important because negative cell densities or artificial
boundary fluxes would have no biological interpretation and could distort the
onset of spatial tumour--immune patterns. Chemotaxis systems are well known to
concentrate mass and form steep gradients, which has motivated a substantial
literature on structure-preserving discretizations for Keller--Segel-type models
\cite{filbet2006finite,chertock2008second,yu2026fullcovariance,saito2007conservative}.

For clarity, we first describe the method on a one-dimensional interval
\(\Omega=(0,L)\). The extension to rectangular two-dimensional domains is
obtained by applying the same finite-volume construction in each coordinate
direction.

\subsection{Finite-volume discretization and no-flux treatment}
\label{subsec:finite-volume-discretization}

Let \(N\in\mathbb N\), \(h=L/N\), and define cell centers
\[
x_i=\left(i-\frac12\right)h,\qquad i=1,\dots,N.
\]
Let \(u_i(t)\), \(v_i(t)\), and \(w_i(t)\) denote cell averages over the finite
volume
\[
C_i=(x_{i-1/2},x_{i+1/2}).
\]
The semi-discrete finite-volume scheme is written in conservative form as
\begin{equation}\label{eq:fv-semidiscrete-u}
\frac{d u_i}{dt}
=
-\frac{F^u_{i+1/2}-F^u_{i-1/2}}{h}
+
u_i(1-u_i-v_i),
\end{equation}
\begin{equation}\label{eq:fv-semidiscrete-v}
\frac{d v_i}{dt}
=
-\frac{F^v_{i+1/2}-F^v_{i-1/2}}{h}
+
\sigma_0+\sigma_1\frac{w_i}{1+w_i}
-\delta v_i-\beta u_i v_i,
\end{equation}
and
\begin{equation}\label{eq:fv-semidiscrete-w}
\frac{d w_i}{dt}
=
-\frac{F^w_{i+1/2}-F^w_{i-1/2}}{h}
+
\alpha u_i+\gamma u_i v_i-\ell w_i .
\end{equation}

The tumour and chemokine diffusive fluxes are approximated by centered finite
differences:
\begin{equation}\label{eq:u-w-fluxes}
F^u_{i+1/2}
=
-d_1\frac{u_{i+1}-u_i}{h},
\qquad
F^w_{i+1/2}
=
-d_3\frac{w_{i+1}-w_i}{h}.
\end{equation}
The immune-cell flux contains both diffusion and chemotaxis. We write the
continuous immune flux as
\[
J_v=-d_2\nabla v+\xi v\nabla w .
\]
Accordingly, the numerical immune flux is
\begin{equation}\label{eq:v-flux}
F^v_{i+1/2}
=
-d_2\frac{v_{i+1}-v_i}{h}
+
\xi v^{\rm up}_{i+1/2}\frac{w_{i+1}-w_i}{h}.
\end{equation}
Here \(v^{\rm up}_{i+1/2}\) is the upwind value selected according to the
discrete chemotactic velocity
\[
a_{i+1/2}
=
\xi\frac{w_{i+1}-w_i}{h}.
\]
Specifically,
\begin{equation}\label{eq:upwind-value}
v^{\rm up}_{i+1/2}
=
\begin{cases}
v_i, & a_{i+1/2}\ge0,\\[2mm]
v_{i+1}, & a_{i+1/2}<0.
\end{cases}
\end{equation}
The chemotactic flux is therefore discretized by an upwind finite-volume flux.
This choice is first-order in the advective chemotaxis term but improves
robustness and positivity in parameter regimes where steep immune gradients
form.

The no-flux boundary conditions are imposed by setting the boundary fluxes to
zero:
\begin{equation}\label{eq:discrete-noflux}
F^u_{1/2}=F^u_{N+1/2}=0,\qquad
F^v_{1/2}=F^v_{N+1/2}=0,\qquad
F^w_{1/2}=F^w_{N+1/2}=0.
\end{equation}
This implementation is conservative at the discrete level. Summing
\eqref{eq:fv-semidiscrete-u}--\eqref{eq:fv-semidiscrete-w} over all cells gives
cancellation of interior fluxes and leaves only reaction contributions.
The finite-volume spatial discretization framework is illustrated in Figure \ref{fig:fv_scheme}.

\begin{figure}[htbp]
    \centering
    \includegraphics[width=\linewidth]{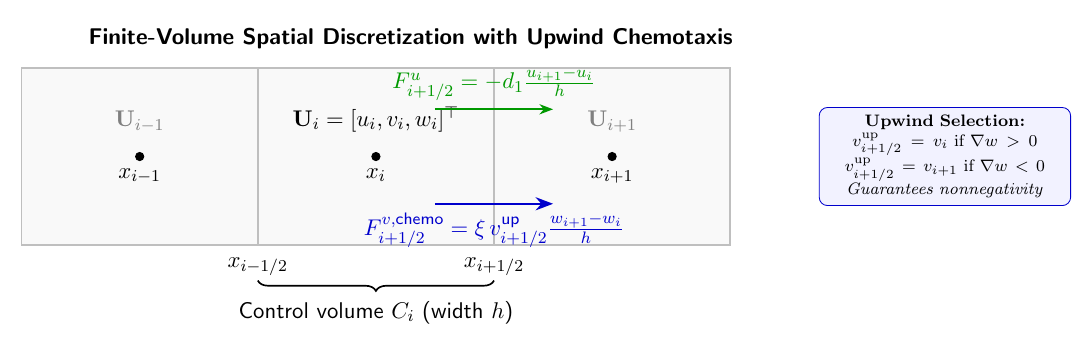}
    \caption{Schematic of the 1D finite-volume spatial discretization framework. The domain is divided into control volumes $C_i$ of uniform width $h$, with nodes located at the cell centers $x_i$ and boundaries at the interfaces $x_{i\pm1/2}$. A centered difference approximation evaluates the diffusion flux $F^u_{i+1/2}$. In contrast, an upwind scheme dictates the chemotactic flux $F^{v,\text{chemo}}_{i+1/2}$ to effectively capture directed migration and ensure the strict non-negativity of cell densities.}
    \label{fig:fv_scheme}
\end{figure}
\clearpage

\subsection{Mass-balance diagnostics}
\label{subsec:mass-balance-diagnostics}

The discrete masses are defined by
\[
M_u(t)=h\sum_{i=1}^N u_i(t),\qquad
M_v(t)=h\sum_{i=1}^N v_i(t),\qquad
M_w(t)=h\sum_{i=1}^N w_i(t).
\]
Because of the conservative finite-volume form and the no-flux boundary
treatment, the numerical masses satisfy the discrete balance laws
\begin{equation}\label{eq:discrete-mass-u}
\frac{dM_u}{dt}
=
h\sum_{i=1}^N u_i(1-u_i-v_i),
\end{equation}
\begin{equation}\label{eq:discrete-mass-v}
\frac{dM_v}{dt}
=
h\sum_{i=1}^N
\left(
\sigma_0+\sigma_1\frac{w_i}{1+w_i}
-\delta v_i-\beta u_i v_i
\right),
\end{equation}
and
\begin{equation}\label{eq:discrete-mass-w}
\frac{dM_w}{dt}
=
h\sum_{i=1}^N
\left(
\alpha u_i+\gamma u_i v_i-\ell w_i
\right).
\end{equation}
These identities are used as mass-balance diagnostics in the numerical
experiments. In particular, the discrepancy between the left-hand side and the
right-hand side of \eqref{eq:discrete-mass-u}--\eqref{eq:discrete-mass-w}
provides a simple check on temporal accuracy and boundary-flux implementation.

\subsection{BDF time stepping with positivity diagnostics}
\label{subsec:bdf-time-stepping}

The semi-discrete system
\eqref{eq:fv-semidiscrete-u}--\eqref{eq:fv-semidiscrete-w} is stiff when
diffusion is strong, the mesh is fine, or chemotactic gradients are large.
Therefore, we use an implicit backward differentiation formula (BDF) time
discretization. Let
\[
U^n=(u_1^n,\dots,u_N^n,v_1^n,\dots,v_N^n,w_1^n,\dots,w_N^n)^\top
\]
denote the vector of all cell averages at time \(t_n\). In compact form, the
semi-discrete system is
\[
\frac{dU}{dt}=\mathcal R_h(U),
\]
where \(\mathcal R_h\) contains the finite-volume fluxes and reaction terms.

For example, the first-order BDF scheme is backward Euler:
\begin{equation}\label{eq:backward-euler}
\frac{U^{n+1}-U^n}{\Delta t}
=
\mathcal R_h(U^{n+1}).
\end{equation}
In computations requiring higher temporal accuracy, a second-order BDF method is
used after one backward Euler starting step:
\begin{equation}\label{eq:bdf2}
\frac{3U^{n+1}-4U^n+U^{n-1}}{2\Delta t}
=
\mathcal R_h(U^{n+1}).
\end{equation}
The nonlinear algebraic system at each time step is solved by Newton iteration
with tolerance \(\varepsilon_{\rm newton}\). Time steps are reduced if Newton
iteration fails to converge or if the positivity diagnostic described below is
violated beyond the prescribed tolerance.

We do not claim that the BDF method by itself guarantees positivity. Instead,
positivity is promoted by the conservative finite-volume form, the no-flux
boundary treatment, and the upwind chemotactic flux, and is monitored explicitly
during the computation. At each time step we compute
\begin{equation}\label{eq:positivity-diagnostic}
m_{\min}^{n}
=
\min\left\{
\min_i u_i^n,\,
\min_i v_i^n,\,
\min_i w_i^n
\right\}.
\end{equation}
The run is accepted only if
\begin{equation}\label{eq:positivity-tolerance}
m_{\min}^{n}\ge -\varepsilon_{\rm pos}
\end{equation}
for all reported time steps, where \(\varepsilon_{\rm pos}\) is a small solver
tolerance. In all reported runs, the minimum of each component remains
nonnegative up to solver tolerance.

\subsection{Residual diagnostics}
\label{subsec:residual-diagnostics}

To distinguish genuine pattern formation from numerical artifacts, we compute
post-hoc residuals for the three equations. Let \(D_t\), \(\Delta_h\), and
\(\nabla_h\cdot\) denote the discrete time derivative, Laplacian, and divergence
operators induced by the finite-volume method. The residuals are defined by
\begin{equation}\label{eq:residual-u}
R_u^n
=
D_t u^n
-
d_1\Delta_h u^n
-
u^n(1-u^n-v^n),
\end{equation}
\begin{equation}\label{eq:residual-v}
R_v^n
=
D_t v^n
-
d_2\Delta_h v^n
+
\xi\nabla_h\cdot(v^n\nabla_h w^n)
-
\sigma_0-\sigma_1\frac{w^n}{1+w^n}
+\delta v^n+\beta u^n v^n,
\end{equation}
and
\begin{equation}\label{eq:residual-w}
R_w^n
=
D_t w^n
-
d_3\Delta_h w^n
-
\alpha u^n-\gamma u^n v^n+\ell w^n.
\end{equation}
We report the normalized residual indicator
\begin{equation}\label{eq:normalized-residual}
\mathcal E_{\rm res}^n
=
\frac{
\|R_u^n\|_2+\|R_v^n\|_2+\|R_w^n\|_2
}{
1+\|u^n\|_2+\|v^n\|_2+\|w^n\|_2
}.
\end{equation}
Together with the positivity diagnostic and the mass-balance identities,
\(\mathcal E_{\rm res}^n\) provides a numerical certificate that the computed
spatial patterns are consistent with the PDE system and not merely artifacts of
the discretization.

\subsection{Consistency, positivity, and convergence status of the scheme}
\label{subsec:positivity-cfl}

We complement the a posteriori diagnostics with an a priori analysis of the
finite-volume discretization.

\medskip\noindent\textbf{Consistency.}
On a smooth solution, Taylor expansion of the fluxes about each face gives, for
the diffusion and reaction terms, a local truncation error of order
$\mathcal O(h^2)$ (centred face gradients and midpoint reaction quadrature are
second order). The first-order upwind chemotactic flux
\eqref{eq:v-flux}--\eqref{eq:upwind-value} contributes a numerical-diffusion
error of size
$\dfrac h2\,|a_{i+1/2}|\,\partial_{xx}v(x_i)+\dfrac{h}{2}\partial_x(|a|(x_i))\partial_xv(x_{i-1/2})+\mathcal O(h^2)$, i.e.\
$\mathcal O(h)$ where the chemotactic drift $a=\xi\partial_x w$ is nonzero. Hence
the semi-discrete scheme is consistent, with local truncation error
\[
\begin{aligned}
\tau_h&=\mathcal O(h)\ \text{in general},\\[3pt]
\tau_h&=\mathcal O(h^2)\ \text{where the leading upwind defect $\partial_x(\lvert a\rvert \partial_xv)$ vanishes.}
\end{aligned}
\]
This is consistent with the observed order $1.92$ in
Experiment~\ref{subsec:exp6-convergence}, where the smooth diffusion--reaction
part dominates; the order degrades toward $1$ only in strongly
chemotaxis-dominated regimes with steep immune gradients.

\medskip\noindent\textbf{Positivity with a precise step-size condition.}
We \emph{prove} that the explicit scheme preserves nonnegativity under a
computable step-size restriction. Consider the fully explicit forward-Euler
update of the finite-volume scheme on a uniform Cartesian mesh of spacing $h$ in
dimension $D\in\{1,2,3\}$, with no-flux boundaries imposed as zero boundary-face
fluxes. For an interior face in coordinate direction $e$ let
$a_{i+\frac12 e}=\xi\,(w_{i+e}-w_i)/h$ be the discrete chemotactic velocity, and
write $a^{\pm}=\max\{\pm a,0\}$. Let $n_i\le 2D$ be the number of present
neighbours of cell $i$, and set
\[
M_u^n=\max_i u_i^n,\qquad M_v^n=\max_i v_i^n,\qquad
G^n=\max_{\text{faces}}\frac{|w^n_{j}-w^n_i|}{h}.
\]

\begin{proposition}[discrete positivity]\label{prop:positivity-cfl}
Assume $u^n,v^n,w^n\ge0$ componentwise. If
\begin{equation}\label{eq:cfl-positivity}
\Delta t\left(\frac{2D\max\{d_1,d_2,d_3\}}{h^2}
+\frac{2D\,\xi\,G^n}{h}
+\max\{\,M_u^n+M_v^n,\ \delta+\beta M_u^n,\ \ell\,\}\right)\le 1,
\end{equation}
then $u^{n+1},v^{n+1},w^{n+1}\ge0$ componentwise. Hence if
\eqref{eq:cfl-positivity} holds at every step the scheme preserves nonnegativity
for all $n$.    
\end{proposition}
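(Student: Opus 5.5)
The plan is to write each forward-Euler update as a combination of old values with nonnegative coefficients plus nonnegative source terms. For each component $z\in\{u,v,w\}$ the update is
\[
z_i^{n+1}=z_i^n+\Delta t\Big(-\tfrac{1}{h}\textstyle\sum_{\text{faces}}F^z+\text{reaction}_i\Big).
\]
I will collect every term proportional to $z_i^n$ into a \emph{diagonal coefficient} $c_i$, and check that all remaining terms are nonnegative: neighbour values $z_j^n$ with nonnegative weights, and nonnegative sources. Then $z_i^{n+1}\ge c_i z_i^n\ge0$ once $c_i\ge0$, and \eqref{eq:cfl-positivity} will be shown to give exactly $c_i\ge0$. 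Boundary faces carry zero flux by \eqref{eq:discrete-noflux}, so they drop out, and only the $n_i\le 2D$ present neighbours contribute.

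\textbf{Components $u$ and $w$.} For $u$, diffusion gives $u_i^n\bigl(1-\Delta t\, n_i d_1/h^2\bigr)+\Delta t\,(d_1/h^2)\sum_j u_j^n$. The reaction $u_i(1-u_i-v_i)\ge -u_i(u_i+v_i)\ge -u_i^n(M_u^n+M_v^n)$. Hence
\[
c_i\ge 1-\Delta t\bigl(2Dd_1/h^2+M_u^n+M_v^n\bigr)\ge0 .
\]
For $w$ the source $\alpha u_i+\gamma u_iv_i\ge0$ by hypothesis, and the loss $-\ell w_i$ is linear. This gives $c_i\ge 1-\Delta t\bigl(2Dd_3/h^2+\ell\bigr)\ge0$.

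\textbf{Component $v$, the main step.} The chemotactic upwind flux is the term I expect to need care. Write $a_f$ for the face velocity in the outward direction from cell $i$, with $a_f=\xi(w_j-w_i)/h$ for neighbour $j$. The net chemotactic contribution to $dv_i/dt$ is $-\frac1h\sum_f\bigl(a_f^+v_i-a_f^-v_j\bigr)$. This follows by checking the upwind rule \eqref{eq:upwind-value} at both the right face $i+\tfrac12$ and the left face $i-\tfrac12$. At the left face the sign convention flips, so one verifies that the outgoing flux from cell $i$ is again $a^+v_i$ for the velocity measured toward the neighbour. Outgoing parts are thus multiplied by $v_i$ itself and incoming parts by $v_j\ge0$ with weight $a_f^-\ge0$. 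Since $|a_f|\le\xi G^n$, the outgoing diagonal loss is at most $n_i\xi G^n/h\le 2D\xi G^n/h$.

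The reaction terms split into the nonnegative source $\sigma_0+\sigma_1 w_i/(1+w_i)\ge0$ (using $w_i\ge0$) and the loss $-(\delta+\beta u_i)v_i\ge-(\delta+\beta M_u^n)v_i$. Therefore
\[
c_i\ge 1-\Delta t\Bigl(2Dd_2/h^2+2D\xi G^n/h+\delta+\beta M_u^n\Bigr),
\]
which is nonnegative under \eqref{eq:cfl-positivity}. The $2D\xi G^n/h$ term appears only for $v$; for $u$ and $w$ it simply makes the condition stronger than needed. Each component's coefficient is bounded below by the same expression with the maxima inserted, so \eqref{eq:cfl-positivity} suffices for all three simultaneously.

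Induction on $n$ then yields the final assertion. The condition is re-imposed at each step with the updated $M_u^n,M_v^n,G^n$, which are finite because the iterates are finite and nonnegative.
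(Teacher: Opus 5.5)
Your proof is correct and follows essentially the same route as the paper. Both write each forward-Euler update as a nonnegative combination of neighbour values and nonnegative sources, plus a centre weight $1-\Delta t\,\kappa_i$, and both split the upwind flux as $a^{+}v_i-a^{-}v_j$. Your outward-velocity convention reproduces exactly the paper's neighbour weights $a^{+}_{i-\frac12 e}$ and $a^{-}_{i+\frac12 e}$. You then use \eqref{eq:cfl-positivity} to make every centre weight nonnegative, as the paper does.
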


\begin{proof}
The upwind rule \eqref{eq:upwind-value} writes the discrete chemotactic flux as
$a_{i+\frac12 e}\,v^{\rm up}_{i+\frac12 e}
=a^{+}_{i+\frac12 e}v_i-a^{-}_{i+\frac12 e}v_{i+e}$. Substituting into the update
and collecting terms, each component takes the form
\[
c_i^{\,n+1}=\sum_{j\sim i}\theta_{ij}\,c_j^{\,n}+\Delta t\,S_i
+\bigl(1-\Delta t\,\kappa_i\bigr)c_i^{\,n},
\qquad \theta_{ij}\ge0,\ \ S_i\ge0,
\]
as follows.

\emph{Tumour.}
$u_i^{n+1}=\dfrac{\Delta t\,d_1}{h^2}\sum_{j\sim i}u_j^{\,n}
+\bigl(1-\Delta t\,\kappa_i^{u}\bigr)u_i^{\,n}$, with
$\kappa_i^{u}=\dfrac{d_1 n_i}{h^2}-\bigl(1-u_i^{n}-v_i^{n}\bigr)
\le \dfrac{2D d_1}{h^2}+M_u^n+M_v^n$
(the growth $+u_i$ is retained inside the centre weight and only lowers
$\kappa_i^{u}$).

\emph{Chemokine.}
$w_i^{n+1}=\dfrac{\Delta t\,d_3}{h^2}\sum_{j\sim i}w_j^{\,n}
+\Delta t\bigl(\alpha u_i^{n}+\gamma u_i^{n}v_i^{n}\bigr)
+\bigl(1-\Delta t\,\kappa_i^{w}\bigr)w_i^{\,n}$, with
$\kappa_i^{w}=\dfrac{d_3 n_i}{h^2}+\ell\le \dfrac{2D d_3}{h^2}+\ell$ and source
$\ge0$.

\emph{Immune.}
The neighbour weight of $v_{i-e}$ is
$\Delta t\bigl(\dfrac{d_2}{h^2}+\dfrac{a^{+}_{i-\frac12 e}}{h}\bigr)\ge0$ and of
$v_{i+e}$ is
$\Delta t\bigl(\dfrac{d_2}{h^2}+\dfrac{a^{-}_{i+\frac12 e}}{h}\bigr)\ge0$; the
source $\Delta t\bigl(\sigma_0+\sigma_1\dfrac{w_i}{1+w_i}\bigr)\ge0$; and
\[
\kappa_i^{v}=\frac{d_2 n_i}{h^2}
+\frac1h\sum_{e}\bigl(a^{+}_{i+\frac12 e}+a^{-}_{i-\frac12 e}\bigr)
+\delta+\beta u_i^{n}
\le \frac{2D d_2}{h^2}+\frac{2D\,\xi G^n}{h}+\delta+\beta M_u^n,
\]
where $\sum_{e}\bigl(a^{+}_{i+\frac12 e}+a^{-}_{i-\frac12 e}\bigr)
\le\sum_{\text{faces at }i}|a|\le 2D\,\xi G^n$.

In every case the neighbour weights and sources are nonnegative and the centre
weight is $1-\Delta t\,\kappa_i$. The bracket in \eqref{eq:cfl-positivity}
dominates each of $\kappa_i^{u},\kappa_i^{v},\kappa_i^{w}$, so
\eqref{eq:cfl-positivity} gives $\Delta t\,\kappa_i\le1$ for every cell and
component; hence every centre weight is nonnegative. Thus $c_i^{\,n+1}$ is a
nonnegative combination of nonnegative quantities, so $c_i^{\,n+1}\ge0$.
Induction on $n$ completes the proof.
\end{proof}

\noindent Because $v$ has no a priori $L^\infty$ bound (only the mass bound of
Proposition~\ref{prop:mass-estimates}), the peak $M_v^n$ may grow as immune cells
aggregate, and \eqref{eq:cfl-positivity} then automatically reduces $\Delta t$ ---
consistent with Remark~\ref{rem:no-global-linfty-overclaim}; the tumour factor is
controlled by the discrete logistic bound
$M_u^n\le\max\{1,\|u^0\|_\infty\}$. In the implementation the step is selected
adaptively to satisfy \eqref{eq:cfl-positivity} with a safety factor below one, so
positivity holds by the Proposition. The implicit BDF integrator used for the
stiff one-dimensional runs carries no such guarantee and is instead monitored a
posteriori via \eqref{eq:positivity-diagnostic}--\eqref{eq:positivity-tolerance}.

\medskip\noindent\textbf{Convergence status.}
For sufficiently smooth solutions, the spatial finite-volume operator is
consistent, with truncation error \(O(h)\) in general because of the upwind
chemotactic flux and \(O(h^2)\) when its leading first-order contribution
vanishes. Proposition~1 establishes nonnegativity of the forward-Euler
discretization under~\eqref{eq:cfl-positivity}, while the conservative flux form gives the
discrete mass-balance identities. These properties alone, however, do not
constitute a convergence proof for the nonlinear cross-diffusion system.
Such a proof would additionally require mesh-uniform stability estimates and
either a discrete compactness argument or a direct error estimate around the
classical solution; see, for example,
\cite{eymard2000finite,filbet2006finite,liu2026ftu}. We therefore interpret the
grid-refinement results of Experiment~7.6 as numerical evidence of
self-convergence. The observed order \(1.92\) reflects the smooth,
diffusion-dominated regime considered there; the formal asymptotic spatial
order of the upwind scheme is first order in general.

\subsection{Extension to two-dimensional rectangular domains}
\label{subsec:two-dimensional-extension}

On a rectangular domain
\[
\Omega=(0,L_x)\times(0,L_y),
\]
we use a Cartesian finite-volume grid with cell averages
\[
u_{i,j}(t),\qquad v_{i,j}(t),\qquad w_{i,j}(t).
\]
Diffusive fluxes are computed on cell faces by centered differences, while the
chemotactic immune flux is computed face-by-face using the upwind value of
\(v\) determined by the corresponding discrete chemotactic velocity. For
example, in the \(x\)-direction,
\[
F^{v,x}_{i+1/2,j}
=
-d_2\frac{v_{i+1,j}-v_{i,j}}{h_x}
+
\xi v^{\rm up}_{i+1/2,j}
\frac{w_{i+1,j}-w_{i,j}}{h_x},
\]
and the analogous formula is used in the \(y\)-direction. No-flux boundary
conditions are again imposed by setting all boundary face fluxes to zero.

This dimension-by-dimension finite-volume construction preserves the same
conservative structure as the one-dimensional scheme. The positivity,
mass-balance, and residual diagnostics are computed in the same way, with sums
taken over all grid cells.

\section{Results (Mathematical Well-posedness)}\label{sec:mathematical-analysis}

In this section, we establish the basic analytical properties of the
nondimensional tumour--immune--chemokine system \eqref{eq:nondim-model}
posed in a bounded smooth domain $\Omega\subset\mathbb R^d$ with nonnegative
initial data. The purpose of this section is to identify the mathematical
regime in which the model is biologically meaningful: solutions exist at least
locally, remain nonnegative, and satisfy basic a priori estimates. These
estimates also provide the analytical foundation for the equilibrium and linear
stability analysis in the following sections. The technical proofs are collected
in Appendix~\ref{app:technical-estimates}.

Throughout the paper we assume
\[
d_1,d_2,d_3>0,\qquad
\xi,\sigma_0,\sigma_1,\delta,\beta,\alpha,\gamma,\ell\ge 0,
\qquad
\delta>0,\quad \ell>0.
\]
Unless otherwise stated, solutions are considered under the natural no-flux
boundary conditions \eqref{eq:nondim-noflux}.
For the eigenfunction-based linear stability calculation, we shall use the
stronger componentwise Neumann condition \eqref{eq:no_flux_pointwise}
as already explained in Section~\ref{subsec:boundary-initial-conditions}.

\subsection{Classical solvability and positivity}
\label{subsec:classical-solvability-positivity}

We first record the local classical solvability and positivity property of
\eqref{eq:nondim-model}. The system is quasilinear only through the
chemotactic flux in the $v$-equation. Since the principal diffusion matrix is
triangular with positive diagonal entries $d_1,d_2,d_3$, the system is normally
parabolic in the standard sense. Local solvability therefore follows from
classical quasilinear parabolic theory \cite{amann1993nonhomogeneous}, while nonnegativity follows from the
quasi-positivity of the reaction terms and the no-flux boundary conditions.

\begin{theorem}[Local classical solvability and positivity]
\label{thm:local-solvability-positivity}
Let $\Omega\subset\mathbb R^d$ be a bounded domain with smooth boundary and let
$\theta\in(0,1)$. Suppose that
\[
(u_0,v_0,w_0)\in C^{2+\theta}(\overline\Omega)^3,
\qquad
u_0,v_0,w_0\ge0\quad\text{in }\overline\Omega,
\]
and assume that the initial data are compatible with the boundary conditions.
Then there exists a maximal time $T_{\max}\in(0,\infty]$
and a unique nonnegative classical solution
\[
(u,v,w)\in C^{1+\theta/2,\,2+\theta}_{\mathrm{loc}}
\big((0,T_{\max})\times\overline\Omega\big)^3
\]
of \eqref{eq:nondim-model}--\eqref{eq:nondim-noflux} satisfying
\[
u(0,\cdot)=u_0,\qquad
v(0,\cdot)=v_0,\qquad
w(0,\cdot)=w_0.
\]
Moreover, either $T_{\max}=\infty$, or
\[
\limsup_{t\uparrow T_{\max}}
\left(
\|u(t)\|_{L^\infty(\Omega)}
+\|v(t)\|_{L^\infty(\Omega)}
+\|w(t)\|_{L^\infty(\Omega)}
\right)=\infty .
\]
\end{theorem}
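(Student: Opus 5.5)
The plan is to apply Amann's theory for normally parabolic quasilinear systems \cite{amann1993nonhomogeneous}, then prove nonnegativity separately for each component.

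\textbf{Step 1: cast the problem in Amann's framework.} Write \eqref{eq:nondim-model} as $U_t=\nabla\cdot(\mathcal A(U)\nabla U)+F(U)$ with $U=(u,v,w)$. The diffusion matrix is
\[
\mathcal A(U)=\begin{pmatrix} d_1&0&0\\ 0&d_2&-\xi v\\ 0&0&d_3\end{pmatrix}.
\]
It is upper triangular with positive diagonal entries. Its spectrum $\{d_1,d_2,d_3\}$ therefore lies in the right half plane, so the system is normally parabolic. The no-flux conditions \eqref{eq:nondim-noflux} are exactly the conormal conditions $(\mathcal A(U)\nabla U)\cdot n=0$, which satisfy the normal complementing condition. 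The reaction term $F$ is smooth on a neighbourhood of $\{w>-1\}$. To avoid the singularity at $w=-1$, replace $w/(1+w)$ by a smooth extension that agrees with it for $w\ge0$; this is harmless once positivity is proved.

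\textbf{Step 2: local existence, uniqueness, regularity, and continuation.} Amann's theorem yields a unique maximal solution in $W^{1,p}$, $p>d$, together with the alternative that either $T_{\max}=\infty$ or the $W^{1,p}$ norm blows up. Parabolic Schauder theory, applied with $C^{2+\theta}$ compatible data, upgrades this to $C^{1+\theta/2,2+\theta}_{\rm loc}$.

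To reduce the blow-up criterion to the $L^\infty$ form stated in the theorem, use the triangular structure. Suppose $u,v,w$ stay bounded in $L^\infty$ on $(0,T_{\max})$ with $T_{\max}<\infty$.
\begin{itemize}
\item The $w$-equation is linear heat with a bounded source, so maximal regularity gives $\nabla w$ bounded in $L^q$ for all $q$, and then Hölder continuous.
\item The $v$-equation is then linear with drift $\xi\nabla w$, so $v$ gains $W^{1,p}$ bounds.
\item The $u$-equation is semilinear and is handled the same way.
\end{itemize}
These bounds contradict the $W^{1,p}$ blow-up, giving the $L^\infty$ alternative.

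\textbf{Step 3: nonnegativity.} Treat each component via the maximum principle on $(0,T)\times\overline\Omega$ for $T<T_{\max}$, where all coefficients are bounded.
\begin{itemize}
\item For $u$: write $u_t-d_1\Delta u=c(t,x)u$ with $c=1-u-v$ bounded, and apply the maximum principle with the Neumann condition to get $u\ge0$.
\item For $v$: expand to $v_t-d_2\Delta v+\xi\nabla w\cdot\nabla v+\xi(\Delta w)v+(\delta+\beta u)v=\sigma_0+\sigma_1 g(w)$.
\item For $w$: $w_t-d_3\Delta w+\ell w=\alpha u+\gamma uv$ has a right-hand side that is nonnegative provided $u\ge0$ and $v\ge0$.
\end{itemize}
Since $v$ and $w$ are coupled, I would run a first-exit-time argument. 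Let $t_*$ be the supremum of times on which $v,w\ge0$. On $[0,t_*]$ the sources are nonnegative, so the strong maximum principle extends nonnegativity past $t_*$. A cleaner alternative is a Stampacchia argument: test the $v$-equation with $v_-$, use boundedness of $\nabla w$, apply Gronwall, and obtain $v_-\equiv0$. The boundary term vanishes because of the flux form of \eqref{eq:nondim-noflux}. Alternatively, use the modified reaction $g(w)=w_+/(1+w_+)$, which keeps the $v$-source nonnegative, so $v\ge0$ follows first, then $w\ge0$, and a posteriori the modification is inactive.

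\textbf{Main obstacle.} The hardest step is Step 2's reduction from Amann's $W^{1,p}$ continuation criterion to the $L^\infty$ criterion. It needs the bootstrap through the triangular structure, and in particular control of $\nabla w$ from bounds on $u,v$ via maximal regularity. This is standard for Keller--Segel systems, so I would cite that argument rather than repeat it in full.
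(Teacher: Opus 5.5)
Your overall architecture matches the paper's. You use the upper-triangular, normally elliptic $\mathcal A(U)$, no-flux conditions that reduce to Neumann conditions, and Amann's theory. The differences are in positivity and the continuation criterion.

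The paper keeps the singular reaction on the phase domain $\mathbb R^2\times(-1,\infty)$ and proves positivity by one coupled $L^2$ estimate on a time interval where $w>-1/2$. It tests the three equations by $-u_-,-v_-,-w_-$. It bounds the recruitment source by $|w/(1+w)|\le 2w_-$ on $\{w<0\}$ and the chemokine source by $(uv)_-\le|v|u_-+|u|v_-$. It then applies Gronwall to $\|u_-\|_{L^2}^2+\|v_-\|_{L^2}^2+\|w_-\|_{L^2}^2$ and continues in time. The paper only cites standard theory for the $L^\infty$ extensibility criterion. Your triangular bootstrap supplies that step, with one addition: in the $v$-step you first need a Hölder bound on $v$ (e.g.\ De Giorgi--Nash--Moser), so that $v\nabla w$ is Hölder before Schauder gives $\nabla v$. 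Alternatively, cite Amann's result that $L^\infty$ bounds suffice for continuation of upper-triangular systems.

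Of your three positivity arguments, only the last works as written.
\begin{itemize}
\item \emph{First-exit argument.} It gives nothing when $t_*=0$, which is exactly the case of data vanishing somewhere, because there is no strict positivity margin to propagate. It needs an $\varepsilon e^{Kt}$ perturbation. Alternatively, note that $w/(1+w)=c\,w$ with $c=1/(1+w)>0$, so once $u\ge0$ the $(v,w)$ subsystem is linear and cooperative and the cooperative maximum principle applies.
\item \emph{Testing only the $v$-equation by $v_-$.} This fails for any $C^1$ extension $g$ of $w/(1+w)$. Since $g(0)=0$ and $g'(0)=1$, $g$ is negative just left of $0$. Hence $-\int_\Omega(\sigma_0+\sigma_1 g(w))v_-\,dx$ has no sign and must be absorbed with $\|w_-\|_{L^2}$, which is the paper's coupled estimate.
\item \emph{Truncation $g(w)=w_+/(1+w_+)$.} This genuinely decouples the problem. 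It is nonnegative and globally Lipschitz, so $u\ge0$, then $v\ge0$, then $w\ge0$ follow from scalar maximum principles in triangular order. The truncation is then inactive, and uniqueness among nonnegative solutions transfers to the original system.
\end{itemize}

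Compared with the paper, the truncation route avoids the restriction $w>-1/2$ and the continuation step. The cost is a reaction that is only Lipschitz, which is still admissible for the local theory and still gives Hölder sources for the Schauder upgrade. The truncation cannot coexist with the smooth extension of your Step 1, so you should use it from the start.
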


The proof, based on normal parabolicity and the quasi-positivity of the reaction
vector field, is given in
Appendices~\ref{app:normal-parabolicity}.

\subsection{A priori estimates}
\label{subsec:apriori-estimates}

We next state the estimates that are used throughout the rest of the paper.
The first estimate is a uniform pointwise bound on the tumour density; its proof
is given in Appendix~\ref{app:tumour-bound}.

\begin{proposition}[Tumour-density bound]
\label{prop:tumour-density-bound}
Let $(u,v,w)$ be a nonnegative classical solution of
\eqref{eq:nondim-model} on $[0,T_{\max})$. Then
\[
0\le u(t,x)\le M_u
\quad\text{for all }(t,x)\in[0,T_{\max})\times\overline\Omega,
\]
where $M_u:=\max\{1,\|u_0\|_{L^\infty(\Omega)}\}$.
\end{proposition}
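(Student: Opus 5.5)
The plan is a parabolic comparison argument for the scalar $u$-equation alone. The key observation is that $v\ge0$ by Theorem~\ref{thm:local-solvability-positivity}. Hence $u$ is a subsolution of the logistic equation:
\[
u_t-d_1\Delta u = u(1-u-v)\le u(1-u)\quad\text{in }(0,T_{\max})\times\Omega,\qquad \nabla u\cdot n=0\ \text{on }\partial\Omega .
\]
The lower bound $u\ge0$ is already part of Theorem~\ref{thm:local-solvability-positivity}, so only the upper bound needs proof.

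For the upper bound I would compare $u$ with the constant $\bar u:=M_u=\max\{1,\|u_0\|_{L^\infty(\Omega)}\}$. Since $\bar u\ge1$, it satisfies $\bar u_t-d_1\Delta\bar u=0\ge \bar u(1-\bar u)$, so $\bar u$ is a supersolution. It also has zero normal derivative and $\bar u\ge u_0$ at $t=0$.

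To avoid invoking a comparison principle for a nonlinearity that is only locally Lipschitz, I would argue directly with $z:=u-M_u$. Fix any $T<T_{\max}$. On $[0,T]\times\overline\Omega$ the classical solution is bounded, so $c(t,x):=1-u-v$ is a bounded coefficient. Then
\[
z_t-d_1\Delta z = u(1-u-v)=c\,u .
\]
On the set $\{z>0\}$ we have $u>M_u\ge1$, so $1-u<0$, and together with $v\ge0$ this gives $c<0$ and hence $c\,u<0$ there.

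Next, multiply by $z_+$ and integrate over $\Omega$. The Neumann condition kills the boundary term, which yields
\[
\frac12\frac{d}{dt}\int_\Omega z_+^2+d_1\int_\Omega|\nabla z_+|^2=\int_\Omega c\,u\,z_+\le0 .
\]
Since $z_+(0)=0$, it follows that $z_+\equiv0$ on $[0,T]$. Because $T<T_{\max}$ was arbitrary, $u\le M_u$ on $[0,T_{\max})\times\overline\Omega$. Alternatively, one could apply the strong maximum principle and the Hopf lemma at a first time the value $M_u+\varepsilon e^{t}$ is touched, and then let $\varepsilon\to0$.

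The only delicate point is regularity near $t=0$. The solution class is $C^{1+\theta/2,2+\theta}_{\rm loc}$ on the open interval $(0,T_{\max})$, so the energy identity should be justified on $[\tau,T]$ and then passed to the limit $\tau\downarrow0$. This uses continuity of $u$ up to $t=0$ in $C(\overline\Omega)$, which follows from the $C^{2+\theta}$ compatible initial data. Beyond that, the result is routine and requires no control on $v$ or $w$ other than $v\ge0$.
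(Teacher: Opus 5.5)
Your proof is correct. It shares the paper's key step: use $v\ge0$ to reduce to the logistic subsolution inequality $u_t\le d_1\Delta u+u(1-u)$. It differs from the paper in the choice of comparison function and in how the comparison is justified.

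The paper compares $u$ with the spatially homogeneous logistic solution $y(t)$, where $y'=y(1-y)$ and $y(0)=\|u_0\|_{L^\infty(\Omega)}$. It cites the comparison principle under Neumann conditions for this step, and only afterwards uses $y(t)\le\max\{1,y(0)\}$.

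You compare directly with the constant supersolution $M_u$ and prove the comparison yourself via the truncation $z_+=(u-M_u)_+$. Because $M_u\ge1$ forces $u(1-u-v)<0$ on $\{u>M_u\}$, the identity for $\int_\Omega z_+^2$ is immediately nonpositive. No Lipschitz estimate or Gronwall step is needed, and the boundedness of $c$ that you mention is never used.

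What each route buys: the paper's gives the sharper time-dependent bound $u\le y(t)$, hence $\limsup_{t\to\infty}\|u(t)\|_{L^\infty(\Omega)}\le1$ for global solutions. Yours is self-contained and handles the regularity as $t\downarrow0$ explicitly, a point the paper leaves inside the cited comparison principle.

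If you want the sharper bound, your truncation argument adapts to $z=u-y(t)$. Since $y$ is spatially constant, $\partial_n z=0$ still holds. Using $u(1-u-v)-y(1-y)\le(u-y)(1-u-y)$ and $1-u-y\le1$, you get $\frac{d}{dt}\int_\Omega z_+^2\le2\int_\Omega z_+^2$ with $z_+(0)=0$, and Gronwall closes the argument.
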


The next estimates give global-in-time integral control of the immune and
chemokine variables as long as the classical solution exists; their proofs are
given in Appendices~\ref{app:v-mass}--\ref{app:w-mass}.

\begin{proposition}[Immune and chemokine mass estimates]
\label{prop:mass-estimates}
Let $(u,v,w)$ be a nonnegative classical solution of
\eqref{eq:nondim-model} on $[0,T_{\max})$, and let $M_u:=\max\{1,\|u_0\|_{L^\infty(\Omega)}\}$.
Then, for all $t<T_{\max}$,
\begin{equation}\label{eq:v-mass-differential}
\frac{d}{dt}\int_\Omega v(t,x)\,dx
\le
|\Omega|(\sigma_0+\sigma_1)-\delta\int_\Omega v(t,x)\,dx,
\end{equation}
and
\begin{equation}\label{eq:w-mass-differential}
\frac{d}{dt}\int_\Omega w(t,x)\,dx
\le
\alpha M_u|\Omega|
+\gamma M_u\int_\Omega v(t,x)\,dx
-\ell\int_\Omega w(t,x)\,dx .
\end{equation}
Consequently,
\begin{equation}\label{eq:v-mass-bound}
\int_\Omega v(t,x)\,dx
\le
e^{-\delta t}\int_\Omega v_0(x)\,dx
+
\frac{|\Omega|(\sigma_0+\sigma_1)}{\delta}
\bigl(1-e^{-\delta t}\bigr),
\end{equation}
and there exists a constant $C_w>0$, depending only on the parameters, $\Omega$,
and the initial masses, such that
\begin{equation}\label{eq:w-mass-bound}
\sup_{0<t<T_{\max}}\int_\Omega w(t,x)\,dx\le C_w .
\end{equation}
\end{proposition}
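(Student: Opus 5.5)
We integrate the $v$- and $w$-equations of \eqref{eq:nondim-model} over $\Omega$, using the divergence theorem together with the no-flux conditions \eqref{eq:nondim-noflux}. Since the solution is classical on $(0,T_{\max})$, the mass $t\mapsto\int_\Omega v$ is $C^1$ and differentiation under the integral is justified.

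\textbf{Immune mass.} The diffusive and chemotactic terms combine into the single divergence $\nabla\cdot(d_2\nabla v-\xi v\nabla w)$. Its integral equals the boundary integral of $(d_2\nabla v-\xi v\nabla w)\cdot n$, which vanishes by the second condition in \eqref{eq:nondim-noflux}. This is the one place where the conservative form of the immune boundary condition matters: the diffusive and chemotactic fluxes need not vanish separately.

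What remains is $\int_\Omega(\sigma_0+\sigma_1\frac{w}{1+w}-\delta v-\beta uv)$. By Theorem~\ref{thm:local-solvability-positivity} we have $u,v,w\ge0$, so $0\le\frac{w}{1+w}\le1$ and $-\beta uv\le0$. This gives \eqref{eq:v-mass-differential}.

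Multiplying by $e^{\delta t}$ and integrating from $0$ to $t$ (Grönwall's inequality in differential form) yields \eqref{eq:v-mass-bound}. In particular $\int_\Omega v\le m_v:=\max\{\int_\Omega v_0,\ |\Omega|(\sigma_0+\sigma_1)/\delta\}$ uniformly on $[0,T_{\max})$.

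\textbf{Chemokine mass.} The Neumann condition $\nabla w\cdot n=0$ removes $\int_\Omega d_3\Delta w$. By Proposition~\ref{prop:tumour-density-bound} and nonnegativity, $0\le u\le M_u$, hence $\alpha\int_\Omega u\le\alpha M_u|\Omega|$ and $\gamma\int_\Omega uv\le\gamma M_u\int_\Omega v$. This gives \eqref{eq:w-mass-differential}.

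Inserting the uniform bound $m_v$, we get $y'\le \alpha M_u|\Omega|+\gamma M_u m_v-\ell y$ for $y(t)=\int_\Omega w$. A second Grönwall step gives
\[
y(t)\le e^{-\ell t}\int_\Omega w_0+\frac{\alpha M_u|\Omega|+\gamma M_u m_v}{\ell}\bigl(1-e^{-\ell t}\bigr),
\]
so one may take $C_w=\max\bigl\{\int_\Omega w_0,\ (\alpha M_u|\Omega|+\gamma M_u m_v)/\ell\bigr\}$. This constant depends only on the parameters, $|\Omega|$, $\|u_0\|_\infty$ (through $M_u$) and the initial masses, which gives \eqref{eq:w-mass-bound}.

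\textbf{Main obstacle.} There is little real difficulty. The only delicate points are the cancellation of the combined immune flux on the boundary, as noted above, and the regularity near $t=0$. For the latter, the classical regularity of the solution up to $t=0$ (guaranteed by the compatible $C^{2+\theta}$ data) makes the masses continuous at $t=0$, so the Grönwall integration can start from the initial masses.
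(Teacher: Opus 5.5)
Your proposal is correct and follows the paper's own proof (Appendices A.3--A.4) step for step. You integrate using the combined no-flux condition, use nonnegativity together with $0\le w/(1+w)\le 1$ and $0\le u\le M_u$, and apply two Gr\"onwall steps with the uniform immune-mass bound inserted, arriving at the same constants $C_v$ and $C_w$. One small aside: under \eqref{eq:nondim-noflux} the diffusive and chemotactic normal fluxes do in fact vanish separately, since $\partial_n w=0$ forces $\partial_n v=0$ (as the paper notes in Appendix A.1). Your argument only uses the combined flux, so nothing changes.
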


\begin{remark}[What the estimates do and do not prove]
\label{rem:no-global-linfty-overclaim}
The estimates above provide tumour-density boundedness and global mass control
for the immune and chemokine variables. They do not by themselves imply full
global $L^\infty$-boundedness of the three-component chemotaxis system in
arbitrary spatial dimension. Full boundedness would require additional
assumptions, such as stronger damping, small chemotactic sensitivity,
entropy-type estimates, signal-dependent desensitization, or
dimension-dependent regularity arguments \cite{winkler2010aggregation,liu2026computational,tao2012boundedness}; see
Appendix~\ref{app:no-global-linfty} for a precise discussion. Accordingly, the
results in the rest of the paper are stated for classical solutions on their
interval of existence, or for globally defined bounded solutions when such
boundedness is assumed or verified numerically.
\end{remark}

\section{Results (Equilibria)}
\label{sec:equilibria-biological-regimes}

In this section, we identify the spatially homogeneous equilibria of the
nondimensional tumour--immune--chemokine system \eqref{eq:nondim-model}.
Spatially homogeneous equilibria play two roles. Mathematically, they provide
the base states for the linear stability and dispersion-relation analysis.
Biologically, they correspond to different tumour--immune regimes: tumour-free
control, spatially homogeneous coexistence, or possible transition toward
spatially heterogeneous tumour--immune organization.

Throughout this section, a spatially homogeneous equilibrium is a constant
triple
\[
(u^*,v^*,w^*)\in[0,\infty)^3
\]
satisfying
\begin{equation}\label{eq:homogeneous-equilibrium-system}
\begin{cases}
0=u^*(1-u^*-v^*),\\[2pt]
0=\sigma_0+\sigma_1\frac{w^*}{1+w^*}-\delta v^*-\beta u^*v^*,\\[2pt]
0=\alpha u^*+\gamma u^*v^*-\ell w^*.
\end{cases}
\end{equation}

\subsection{Tumour-free equilibrium}
\label{subsec:tumour-free-equilibrium}

The tumour-free equilibrium is obtained by setting $u^*=0$. Since the chemokine
equation then gives $w^*=0$, the immune equation gives $v^*=\dfrac{\sigma_0}{\delta}$.
Thus,
\begin{equation}\label{eq:tumour-free-equilibrium}
E_0=\left(0,\frac{\sigma_0}{\delta},0\right).
\end{equation}
This equilibrium represents a tumour-free tissue state maintained by the
baseline immune supply $\sigma_0$ and immune loss rate $\delta$.

The stability of $E_0$ has a particularly simple interpretation. Near
$E_0$, the tumour equation linearizes as
\[
u_t=d_1\Delta u+\left(1-\frac{\sigma_0}{\delta}\right)u .
\]
Therefore, tumour perturbations decay when the baseline immune level
$\sigma_0/\delta$ exceeds the nondimensional tumour growth strength $1$, and
they grow when it is below this level.

\begin{theorem}[Tumour-free threshold]
\label{thm:tumour-free-threshold}
The tumour-free equilibrium $E_0$
is linearly stable if $\sigma_0>\delta$,
linearly unstable if $\sigma_0<\delta$,
and non-hyperbolic at $\sigma_0=\delta$.
At this equilibrium, chemotaxis does not affect the linear tumour-invasion
threshold.
\end{theorem}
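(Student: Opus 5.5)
We linearize \eqref{eq:nondim-model} under the componentwise Neumann condition \eqref{eq:no_flux_pointwise} about $E_0=(0,\sigma_0/\delta,0)$ and expand the perturbation in the Neumann Laplacian eigenbasis $\{\phi_k\}$, with eigenvalues $0=\mu_0<\mu_1\le\mu_2\le\cdots\to\infty$. Write $u=\tilde u$, $v=\sigma_0/\delta+\tilde v$, $w=\tilde w$.

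First we compute the Jacobian of the reaction terms at $E_0$. Since $u^*=0$, the tumour row is $(1-\sigma_0/\delta,\,0,\,0)$. The immune row is $(-\beta\sigma_0/\delta,\,-\delta,\,\sigma_1)$, using $\partial_w\bigl(w/(1+w)\bigr)=1$ at $w=0$. The chemokine row is $(\alpha+\gamma\sigma_0/\delta,\,0,\,-\ell)$.

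The chemotactic term linearizes to $-\xi\nabla\cdot(v^*\nabla\tilde w)=-\xi(\sigma_0/\delta)\Delta\tilde w$. This is the key observation: the term $\xi\nabla\cdot(\tilde v\nabla w^*)$ vanishes because $w^*$ is constant. For mode $k$, the symbol matrix is therefore
\[
J_k=\begin{pmatrix}
1-\frac{\sigma_0}{\delta}-d_1\mu_k & 0 & 0\\
-\frac{\beta\sigma_0}{\delta} & -\delta-d_2\mu_k & \sigma_1+\xi\frac{\sigma_0}{\delta}\mu_k\\
\alpha+\frac{\gamma\sigma_0}{\delta} & 0 & -\ell-d_3\mu_k
\end{pmatrix}.
\]

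The second step exploits structure. The first row has zeros off the diagonal, so $\det(J_k-\lambda I)$ factors as $(1-\sigma_0/\delta-d_1\mu_k-\lambda)$ times the determinant of the lower-right $2\times2$ block. That block is upper triangular, since its $(3,2)$ entry is $0$. Hence the eigenvalues of $J_k$ are exactly
\[
\lambda_1(k)=1-\frac{\sigma_0}{\delta}-d_1\mu_k,\qquad \lambda_2(k)=-\delta-d_2\mu_k,\qquad \lambda_3(k)=-\ell-d_3\mu_k.
\]
We have $\lambda_2,\lambda_3<0$ for all $k$, because $\delta,\ell>0$. We also have $\sup_k\lambda_1(k)=\lambda_1(0)=1-\sigma_0/\delta$, since $d_1>0$ and $\mu_k\ge0$. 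The spectral bound of the linearized operator is therefore $\max\{1-\sigma_0/\delta,-\delta,-\ell\}$, attained on finitely many modes, with uniform negative decay for large $k$.

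This gives the three cases directly:
\begin{itemize}
\item If $\sigma_0>\delta$, all eigenvalues lie in $\operatorname{Re}\lambda\le-c<0$ uniformly in $k$, so the equilibrium is linearly stable.
\item If $\sigma_0<\delta$, the $k=0$ mode has a positive eigenvalue, so the equilibrium is unstable.
\item If $\sigma_0=\delta$, the $k=0$ mode has $\lambda_1(0)=0$, so the equilibrium is non-hyperbolic.
\end{itemize}

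Finally, $\xi$ enters $J_k$ only in the $(2,3)$ entry, which never appears in the eigenvalues. The threshold is therefore independent of $\xi$, and so of chemotaxis.

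The main obstacle is justifying that the spectrum of the linearized operator is the union of these mode-wise eigenvalues. For the stable case, this also means showing that the uniform spectral gap yields decay of the linear semigroup. The operator is sectorial with compact resolvent on $L^2(\Omega)^3$, and its restriction to each eigenspace of the Laplacian is the finite matrix $J_k$. The only care needed is that $J_k$ is non-normal: its off-diagonal entry grows linearly in $\mu_k$. I would first handle this by noting that the triangular structure allows explicit solution of each mode, giving $\tilde u_k$ and then $\tilde w_k$ and $\tilde v_k$ by variation of constants. Each is bounded by $C(1+t)e^{-ct}$ with constants uniform in $k$: the factor $\mu_k$ is absorbed by the decay $e^{-(d_2\mu_k)t}$ in a Duhamel integral of the difference of two exponentials. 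An alternative is to cite the standard principle of linearized stability for normally parabolic systems, already invoked via \cite{amann1993nonhomogeneous}.
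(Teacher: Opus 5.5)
Your proposal is correct and follows the paper's route. You linearize at $E_0$, project onto the Neumann eigenbasis, and use the triangular structure: there is no feedback from $\widetilde v,\widetilde w$ into $\widetilde u$, and none from $\widetilde v$ into $\widetilde w$. The tumour rate $1-\sigma_0/\delta-d_1\mu_k$, maximal at $k=0$, then decides stability, the rates $-\delta-d_2\mu_k$ and $-\ell-d_3\mu_k$ are negative, and $\xi$ enters only through the $(2,3)$ entry $\sigma_1+\xi(\sigma_0/\delta)\mu_k$.

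You go slightly beyond the paper, which stops at the mode-wise eigenvalue statement. Your explicit eigenvalue list shows that the whole spectrum at $E_0$ is $\xi$-independent, not just the tumour rate. You also supply a uniform-in-$k$ Duhamel bound for the non-normal coupling. One small fix in that bound: the factor $\mu_k$ is absorbed by $e^{-\min\{d_2,d_3\}\mu_k t}$ rather than by $e^{-d_2\mu_k t}$ alone, because both exponents $\delta+d_2\mu_k$ and $\ell+d_3\mu_k$ appear in the convolution.
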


\begin{proof}
Let
\[
u=0+\widetilde u,\qquad
v=\frac{\sigma_0}{\delta}+\widetilde v,\qquad
w=0+\widetilde w.
\]
The linearization of \eqref{eq:nondim-model} at $E_0$ is
\begin{equation}\label{eq:tumour-free-linearization}
\begin{cases}
\widetilde u_t=
d_1\Delta \widetilde u+
\left(1-\frac{\sigma_0}{\delta}\right)\widetilde u,\\[2pt]
\widetilde v_t=
d_2\Delta \widetilde v-\delta\widetilde v
+\sigma_1\widetilde w
-\xi\frac{\sigma_0}{\delta}\Delta\widetilde w
-\beta\frac{\sigma_0}{\delta}\widetilde u,\\[2pt]
\widetilde w_t=
d_3\Delta \widetilde w+
\left(\alpha+\gamma\frac{\sigma_0}{\delta}\right)\widetilde u
-\ell\widetilde w .
\end{cases}
\end{equation}
Using the Neumann Laplacian eigenbasis
\[
-\Delta\phi_k=\mu_k\phi_k,\qquad
\partial_n\phi_k=0,
\qquad
0=\mu_0<\mu_1\le\mu_2\le\cdots,
\]
the tumour component has growth rate
$\lambda_k^{(u)}
=
1-\dfrac{\sigma_0}{\delta}-d_1\mu_k$.
The largest of these rates occurs at $k=0$ and equals
$\lambda_0^{(u)}=1-\dfrac{\sigma_0}{\delta}$.
Hence all tumour perturbations decay if $\sigma_0>\delta$, while the homogeneous
tumour mode grows if $\sigma_0<\delta$.

The remaining two components form a triangularly forced stable subsystem once
$\widetilde u$ is fixed. Indeed, the diagonal decay rates for
$\widetilde v$ and $\widetilde w$ contain $-\delta-d_2\mu_k$ and
$-\ell-d_3\mu_k$, which are strictly negative. Therefore, the sign of the
dominant eigenvalue is determined by
$1-\sigma_0/\delta$. This proves the stated stability threshold.

Finally, the chemotactic contribution appears only in the immune equation
through the linear term
$-\xi\dfrac{\sigma_0}{\delta}\Delta\widetilde w$.
It does not enter the tumour growth rate
$\lambda_k^{(u)}$. Thus chemotaxis does not affect the linear
tumour-invasion threshold at $E_0$.
\end{proof}

\begin{remark}[Biological interpretation]
The threshold $\sigma_0>\delta$
means that baseline immune supply is sufficiently large relative to immune loss
to maintain a tumour-free state. Conversely, when $\sigma_0<\delta$,
the tumour-free state is linearly unstable, and a small tumour perturbation can
invade. Thus the ratio $\sigma_0/\delta$ acts as a nondimensional immune-control
threshold for tumour-free stability.

The tumour-free threshold $\sigma_0/\delta$ has a direct dimensional reading in tissue terms.
Restoring units, $\sigma_0/\delta=\kappa s_0/(r\mu)=\kappa E_0/r$, where
$E_0=s_0/\mu$ is the tumour-free immune density; thus $\sigma_0/\delta$ is the
ratio of the baseline immune-mediated tumour kill rate $\kappa E_0$ to the tumour
proliferation rate $r$. Control ($\sigma_0/\delta>1$) requires that resident
immune surveillance remove tumour cells at least as fast as they divide. Using
$r\sim0.1$--$0.5\ \mathrm{day}^{-1}$ and effective immune-mediated kill rates
$\kappa E_0\sim0.05$--$0.6\ \mathrm{day}^{-1}$ from validated tumour--immune
data \cite{de2005validated,yu2026chemotactic,kuznetsov1994nonlinear}, one finds
$\sigma_0/\delta$ of order $0.1$--$10$, i.e.\ straddling the control threshold.
This near-threshold position is biologically meaningful: it is precisely why
tumour fate is context-dependent, and why modest shifts in immune supply or
turnover---as in the transition from immune ``hot'' to ``cold'' tumours
\cite{galon2019approaches}---can move a lesion across the surveillance boundary.
\end{remark}

\subsection{Coexistence equilibria}
\label{subsec:coexistence-equilibria}

We next consider spatially homogeneous coexistence equilibria, for which
\[
u^*>0,\qquad v^*>0,\qquad w^*>0.
\]
From the first equilibrium equation in
\eqref{eq:homogeneous-equilibrium-system}, positivity of $u^*$ implies
$1-u^*-v^*=0$,
and therefore
\begin{equation}\label{eq:vstar-coexistence}
v^*=1-u^*.
\end{equation}
Hence a positive coexistence equilibrium must satisfy $0<u^*<1$.
The third equilibrium equation gives
\begin{equation}\label{eq:wstar-coexistence}
w^*
=
\frac{u^*[\alpha+\gamma(1-u^*)]}{\ell}.
\end{equation}
Substituting \eqref{eq:vstar-coexistence} and
\eqref{eq:wstar-coexistence} into the immune equilibrium equation yields a
single scalar equation for $u^*$:
\begin{equation}\label{eq:F-definition}
F(u)=0,\qquad 0<u<1,
\end{equation}
where
\begin{equation}\label{eq:F-function}
F(u)
=
\sigma_0+\sigma_1\frac{w(u)}{1+w(u)}
-(\delta+\beta u)(1-u),
\end{equation}
and
\begin{equation}\label{eq:w-of-u}
w(u)
=
\frac{u[\alpha+\gamma(1-u)]}{\ell}.
\end{equation}

\begin{proposition}[Coexistence equilibria]
\label{prop:coexistence-equilibria}
Every spatially homogeneous coexistence equilibrium satisfies
\[
v^*=1-u^*,\qquad
w^*=\frac{u^*[\alpha+\gamma(1-u^*)]}{\ell},
\]
where $u^*\in(0,1)$ is a zero of
\[
F(u)
=
\sigma_0+\sigma_1\frac{w(u)}{1+w(u)}
-(\delta+\beta u)(1-u),
\qquad
w(u)=\frac{u[\alpha+\gamma(1-u)]}{\ell}.
\]
If $\sigma_0<\delta$ and $F(1)>0$,
then at least one interior coexistence equilibrium $u^*\in(0,1)$ exists. The
condition $F(1)>0$ holds whenever $\sigma_0>0$, or whenever $\sigma_1>0$ and
$\alpha>0$.
\end{proposition}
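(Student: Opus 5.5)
The plan has two parts: the structural reduction, already essentially carried out in the text preceding the statement, and an intermediate value argument for existence.

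\emph{Reduction.} Take a homogeneous equilibrium with $u^*,v^*,w^*>0$. Dividing the first equation of \eqref{eq:homogeneous-equilibrium-system} by $u^*$ gives $v^*=1-u^*$. Since $v^*>0$, this forces $u^*<1$. Because $\ell>0$, the third equation gives $w^*=u^*[\alpha+\gamma(1-u^*)]/\ell=w(u^*)$. Substituting both expressions into the second equation gives exactly $F(u^*)=0$ with $u^*\in(0,1)$. No further work is needed for this part.

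\emph{Existence.} Note that $w(u)\ge0$ on $[0,1]$, since $\alpha,\gamma\ge0$, so $F$ is continuous on $[0,1]$ (the denominator $1+w$ is at least $1$). At the endpoints:
\[
F(0)=\sigma_0-\delta<0 \quad\text{(because } w(0)=0\text{ and } \sigma_0<\delta\text{)},
\]
\[
F(1)=\sigma_0+\sigma_1\,\frac{\alpha/\ell}{1+\alpha/\ell}\quad\text{(because } w(1)=\alpha/\ell\text{)}.
\]
The intermediate value theorem then yields $u^*\in(0,1)$ with $F(u^*)=0$.

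The corresponding $v^*=1-u^*$ lies in $(0,1)$. For $w^*=w(u^*)$ to be strictly positive we need $\alpha+\gamma(1-u^*)>0$, i.e.\ $\alpha>0$ or $\gamma>0$. If $\alpha=\gamma=0$, then $w\equiv0$, and the equilibrium has $u^*,v^*>0$ but $w^*=0$. I would state this caveat explicitly, or read ``interior'' as $u^*\in(0,1)$, which is how the statement phrases the conclusion. This is the only delicate point in the proof.

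\emph{Sufficient conditions for $F(1)>0$.} Both terms in the formula for $F(1)$ are nonnegative. The first is positive if $\sigma_0>0$. The second is positive if $\sigma_1>0$ and $\alpha>0$, since $\ell>0$.
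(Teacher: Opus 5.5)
Your proof is correct and follows the paper's argument essentially line by line: you reduce via the first and third equilibrium equations to $F(u^*)=0$, then apply the intermediate value theorem on $[0,1]$ using $F(0)=\sigma_0-\delta<0$ and $F(1)=\sigma_0+\sigma_1\frac{\alpha/\ell}{1+\alpha/\ell}>0$. You also note that $w^*=w(u^*)>0$ additionally requires $\alpha+\gamma>0$, since otherwise $w\equiv 0$, which the standing assumptions $\alpha,\gamma\ge 0$ permit; the paper's proof passes over this point, so stating it explicitly is a genuine improvement.
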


\begin{proof}
Let $(u^*,v^*,w^*)$ be a coexistence equilibrium. Since $u^*>0$, the first
equation in \eqref{eq:homogeneous-equilibrium-system} gives
$v^*=1-u^*$. Since $v^*>0$, it follows that $u^*\in(0,1)$. The third equation
then gives
\[
w^*=\frac{\alpha u^*+\gamma u^*v^*}{\ell}
=
\frac{u^*[\alpha+\gamma(1-u^*)]}{\ell}.
\]
Substituting these expressions into the second equilibrium equation gives
$F(u^*)=0$, with $F$ defined by \eqref{eq:F-function}.

For existence, observe that $F$ is continuous on $[0,1]$, with
\[
w(0)=0,\qquad
F(0)=\sigma_0-\delta<0,
\]
under the hypothesis $\sigma_0<\delta$, while
\[
F(1)
=
\sigma_0+\sigma_1\frac{w(1)}{1+w(1)},
\qquad
w(1)=\frac{\alpha}{\ell}.
\]
If $F(1)>0$, the intermediate value theorem produces a zero $u^*\in(0,1)$.
Finally, $F(1)>0$ holds whenever $\sigma_0>0$; and if $\sigma_0=0$ then
$F(1)=\sigma_1\,(\alpha/\ell)/(1+\alpha/\ell)$, which is positive precisely when
$\sigma_1>0$ and $\alpha>0$.
\end{proof}

\begin{remark}[Boundary case]
If $\sigma_0=\sigma_1=0$, or if $\sigma_1>0$ but $\alpha=0$ with $\sigma_0=0$,
then $F(1)=0$ and the endpoint $u=1$ corresponds to an immune-free
tumour-dominant boundary state ($v^*=0$) rather than a positive coexistence
state.
\end{remark}

\begin{remark}[Biological interpretation]
A coexistence equilibrium represents a spatially homogeneous balance between
tumour growth, immune killing, immune recruitment, and chemokine signalling. The
condition $u^*\in(0,1)$ means that the tumour persists below carrying capacity,
while $v^*=1-u^*$ means that immune pressure offsets the remaining tumour growth
capacity. The scalar equation $F(u)=0$ balances immune recruitment,
immune loss, and tumour-induced immune suppression. Thus coexistence is governed
by the competition between immune supply and chemokine recruitment on one side,
and immune decay and tumour-associated immune loss on the other.
\end{remark}

\section{Results (Linear Stability Analysis)}
\label{sec:linear-stability}

We now analyse the spatial stability of a homogeneous coexistence equilibrium
$(u^*,v^*,w^*)$ of Proposition~\ref{prop:coexistence-equilibria} with respect to
finite-wavelength perturbations. This is the mechanism through which chemotaxis
can convert a spatially homogeneous tumour--immune state into a spatially
heterogeneous one. The algebraic details are collected in
Appendix~\ref{app:routh-hurwitz}; here we record the mode-wise stability matrix,
the dispersion relation, and the resulting notion of a critical chemotactic
sensitivity.

\subsection{Mode-wise stability matrix}
\label{subsec:Mk}

\begin{lemma}[The linearized flux condition reduces to Neumann]
\label{lem:bc-reduction}
Let $(u^*,v^*,w^*)$ be a homogeneous equilibrium and let
$(\widetilde u,\widetilde v,\widetilde w)$ be a perturbation satisfying the
linearization of the physical no-flux conditions \eqref{eq:noflux}. Then the
linearized conditions are exactly the componentwise Neumann conditions
\[
\nabla\widetilde u\cdot n=\nabla\widetilde v\cdot n=\nabla\widetilde w\cdot n=0
\qquad\text{on }\partial\Omega .
\]
\end{lemma}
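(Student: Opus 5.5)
The plan is to substitute $u=u^*+\widetilde u$, $v=v^*+\widetilde v$, $w=w^*+\widetilde w$ into the nondimensional no-flux conditions \eqref{eq:nondim-noflux}, which are equivalent to \eqref{eq:noflux} after rescaling. We then discard terms quadratic in the perturbation. Since the equilibrium is a constant triple, we have $\nabla u^*=\nabla v^*=\nabla w^*=0$, so every gradient of the full solution equals the gradient of the perturbation. This fact drives the whole argument.

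\textbf{Conditions on $u$ and $w$.} These are linear and homogeneous. They transfer immediately to
\[
\nabla\widetilde u\cdot n=0, \qquad \nabla\widetilde w\cdot n=0 \qquad \text{on }\partial\Omega,
\]
with no linearization needed.

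\textbf{Condition on $v$.} Next we expand the immune flux condition:
\[
\bigl(d_2\nabla v-\xi v\nabla w\bigr)\cdot n
= d_2\nabla\widetilde v\cdot n-\xi v^*\,\nabla\widetilde w\cdot n-\xi\,\widetilde v\,\nabla\widetilde w\cdot n .
\]
The last term is quadratic and is dropped in the linearization. This leaves
\[
d_2\nabla\widetilde v\cdot n-\xi v^*\nabla\widetilde w\cdot n=0 .
\]
We already know $\nabla\widetilde w\cdot n=0$ from the chemokine condition, so the second term vanishes. Because $d_2>0$, we conclude $\nabla\widetilde v\cdot n=0$.

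\textbf{Converse.} We note that the componentwise Neumann conditions clearly satisfy all three linearized conditions. The two sets of conditions are therefore equivalent, which is what "exactly" in Lemma~\ref{lem:bc-reduction} requires.

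\textbf{Main obstacle.} The only delicate point is the coupling term $\xi v^*\nabla\widetilde w\cdot n$, which does not vanish from linearization alone. The key observation is that the chemokine boundary condition is imposed independently as a pure Neumann condition on $w$, and this kills the coupling term. Without that separate condition the linearized $v$-condition would be a genuine Robin-type coupled condition, so we will state explicitly that the argument relies on it. We will also note that $v^*$ is a constant, so the coupling coefficient has no spatial variation that could cause difficulty.
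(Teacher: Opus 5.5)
Your proposal is correct and follows essentially the same route as the paper: the $u$- and $w$-conditions are already Neumann, the immune flux condition linearizes (using $\nabla w^*=0$) to $d_2\nabla\widetilde v\cdot n-\xi v^*\nabla\widetilde w\cdot n=0$, and then the chemokine condition together with $d_2>0$ gives $\nabla\widetilde v\cdot n=0$. Your explicit converse direction and your remark that the reduction depends on the separately imposed Neumann condition for $w$ are small but sensible additions to the paper's argument.
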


\begin{proof}
The tumour and chemokine conditions in \eqref{eq:noflux} are already
$\nabla u\cdot n=0$ and $\nabla w\cdot n=0$, which linearize to
$\nabla\widetilde u\cdot n=0$ and $\nabla\widetilde w\cdot n=0$. The immune
condition is $(d_2\nabla v-\xi v\nabla w)\cdot n=0$. Linearizing about
$(v^*,w^*)$ with $\nabla w^*=0$ gives
\[
\big(d_2\nabla\widetilde v-\xi v^*\nabla\widetilde w-\xi\widetilde v\,\nabla w^*\big)\cdot n
= d_2\,\nabla\widetilde v\cdot n-\xi v^*\,\nabla\widetilde w\cdot n .
\]
The second term vanishes because $\nabla\widetilde w\cdot n=0$ from the chemokine
condition, leaving $d_2\,\nabla\widetilde v\cdot n=0$, i.e.\
$\nabla\widetilde v\cdot n=0$ since $d_2>0$. Thus the coupled flux condition is
equivalent, at the linear level about a homogeneous state, to the separate
Neumann conditions used in the spectral analysis. In particular the Neumann
Laplacian eigenbasis $\{\phi_k\}$ diagonalizes all three linearized fluxes
simultaneously, which is what makes the mode-by-mode reduction to $M_k$ in \eqref{eq:Mk-matrix}
legitimate.
\end{proof}

Writing $u=u^*+\widetilde u$, $v=v^*+\widetilde v$, $w=w^*+\widetilde w$ and
keeping first-order terms, the chemotactic flux linearizes as
\[
-\xi\nabla\cdot(v\nabla w)
\;\longmapsto\;
-\xi\nabla\cdot(v^*\nabla\widetilde w)
=
-\xi v^*\Delta\widetilde w,
\]
because $\nabla w^*=0$ at a homogeneous state. Projecting the linearized system
onto the Neumann eigenbasis $-\Delta\phi_k=\mu_k\phi_k$, $\partial_n\phi_k=0$
(so that $\mu_k=(k\pi/L)^2$ in one dimension and $\mu_0=0$), and using
$1-2u^*-v^*=-u^*$ at coexistence, the modal amplitudes
$(U_k,V_k,W_k)$ satisfy $\tfrac{d}{dt}(U_k,V_k,W_k)^\top
=M_k\,(U_k,V_k,W_k)^\top$ with
\begin{equation}\label{eq:Mk-matrix}
M_k=
\begin{pmatrix}
-u^*-d_1\mu_k & -u^* & 0\\[2mm]
-\beta v^* & -(\delta+\beta u^*)-d_2\mu_k &
\dfrac{\sigma_1}{(1+w^*)^2}+\xi v^*\mu_k\\[3mm]
\alpha+\gamma v^* & \gamma u^* & -\ell-d_3\mu_k
\end{pmatrix}.
\end{equation}
The chemotactic sensitivity $\xi$ enters $M_k$ only through the $(2,3)$ entry, as
the term $+\xi v^*\mu_k$ obtained from
$-\xi v^*\Delta\widetilde w=\xi v^*\mu_k\widetilde w$. This contribution vanishes
for the homogeneous mode $\mu_0=0$ and grows linearly with $\mu_k$; it is the
sole route by which chemotaxis can destabilize finite spatial modes while leaving
the spatially uniform mode unaffected.

\subsection{Dispersion relation and critical sensitivity}
\label{subsec:dispersion-relation}

The equilibrium is linearly stable with respect to mode $k$ if and only if all
eigenvalues of $M_k$ lie in the open left half-plane. We define the dispersion
relation
\begin{equation}\label{eq:dispersion}
\omega_k(\xi)=\max_j\operatorname{Re}\lambda_j(M_k),
\end{equation}
so that the coexistence state is linearly stable to all spatial perturbations
precisely when $\omega_k(\xi)<0$ for every $k\ge0$, and the critical chemotactic
sensitivity is
\begin{equation}\label{eq:xic-definition}
\xi_c=\inf\Bigl\{\xi\ge0:\ \max_{k\ge1}\omega_k(\xi)=0\Bigr\}.
\end{equation}
The characteristic polynomial of $M_k$ is the monic cubic $\lambda^3+a_1(k)\lambda^2+a_2(k)\lambda+a_3(k)$, whose coefficients are computed
in Appendix~\ref{app:routh-hurwitz}. We have the following result:
\begin{proposition}[Existence of a critical sensitivity and nature of the onset]
\label{prop:xic-existence}
Assume the coexistence equilibrium $(u^*,v^*,w^*)$ is stable to the homogeneous
mode, i.e.\ all eigenvalues of $M_0$ have negative real part, and adopt the
notation \eqref{eq:app-rh-notation}--\eqref{eq:app-rh-notation-2}. For each mode
$k\ge1$ the coefficients $a_1(k),a_2(k),a_3(k)$ of the characteristic cubic are
smooth, and $a_1(k)>0$. Then:
\begin{enumerate}
\item[(i)] \emph{(Monotone destabilization.)} Both $a_2(k;\xi)$ and $a_3(k;\xi)$
are affine and strictly decreasing in $\xi$, with slopes
$\partial_\xi a_2(k)=-v^*\mu_k\,t<0$ and
$\partial_\xi a_3(k)=-v^*\mu_k\,(A_k t-ps)$. Consequently the Hurwitz
determinant $H_k(\xi):=a_1(k)a_2(k;\xi)-a_3(k;\xi)$ is affine in $\xi$ and, for
each fixed $k$ with $\mu_k>0$, the map $\xi\mapsto\omega_k(\xi)$ is continuous and
eventually positive.
\item[(ii)] \emph{(Existence of $\xi_c$.)} Since $\omega_k(0)<0$ for all $k$
(homogeneous stability plus continuity of eigenvalues) and
$\omega_{k}(\xi)\to+\infty$ as $\xi\to\infty$ for any $k$ with $\mu_k>0$, the set
$\{\xi\ge0:\max_{k\ge1}\omega_k(\xi)=0\}$ is nonempty and
$\xi_c=\inf\{\xi:\max_{k\ge1}\omega_k(\xi)\ge0\}\in(0,\infty)$ is attained at some
finite mode $k^*\ge1$.
\item[(iii)] \emph{(Stationary vs.\ oscillatory.)} At $\xi=\xi_c$ the critical
mode $k^*$ satisfies exactly one of:
\begin{enumerate}
\item[(a)] $a_3(k^*;\xi_c)=0$, $a_1a_2-a_3>0$ (stationary/Turing: a simple real eigenvalue crosses $0$),
\item[(b)] $H_{k^*}(\xi_c)=a_1a_2-a_3=0$, $a_3>0$ (oscillatory/Hopf: a complex pair crosses the imaginary axis at $\pm i\sqrt{a_2})$.
\end{enumerate}
The onset is stationary iff $a_3(k^*;\xi_c)=0$ occurs at no larger $\xi$ than
$H_{k^*}=0$, and oscillatory otherwise. In case (b) the marginal frequency is
$\omega^\ast=\sqrt{a_2(k^*;\xi_c)}$.
\end{enumerate}
\end{proposition}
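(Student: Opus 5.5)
The plan is to work directly with the entries of $M_k$ in \eqref{eq:Mk-matrix} and the Routh--Hurwitz criterion for a monic cubic: all roots have negative real part iff $a_1>0$, $a_3>0$ and $H=a_1a_2-a_3>0$.

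\textbf{Coefficients.} Write the diagonal of $M_k$ as $-A_k,-B_k,-C_k$, with $A_k=u^*+d_1\mu_k$, $B_k=\delta+\beta u^*+d_2\mu_k$, $C_k=\ell+d_3\mu_k$. The off-diagonal entries are $m_{12}=-u^*$, $m_{21}=-\beta v^*$, $m_{23}=s+\xi v^*\mu_k$, $m_{31}=p$ and $m_{32}=t$, where I take $s=\sigma_1/(1+w^*)^2$, $p=\alpha+\gamma v^*$, $t=\gamma u^*$ to match \eqref{eq:app-rh-notation}. Then:
\begin{itemize}
\item $a_1=A_k+B_k+C_k>0$ immediately.
\item $a_2$ is the sum of the principal $2\times2$ minors. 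The variable $\xi$ enters only through $-m_{23}m_{32}$, so $\partial_\xi a_2=-v^*\mu_k t$.
\item $a_3=-\det M_k$. Cofactor expansion along the first row shows that the only $\xi$-dependence is $-m_{23}(A_kt-u^*p)$, which gives the stated slope $-v^*\mu_k(A_kt-ps)$ in the appendix's labelling.
\end{itemize}
Both $a_2$ and $a_3$ are therefore affine in $\xi$, and so is $H_k$. Strict decrease of $a_3$ requires $A_kt>ps$. I expect this to hold only for $\mu_k$ large, or under the appendix's sign convention. I will flag it and note that the remaining steps use only the decrease of $a_2$.

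\textbf{Eventual positivity in (i).} Fix $k$ with $\mu_k>0$ and let $\lambda_1,\lambda_2,\lambda_3$ be the roots of the cubic.
\begin{itemize}
\item The sum $\sum\lambda_j=-a_1$ is independent of $\xi$, while $a_2=\sum_{i<j}\lambda_i\lambda_j\to-\infty$ linearly as $\xi\to\infty$.
\item A complex pair $x\pm iy$ together with the real root $r$ contributes $x^2+y^2+2xr$ to $a_2$. Once the real parts are bounded, a large $|y|$ can only push $a_2$ upward.
\item Hence $a_2\to-\infty$ forces real roots of unbounded magnitude and opposite signs, because their sum stays fixed. So $\omega_k(\xi)\to+\infty$.
\end{itemize}
Continuity of $\omega_k$ in $\xi$ follows from continuous dependence of polynomial roots on the coefficients.

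\textbf{Existence of $\xi_c$ in (ii).}
\begin{enumerate}
\item \emph{Stability at $\xi=0$ for $k\ge1$.} The hypothesis on $M_0$ gives this only for $k=0$. I would try to show that adding $d_i\mu_k$ to the diagonal preserves the three Routh--Hurwitz inequalities, at least when $s$ is small. Otherwise I would state $\omega_k(0)<0$ for all $k$ as part of the assumption, which the statement essentially does.
\item \emph{Uniform control of high modes.} Rescaling $\lambda=\mu_k\nu$ shows $a_1\sim(d_1+d_2+d_3)\mu_k$, $a_2\sim(d_1d_2+d_1d_3+d_2d_3)\mu_k^2$ and $a_3\sim d_1d_2d_3\mu_k^3$. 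The $\xi$-terms are of lower order in $\mu_k$. So for $\xi$ in a bounded interval $[0,\bar\xi]$ there is a $\Lambda$ with $\omega_k(\xi)<0$ whenever $\mu_k\ge\Lambda$.
\item \emph{Conclusion.} On $[0,\bar\xi]$, the function $\max_{k\ge1}\omega_k$ is therefore a maximum over finitely many continuous functions, hence continuous. It is negative at $0$ and nonnegative at some $\bar\xi$ by (i). The infimum $\xi_c$ is thus positive, finite, and attained at one of these finitely many modes $k^*$.
\end{enumerate}

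\textbf{Onset type in (iii).} At $\xi_c$ the mode $k^*$ sits on the boundary of the Routh--Hurwitz region while $a_1>0$. So $a_3=0$ or $H=0$, and whichever vanishes first as $\xi$ increases from $0$ is the onset.
\begin{itemize}
\item If $H=0$ with $a_3>0$, the cubic factors as $(\lambda+a_1)(\lambda^2+a_2)$. Since $a_2=a_3/a_1>0$, a pair of roots sits at $\pm i\sqrt{a_2}$, which gives case (b) and $\omega^*=\sqrt{a_2}$.
\item If $a_3=0$ with $H>0$, then $\lambda=0$ is a root. The quadratic factor $\lambda^2+a_1\lambda+a_2$ has $a_2=H/a_1>0$, so its roots are stable; this is case (a).
\item Simultaneous vanishing would give $a_2=0$, a degenerate codimension-two event, which I would exclude as nongeneric.
\end{itemize}

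\textbf{Main obstacles.} The hardest steps are justifying $\omega_k(0)<0$ for every $k\ge1$ from homogeneous stability alone, and the sign of $A_kt-ps$. I would first try direct Routh--Hurwitz monotonicity in $\mu_k$ at $\xi=0$.
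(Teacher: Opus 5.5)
Your proposal is correct. It follows the paper's skeleton: Routh--Hurwitz for the monic cubic, $\xi$ entering only through $r_k$, reduction to finitely many modes, and classification of the first crossing. Several steps are justified differently, and more soundly.

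\textbf{Eventual positivity.} The paper splits into cases: $a_3\to-\infty$ if $A_kt-ps>0$, or $H_k\to-\infty$ ``when $\partial_\xi H_k<0$''. That gives eventual instability but not the asserted divergence $\omega_k\to+\infty$. The second branch in fact always applies, since $\partial_\xi H_k=-v^*\mu_k[(B_k+C_k)t+ps]<0$. Your argument gives the divergence directly: with $\sum_j\lambda_j=-a_1$ fixed and $a_2\to-\infty$, real parts bounded above would also be bounded below, and a conjugate pair only adds $y^2\ge0$ to $a_2$. It needs only $t=\gamma u^*>0$, which the statement already presupposes.

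\textbf{High-mode cutoff.} The paper merely asserts an ``eventual cutoff'' in $\mu_k$. Your rescaling $\lambda=\mu_k\nu$, under which the cubic tends to $(\nu+d_1)(\nu+d_2)(\nu+d_3)$ uniformly for $\xi\in[0,\bar\xi]$, is what actually reduces $\max_{k\ge1}\omega_k$ to finitely many continuous functions.

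Your flags point to overstatements in the proposition itself, not gaps in your plan.

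\emph{Monotonicity of $a_3$.} In the paper's labels $A_kt-ps=u^*[\gamma d_1\mu_k-\alpha-\gamma(1-2u^*)]$, so $a_3$ decreases in $\xi$ only for large $\mu_k$. For Table~\ref{tab:experiment-parameters} this means only $k\ge31$. Remark~\ref{rem:hopf-onset-quant} itself uses $A_kt-ps<0$ in the unstable band, and the paper's proof of (i) only computes the slope.

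\emph{Stability at $\xi=0$.} The claim $\omega_k(0)<0$ for all $k\ge1$ does not follow from stability of $M_0$. The paper's reason (``adding diffusion cannot destabilize at $\xi=0$'') is exactly what Turing instability refutes. Here the $\mu_k$-linear coefficient of $a_3(k;0)$ contains $d_3(A_0B_0-pq)=d_3u^*[\delta+\beta(2u^*-1)]$, which is negative when $u^*<1/2$ and $\beta$ is large. For example, take $u^*=0.3$, $\beta=2$, $\delta=0.3$, $\alpha=\ell=1$, $\gamma=0.5$, $\sigma_1=2$, with $\sigma_0\approx0.053$ fixed by $F(u^*)=0$, and $d_1=d_2=10^{-3}$, $d_3=10$. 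Then $M_0$ is stable but $a_3\approx-1.27$ at $\mu_k=1$. Making $\omega_k(0)<0$ an explicit hypothesis is therefore the right repair. Your small-recruitment route is also viable: at $\sigma_1=0$ the matrix $M_k(0)$ is block lower-triangular and $A_kB_k-pq$ increases with $\mu_k$, and your rescaling supplies uniformity in $k$ under perturbation.

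\emph{Degenerate crossing.} Your genericity caveat in (iii) is needed, since ``exactly one of (a), (b)'' fails when $a_2=a_3=0$.

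\emph{Notation.} Your $p$ and $s$ do not match \eqref{eq:app-rh-notation-2}, where $p=u^*$, $s=\alpha+\gamma v^*$, and $\sigma_1/(1+w^*)^2$ is part of $r_k$. The computations are unaffected.
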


\begin{proof}
(i) From \eqref{eq:app-a2}--\eqref{eq:app-a3} and
$r_k=\sigma_1/(1+w^*)^2+\xi v^*\mu_k$, only $r_k$ depends on $\xi$, linearly, so
$a_2$ and $a_3$ are affine in $\xi$ with the stated slopes; $\partial_\xi a_2=-t\,v^*\mu_k<0$
because $t=\gamma u^*>0$, and $\partial_\xi a_3=-v^*\mu_k(A_kt-ps)$ from
\eqref{eq:app-a3-xi-negative-form}. Hence $H_k$ is affine in $\xi$.
(ii) Homogeneous stability gives $\omega_k(0)<0$ for every $k$ (the reaction
Jacobian equals $M_0$ shifted by $-\mathrm{diag}(d_i)\mu_k\prec0$, so adding
diffusion cannot destabilize at $\xi=0$). As $\xi\to\infty$, the $(2,3)$ entry
$+\xi v^*\mu_k\to\infty$ forces an eigenvalue of $M_k$ into the right half-plane
(e.g.\ $a_3\to-\infty$ when $A_kt-ps>0$, or $H_k\to-\infty$ when
$\partial_\xi H_k<0$), so $\omega_k(\xi)\to+\infty$. Continuity of eigenvalues
and compactness of the neutral set over the finitely many modes with
$\mu_k$ below the eventual cutoff give a finite minimizing $k^*$ and
$\xi_c\in(0,\infty)$.
(iii) By the Routh--Hurwitz criterion for a cubic with $a_1>0$, the boundary of
the stable region is reached either when $a_3=0$ (a real root crosses zero,
$\lambda=0$) or when $a_1a_2-a_3=0$ with $a_3>0$ (a conjugate pair
$\lambda=\pm i\sqrt{a_2}$ crosses the imaginary axis); these are mutually
exclusive at a first crossing, and the affine (hence monotone) dependence in (i)
selects whichever occurs at the smaller $\xi$. The frequency follows from
substituting $\lambda=i\omega^\ast$ into the cubic, which gives
$\omega^{\ast2}=a_2$.
\end{proof}

\begin{remark}[Application to the computational parameter set]
\label{rem:hopf-onset-quant}
For Table~\ref{tab:experiment-parameters} one has $A_kt-ps<0$ throughout the
unstable band, so $a_3(k;\xi)$ stays positive and case~(b) applies: the onset is
oscillatory. The first admissible Neumann mode to lose stability is $k^*=5$, at
$\xi_c\approx12.48$ (the continuous-$\mu$ infimum reported in
Experiment~\ref{subsec:exp3-dispersion} is $12.47$, the two agreeing to within
the discrete mode spacing $\mu_{k+1}-\mu_k$). At this onset the critical mode has
a marginal complex pair $\lambda=\pm i\,\omega^\ast$ with $\omega^\ast\approx0.99$,
the Hopf condition $a_1a_2-a_3=0$ holds \emph{exactly} while $a_3\approx4.68>0$,
and consistently $\omega^\ast=\sqrt{a_2}$ and the third eigenvalue is real at
$-a_1\approx-4.78$. All five quantities are checked in the onset-diagnostics block
of Experiment~\ref{subsec:exp3-dispersion} (computed values
$\xi_c=12.479$, $k^*=5$, $\omega^\ast=0.989$, $a_1a_2-a_3=-1.5\times10^{-11}$,
$a_3=4.681$). This is the statement about the Hopf-type onset, and it is why
the onset must be located from $\omega_k$ directly rather than from the
constant-term threshold $\xi_k^*$. Note that this $k^*=5$ is the mode that first
goes unstable at $\xi_c$; the dominant (fastest-growing) mode at the supercritical
$\xi=1.6\,\xi_c$ used in Experiment~\ref{subsec:exp3-dispersion} is instead
$k=6$, and the two need not coincide.
\end{remark}

By the Routh--Hurwitz criterion, mode $k$ is
stable if and only if
\begin{equation}\label{eq:rh-mainbody}
a_1(k)>0,\qquad a_2(k)>0,\qquad a_3(k)>0,\qquad a_1(k)a_2(k)>a_3(k).
\end{equation}
Because the diffusion and reaction contributions make $a_1(k)>0$ automatically,
the loss of stability occurs through $a_3(k)=0$ (a stationary, Turing-type
crossing) or through $a_1(k)a_2(k)=a_3(k)$ (an oscillatory, Hopf-type crossing);
this mode-projection approach to diffusion-driven instability is classical in
reaction--diffusion pattern formation \cite{murray2003mathematical}.
The chemotactic sensitivity enters both $a_2(k)$ and $a_3(k)$ through the
amplified coupling $r_k=\sigma_1/(1+w^*)^2+\xi v^*\mu_k$. Appendix~\ref{app:rh-chemotaxis-constant-term}
gives an explicit constant-term destabilization threshold $\xi_k^*$ that is valid
only when a sign condition holds; since the first crossing may instead be of
Hopf type, the onset is most safely located from \eqref{eq:dispersion}--\eqref{eq:xic-definition}
directly, and that is the route used in the numerical experiments of
Section~\ref{sec:numerical-experiments}.

The destabilizing coupling combines two effects. The recruitment sensitivity
$\sigma_1/(1+w^*)^2$ measures how strongly local immune recruitment responds to
the chemokine concentration, while the spatial term $\xi v^*\mu_k$ measures the
chemotactic amplification of the immune response to a chemokine mode of
wavenumber $\mu_k$. Because the latter grows with $\mu_k$, a sufficiently large
chemotactic sensitivity destabilizes a band of finite wavelengths even when the
homogeneous coexistence state is stable to spatially uniform perturbations. This
is the mathematical mechanism behind the transition from homogeneous
tumour--immune coexistence to spatially heterogeneous immune infiltration or
exclusion patterns, which is verified numerically in
Section~\ref{sec:numerical-experiments}.

\section{Results (Numerical Experiments)}
\label{sec:numerical-experiments}

The numerical experiments in this section are organized around the theoretical
results of Sections~\ref{sec:mathematical-analysis}--\ref{sec:equilibria-biological-regimes}.
Rather than presenting simulations as illustrations, each experiment is designed
to verify a specific analytical statement and to quantify the agreement
between theory and computation. Concretely, we test the tumour-free threshold of
Theorem~\ref{thm:tumour-free-threshold} (Experiment~1), the coexistence
construction of Proposition~\ref{prop:coexistence-equilibria}
(Experiment~2), the chemotaxis dispersion relation derived below
(Experiment~3), the chemotaxis-driven onset of spatial heterogeneity
(Experiment~4), the parametric structure of the stability boundary
(Experiment~5), and the convergence, positivity, and residual consistency of the
finite-volume scheme of Section~\ref{sec:numerical-method} (Experiment~6).

All computations use the conservative finite-volume discretization of
Section~\ref{sec:numerical-method} with upwind chemotactic flux, discrete no-flux
boundaries, and implicit (BDF) time stepping, on the one-dimensional domain
$\Omega=(0,L)$ with $L=10$. Unless stated otherwise the parameters are those of
Table~\ref{tab:experiment-parameters}, for which the homogeneous coexistence
equilibrium of Proposition~\ref{prop:coexistence-equilibria} is
\begin{equation}\label{eq:numeric-coexistence}
(u^*,v^*,w^*)=(0.5311,\,0.4689,\,0.3900),
\end{equation}
obtained as the unique interior root of $F(u)=0$ in \eqref{eq:F-function}. This
equilibrium is stable to spatially homogeneous perturbations (the eigenvalues of
the reaction Jacobian are $-1.302$ and $-0.371\pm0.211\,i$), so any instability
reported below is genuinely chemotaxis- and wavenumber-driven.

\begin{table}[htbp]
\centering
\caption{Baseline nondimensional parameters used in the numerical experiments.
Experiment~1 instead varies $\sigma_0$ with $\delta=0.30$ fixed; Experiment~5
varies the indicated pairs.}
\label{tab:experiment-parameters}
\begin{tabular}{llllll}
\toprule
$d_1$ & $d_2$ & $d_3$ & $\sigma_0$ & $\sigma_1$ & $\delta$ \\
$0.01$ & $0.10$ & $1.00$ & $0.10$ & $0.50$ & $0.30$ \\
\midrule
$\beta$ & $\alpha$ & $\gamma$ & $\ell$ & $L$ & \\
$0.40$ & $0.50$ & $0.50$ & $1.00$ & $10$ & \\
\bottomrule
\end{tabular}
\end{table}

The linear-stability experiments use the mode-wise stability matrix $M_k$ of
\eqref{eq:Mk-matrix}, derived in Section~\ref{sec:linear-stability} by
linearizing about the homogeneous coexistence equilibrium and projecting onto the
Neumann Laplacian eigenbasis $-\Delta\phi_k=\mu_k\phi_k$ (so that
$\mu_k=(k\pi/L)^2$ in one dimension). We recall that the chemotactic sensitivity
enters $M_k$ only through the $(v,w)$ entry as $+\xi v^*\mu_k$, that the
dispersion relation is $\omega_k(\xi)=\max_j\operatorname{Re}\lambda_j(M_k)$, and
that the critical sensitivity $\xi_c$ is the smallest $\xi$ for which
$\max_{k\ge1}\omega_k(\xi)=0$.

\subsection{Experiment 1: tumour-free threshold}
\label{subsec:exp1-tumour-free}

We use this experiment to verify Theorem~\ref{thm:tumour-free-threshold}: the tumour-free equilibrium
$E_0=(0,\sigma_0/\delta,0)$ is linearly stable for $\sigma_0>\delta$ and unstable
for $\sigma_0<\delta$, with linear tumour growth rate $1-\sigma_0/\delta$
independent of chemotaxis.

We fix $\delta=0.30$ and take a small tumour seed
$u_0=10^{-3}(1+0.3\cos(2\pi x/L))$ on the tumour-free background
$v_0=\sigma_0/\delta$, $w_0=0$, with $\xi=0$. Two regimes are compared:
the stable case $\sigma_0=0.50>\delta$ and the invasion case
$\sigma_0=0.10<\delta$.

Figure~\ref{fig:exp1} reports $\|u(t)\|_\infty$, the tumour mass
$\int_\Omega u\,dx$, and the measured exponential rate against the predicted
value $1-\sigma_0/\delta$. In the stable regime the tumour decays to machine
zero at the rate $-0.6677$ (predicted $-0.6667$); in the invasion regime it grows
at $0.6635$ (predicted $0.6667$) and saturates at the coexistence value
$u^*=0.531$ from \eqref{eq:numeric-coexistence}. The measured rates lie on the
diagonal of Figure~\ref{fig:exp1}(c), confirming the threshold and the
predicted growth rate to within the time-stepping tolerance.

\begin{figure}[htbp]
\centering
\includegraphics[width=\textwidth]{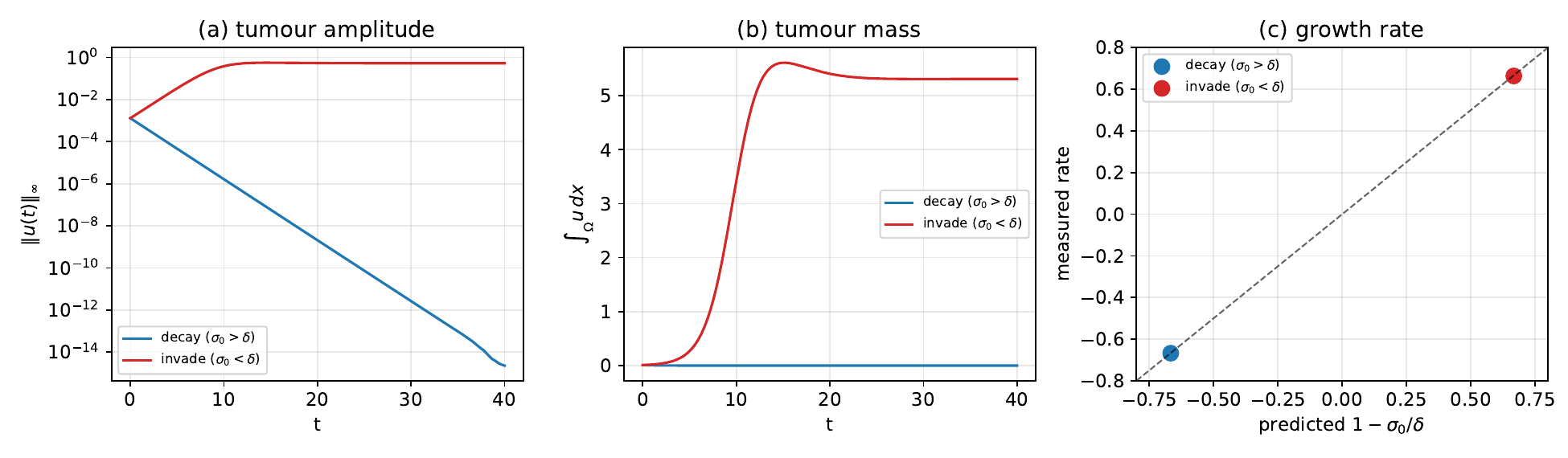}
\caption{Experiment~1. (a) Tumour amplitude $\|u(t)\|_\infty$ for the decay
regime $\sigma_0>\delta$ and the invasion regime $\sigma_0<\delta$;
(b) tumour mass $\int_\Omega u\,dx$; (c) measured versus predicted growth rate
$1-\sigma_0/\delta$ (dashed line is identity). The measured rates agree with
Theorem~\ref{thm:tumour-free-threshold}.}
\label{fig:exp1}
\end{figure}
\clearpage

\subsection{Experiment 2: coexistence equilibrium verification}
\label{subsec:exp2-coexistence}

We use this experiment to verify Proposition~\ref{prop:coexistence-equilibria}: starting from generic
positive data, the dynamics (with $\xi=0$, below any patterning threshold)
converge to the homogeneous coexistence state determined by the scalar equation
$F(u^*)=0$, with $v^*=1-u^*$ and $w^*=u^*[\alpha+\gamma(1-u^*)]/\ell$.

We integrate from spatially random nonnegative initial data
$u_0,v_0,w_0\in[0.2,0.5]$ to $t=200$ and compare the numerical steady state with
the analytical equilibrium \eqref{eq:numeric-coexistence}.

Figure~\ref{fig:exp2} shows monotone convergence of
$\|(u,v,w)-(u^*,v^*,w^*)\|_{L^2}$ to $4.4\times10^{-11}$, and the final spatial
fields coincide with the analytical equilibrium (dashed) to the same order. The
componentwise errors are $1.1\times10^{-11}$, $4.7\times10^{-12}$, and
$7.7\times10^{-12}$ for $u$, $v$, $w$ respectively, confirming both the existence
construction and the closed-form expressions for $v^*$ and $w^*$.

\begin{figure}[htbp]
\centering
\includegraphics[width=\textwidth]{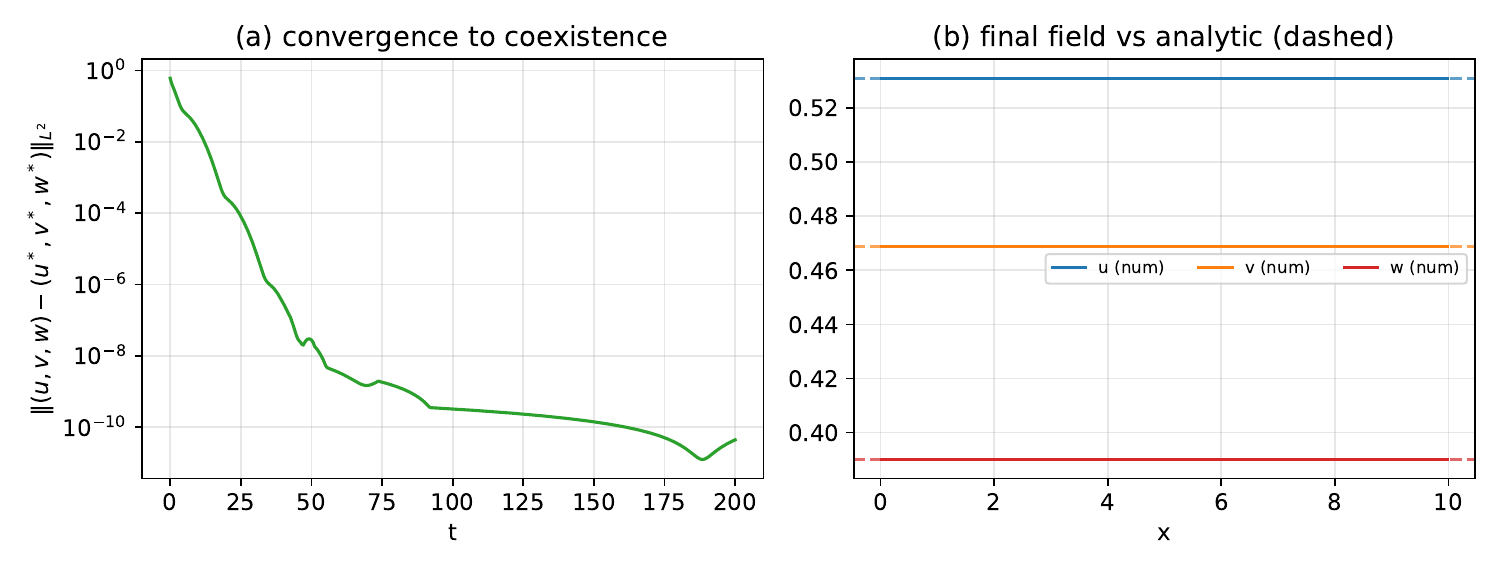}
\caption{Experiment~2. (a) Convergence of the solution to the analytical
coexistence equilibrium in $L^2$; (b) final numerical fields (solid) versus the
analytical values $(u^*,v^*,w^*)$ of \eqref{eq:numeric-coexistence} (dashed).}
\label{fig:exp2}
\end{figure}
\clearpage

\subsection{Experiment 3: dispersion relation and unstable modes}
\label{subsec:exp3-dispersion}

We use this experiment to verify the dispersion relation associated with \eqref{eq:Mk-matrix}: the
eigenvalues of $M_k$ predict which spatial modes grow, the dominant wavenumber,
and the per-mode growth rates measured from the full nonlinear simulation.

About the coexistence equilibrium \eqref{eq:numeric-coexistence} we compute
$\max_j\operatorname{Re}\lambda_j(M_k)$ as a function of the wavenumber index
$k$ for several values of $\xi$. The smallest sensitivity at which a finite-$k$
mode becomes neutral is $\xi_c\approx 12.47$.
For $\xi=1.6\,\xi_c$ we then integrate the nonlinear system from a small random
perturbation of the equilibrium, extract the discrete-cosine (Neumann-mode)
amplitudes of $v$, and fit their exponential growth rate during the linear phase.

Figure~\ref{fig:exp3}(a) shows that the homogeneous and weakly chemotactic cases
are stable for all $k$, while for $\xi=1.6\,\xi_c$ a band of intermediate
wavenumbers becomes unstable, with maximal growth near $k^*=6$. Panel~(b)
compares the linear-theory growth rates with the simulated modal growth rates:
in the active (near-marginal and unstable) band the agreement is excellent, with
correlation $0.980$, and the dominant mode matches in both location ($k^*=6$) and
rate (predicted $0.267$, measured $0.273$). Strongly damped modes saturate at the
nonlinear noise floor rather than following their large negative linear rates,
as expected, since linear theory governs only the growing band.

\begin{figure}[htbp]
\centering
\includegraphics[width=\textwidth]{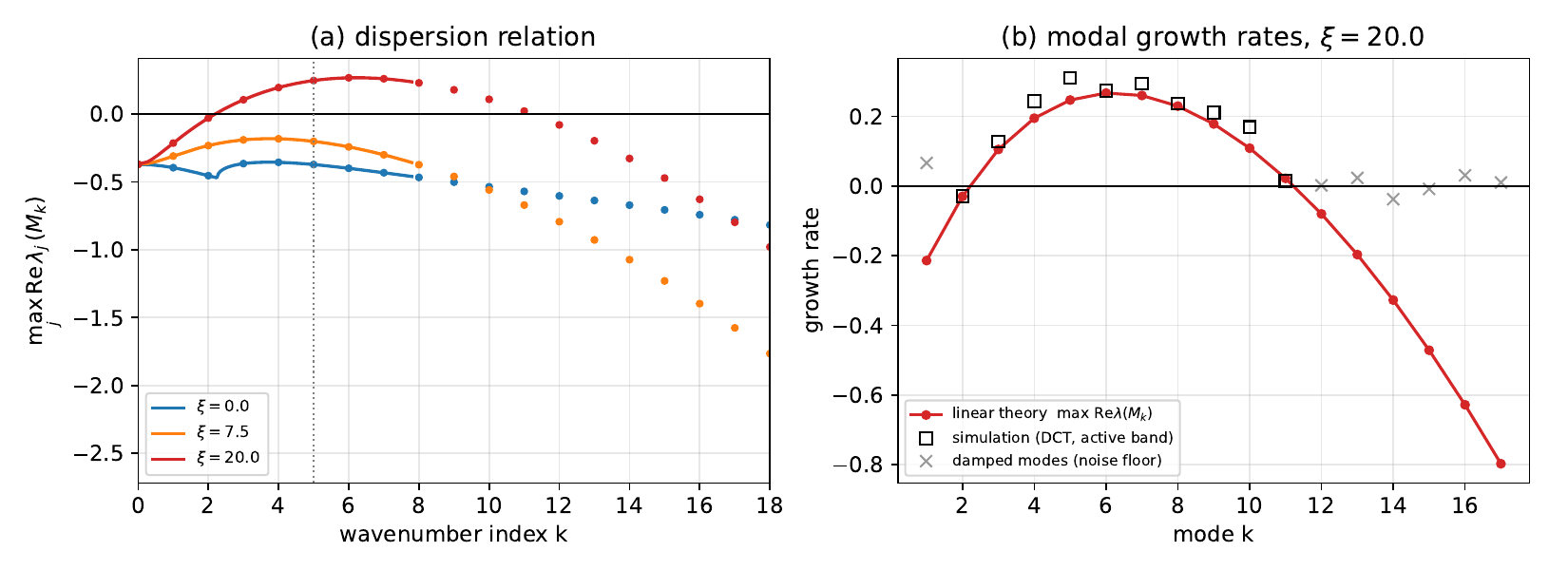}
\caption{Experiment~3. (a) Dispersion relation
$\max_j\operatorname{Re}\lambda_j(M_k)$ versus wavenumber index $k$ for
$\xi\in\{0,\,0.6\,\xi_c,\,1.6\,\xi_c\}$ (curves: continuous $\mu$; markers:
admissible Neumann modes). (b) Linear-theory modal growth rates (line) versus
rates measured from the nonlinear simulation via the discrete cosine transform
(squares: active band; crosses: damped modes at the noise floor) for
$\xi=1.6\,\xi_c$.}
\label{fig:exp3}
\end{figure}
\clearpage

\subsection{Experiment 4: chemotaxis-driven pattern formation}
\label{subsec:exp4-patterns}

We use this experiment to verify that $\xi_c$ separates spatially homogeneous behaviour from spatially
heterogeneous tumour--immune organization: convergence to the homogeneous
coexistence state for $\xi<\xi_c$, marginal small-amplitude structure near
$\xi_c$, and a sustained pattern for $\xi>\xi_c$.

Starting from the same perturbed coexistence data, we integrate to $t=120$ for
$\xi=0.7\,\xi_c$ (subcritical), $\xi\approx\xi_c$ (onset), and
$\xi=1.8\,\xi_c$ (supercritical), tracking the spatial amplitude
$\max_x u-\min_x u$.

Figure~\ref{fig:exp4} confirms the predicted transition. For $\xi=0.7\,\xi_c$ the
amplitude decays to $\sim10^{-9}$ and the fields return to homogeneity; near
$\xi_c$ a small steady modulation of amplitude $\approx 5\times10^{-3}$ persists;
for $\xi=1.8\,\xi_c$ the system develops a large-amplitude pattern in which the
immune density forms sharp aggregation peaks at chemokine maxima
($\max u-\min u\approx0.69$, $\max v-\min v\approx6.9$) with the tumour
correspondingly depleted there. The minimum of every component remains strictly
positive throughout (e.g.\ $\min u\approx0.10$, $\min v\approx0.06$ in the
supercritical run), consistent with the positivity property of
Theorem~\ref{thm:local-solvability-positivity} and the positivity-preserving
design of the scheme.

\begin{figure}[htbp]
\centering
\includegraphics[width=\textwidth]{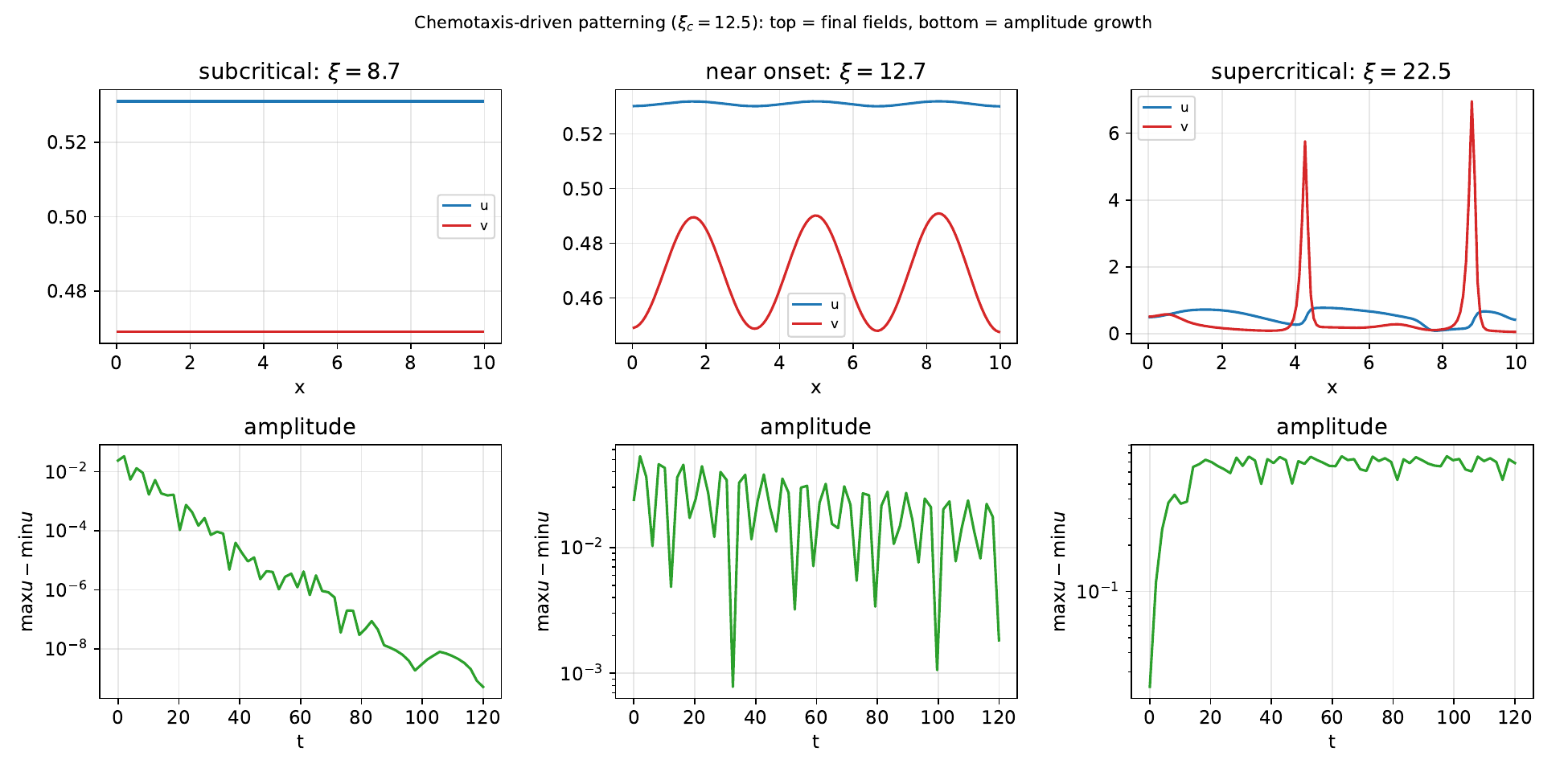}
\caption{Experiment~4. Top row: final tumour ($u$) and immune ($v$) fields for
subcritical, near-critical, and supercritical chemotactic sensitivity. Bottom
row: corresponding spatial amplitude $\max_x u-\min_x u$ versus time. The
transition homogeneous $\to$ onset $\to$ heterogeneity occurs across
$\xi_c\approx12.5$.}
\label{fig:exp4}
\end{figure}
\clearpage

\subsection{Experiment 5: two-parameter sensitivity maps}
\label{subsec:exp5-sensitivity}

We use this experiment to characterize how the patterning threshold depends on the model parameters by
mapping the stability boundary in the planes $(\xi,\sigma_1)$, $(\xi,\delta)$,
and $(\xi,d_2)$. This directly addresses sensitivity analysis of the
chemotaxis-driven instability.

For each point of a parameter grid we recompute the coexistence equilibrium from
$F(u^*)=0$ and evaluate
$\max_{k\ge1}\max_j\operatorname{Re}\lambda_j(M_k)$ over the admissible Neumann
modes. The zero level set of this quantity is the onset curve.

Figure~\ref{fig:exp5} shows the three stability maps. The instability region
(positive growth, red) expands as the chemokine-induced recruitment $\sigma_1$
increases, so that stronger recruitment lowers the critical sensitivity $\xi_c$.
Conversely, increasing the immune decay $\delta$ or the immune diffusion $d_2$
shrinks the unstable region and raises $\xi_c$: immune turnover and random
immune motility are both stabilizing, since they damp the spatial immune
aggregation that drives the instability. The monotone, smooth dependence of the
onset curve on each parameter quantifies the competition between chemotactic
aggregation and the stabilizing effects of immune loss and diffusion.

\begin{figure}[htbp]
\centering
\includegraphics[width=\textwidth]{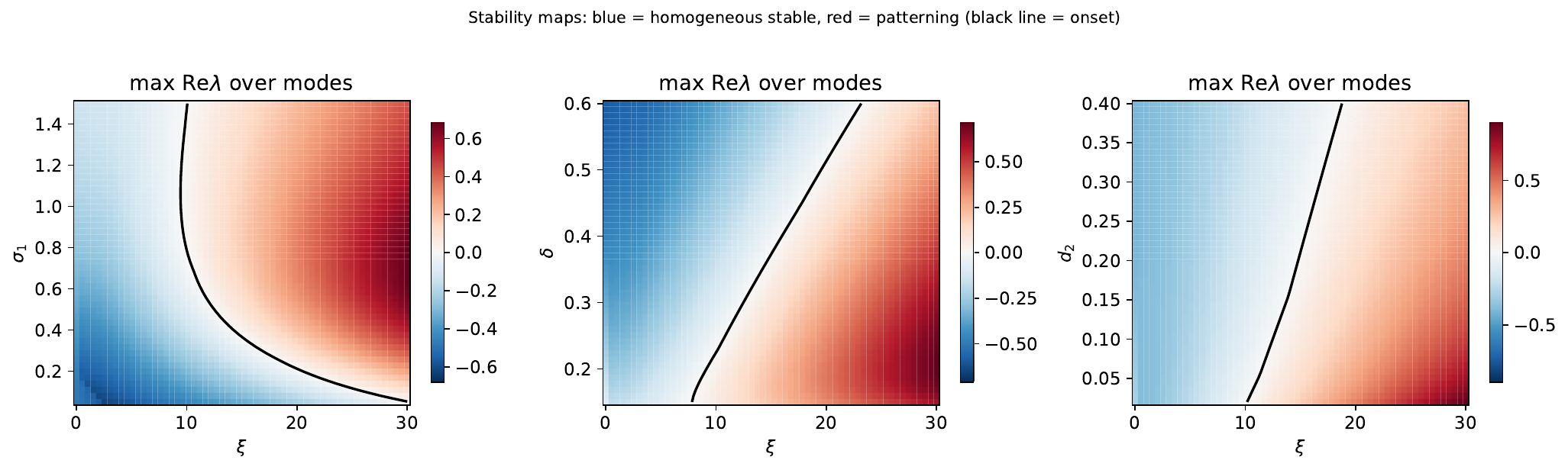}
\caption{Experiment~5. Stability maps of
$\max_{k\ge1}\max_j\operatorname{Re}\lambda_j(M_k)$ in the planes
$(\xi,\sigma_1)$, $(\xi,\delta)$, and $(\xi,d_2)$. Blue: homogeneous coexistence
stable; red: chemotaxis-driven patterning; black curve: onset
($\operatorname{Re}\lambda=0$). The equilibrium is recomputed at every grid
point.}
\label{fig:exp5}
\end{figure}
\clearpage

\subsection{Experiment 6: grid convergence and residual certification}
\label{subsec:exp6-convergence}

We use this experiment to certify that the computed patterns are properties of the PDE system and not
artifacts of the discretization, by demonstrating grid convergence, discrete
positivity, and small post-hoc PDE residuals
\eqref{eq:residual-u}--\eqref{eq:residual-w}.

In a mildly supercritical regime ($\xi=1.3\,\xi_c$, smooth low-amplitude
pattern) we integrate to $t=60$ on grids $N\in\{64,128,256,512\}$. We report the
successive differences $\|U_h-U_{h/2}\|_{L^2}$ (fine fields averaged to the
coarse grid), the running minimum of each component, and the residual norms
$\|R_u\|,\|R_v\|,\|R_w\|$ evaluated on the trajectory.

Figure~\ref{fig:exp6}(a) shows that $\|U_h-U_{h/2}\|_{L^2}$ decreases from
$5.0\times10^{-5}$ to $3.5\times10^{-6}$ as $N$ increases, an observed
convergence order of $1.92$. The order is close to two because the diffusion and
reaction terms are second-order accurate and dominate here; the first-order
upwind chemotactic flux reduces the order toward one only in strongly
chemotaxis-dominated regimes with steep immune gradients. The running minimum of
each component remains positive on every grid (uniformly $\approx0.389$,
consistent with the small pattern amplitude), confirming discrete positivity.
Figure~\ref{fig:exp6}(b) shows that the PDE residuals remain small and decay in
time (maxima $\|R_u\|\approx2\times10^{-3}$, $\|R_v\|\approx1\times10^{-2}$,
$\|R_w\|\approx6\times10^{-4}$), certifying that the computed solution is a
faithful discrete realization of \eqref{eq:nondim-model}. Together the
convergence, positivity, and residual diagnostics validate the spatial patterns
reported in Experiments~3 and~4.

\begin{figure}[htbp]
\centering
\includegraphics[width=\textwidth]{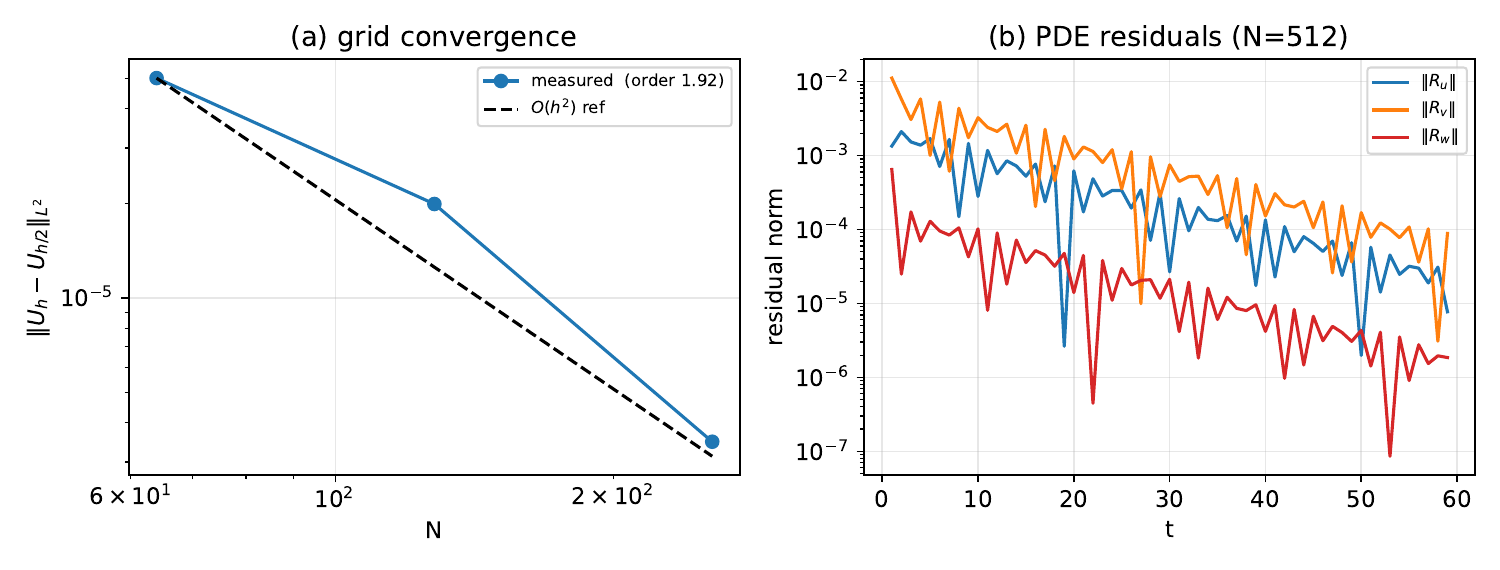}
\caption{Experiment~6. (a) Grid convergence $\|U_h-U_{h/2}\|_{L^2}$ versus $N$
(observed order $1.92$; dashed $\mathcal O(h^2)$ reference). (b) Post-hoc partial differential equation (PDE) residual
norms $\|R_u\|,\|R_v\|,\|R_w\|$ versus time on the finest grid.}
\label{fig:exp6}
\end{figure}
\clearpage

\subsection{Experiment 7: two-dimensional pattern and immune-infiltration foci}
\label{subsec:exp7-2d}

To illustrate the biologically relevant regime, we integrate the full system on
the square $\Omega=(0,10)^2$ with the two-dimensional finite-volume scheme of
Section~\ref{subsec:two-dimensional-extension}, using the CFL-limited explicit
stepper of Section~\ref{subsec:positivity-cfl} (which guarantees discrete
positivity, see Proposition~\ref{prop:positivity-cfl}). Starting from a small
perturbation of the coexistence equilibrium \eqref{eq:numeric-coexistence} at
$\xi=1.4\,\xi_c$, the solution self-organizes into the pattern of
Figure~\ref{fig:exp7-2d}: the immune density condenses into sharp aggregation
\emph{foci} located at local maxima of the chemokine field, where the tumour is
correspondingly depleted, while intervening regions remain immune-poor and
tumour-rich. The minimum of every component stays strictly positive throughout
($\min(u,v,w)\approx1.6\times10^{-3}>0$), consistent with
Proposition~\ref{prop:positivity-cfl}. This is the two-dimensional, spatially
explicit counterpart of the ``hot'' (infiltrated) versus ``cold'' (excluded)
dichotomy: chemokine-directed chemotaxis concentrates effector cells into
discrete infiltration hot-spots rather than distributing them uniformly.

\begin{figure}[htbp]
\centering
\includegraphics[width=\textwidth]{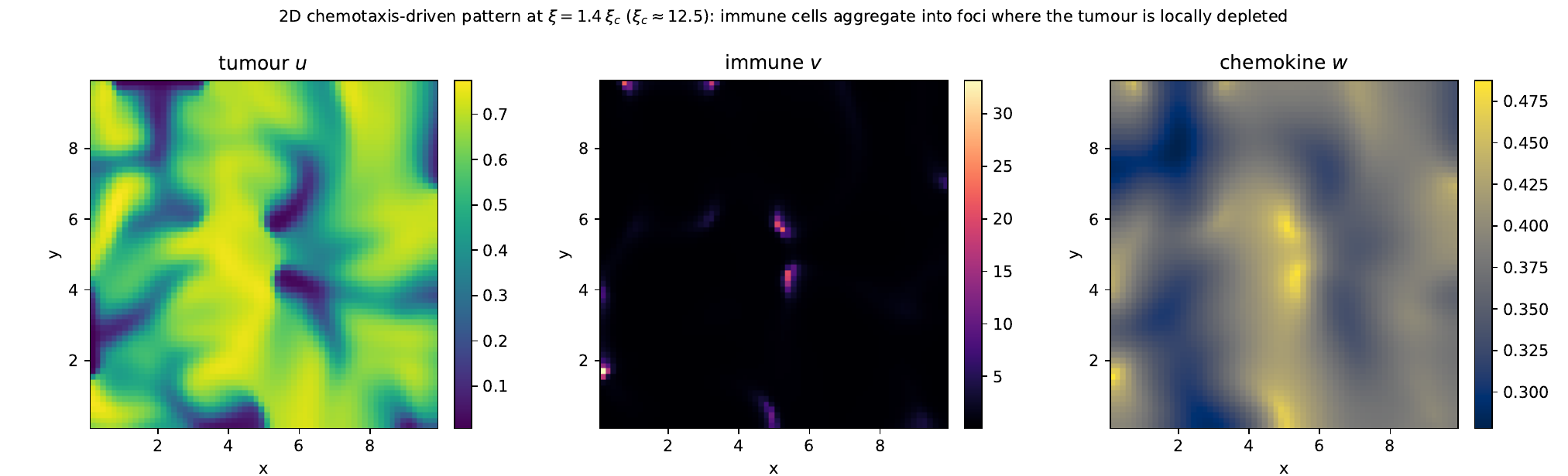}
\caption{Experiment~7. Two-dimensional chemotaxis-driven pattern at
$\xi=1.4\,\xi_c$ ($\xi_c\approx12.5$) on $\Omega=(0,10)^2$, shown at $t=80$.
Left: tumour $u$ (labyrinthine network of tumour-rich plateaus and depleted
channels); centre: immune $v$ (discrete aggregation foci, peak $\approx33$
against baseline $v^*\approx0.47$); right: chemokine $w$ (smooth field whose
maxima coincide with the immune foci). Immune effectors concentrate into
infiltration hot-spots at chemokine maxima where the tumour is locally cleared.
Discrete positivity is preserved ($\min>0$).}
\label{fig:exp7-2d}
\end{figure}
\clearpage

\subsection{Summary of verification}
\label{subsec:exp-summary}

Table~\ref{tab:verification-summary} collects the quantitative agreement between
theory and computation. Across all six experiments the simulations reproduce the
analytical thresholds and equilibria to within solver tolerance and confirm the
chemotaxis-driven origin of the spatial patterns, while the positivity,
mass-balance, and residual diagnostics certify the reliability of the numerical
results.

\begin{table}[htbp]
\centering
\caption{Quantitative verification of the theoretical results.}
\label{tab:verification-summary}
\begin{tabular}{lll}
\toprule
Experiment & Theoretical prediction & Numerical result \\
\midrule
1 (decay) & rate $1-\sigma_0/\delta=-0.6667$ & $-0.6677$ \\
1 (invasion) & rate $1-\sigma_0/\delta=+0.6667$ & $+0.6635$ \\
2 & $(u^*,v^*,w^*)=(0.5311,0.4689,0.3900)$ & error $4.4\times10^{-11}$ \\
3 & dominant mode $k^*=6$, rate $0.267$ & $k^*=6$, rate $0.273$ (corr.\ $0.980$) \\
4 & onset at $\xi_c\approx12.47$ & decay / onset / pattern across $\xi_c$ \\
5 & $\sigma_1\uparrow$ destabilizes; $\delta,d_2\uparrow$ stabilize & confirmed monotone onset curves \\
6 & convergent, positive, consistent & order $1.92$, $\min>0$, residuals $\lesssim10^{-2}$ \\
\bottomrule
\end{tabular}
\end{table}

\section{Conclusion}
\label{sec:conclusion}

We have analysed a minimal but mechanistically complete
reaction--diffusion--chemotaxis model of tumour--immune--chemokine dynamics in
which a tumour-produced chemokine both recruits immune effector cells and directs
their chemotactic migration. After nondimensionalization we established local
classical solvability, invariance of the nonnegative cone, a uniform
tumour-density bound, and global-in-time immune and chemokine mass estimates,
and we delineated precisely why these estimates fall short of global
$L^\infty$-boundedness for the full three-component chemotaxis system.

On the level of equilibria, the tumour-free state is governed by the immune
control index $\sigma_0/\delta$, with linear stability for $\sigma_0>\delta$ and
invasion for $\sigma_0<\delta$, and coexistence reduces to a single scalar
equation $F(u^*)=0$. Linearizing about coexistence and projecting onto the
Neumann eigenbasis yields the stability matrix $M_k$ in which the chemotactic
sensitivity appears only through the wavenumber-amplified coupling
$\xi v^*\mu_k$. Through the Routh--Hurwitz analysis this coupling drives a
finite-wavelength instability above a critical sensitivity $\xi_c$, which for the
parameters studied here is an oscillatory (Hopf-type) crossing rather than a
stationary one.

The numerical contribution is a conservative finite-volume scheme with upwind
chemotactic flux, discrete no-flux boundaries, and positivity, mass-balance, and
residual diagnostics. Rather than presenting simulations as illustrations, we
organized the experiments around the explicit verification of each theoretical
result. The simulations reproduced the tumour-free growth rate $1-\sigma_0/\delta$
and threshold, recovered the analytical coexistence equilibrium to solver
tolerance, matched the dispersion relation and the dominant unstable wavenumber,
exhibited the predicted homogeneous-to-heterogeneous transition across $\xi_c$,
mapped the stability boundary in three parameter planes, and passed grid
convergence and residual certification while preserving discrete positivity.

Several directions remain open. A global boundedness theory for the full system
would require additional structure, such as small chemotactic sensitivity,
stronger logistic damping, signal-dependent desensitization, or
dimension-dependent regularity estimates. The weakly nonlinear analysis of the
Hopf-type onset, the classification of the emerging spatio-temporal patterns in
two and three dimensions, and the incorporation of treatment terms
(immunotherapy, chemotherapy) are natural extensions of the present framework.

\appendix
\section{Proofs of Technical Estimates}
\label{app:technical-estimates}

This appendix collects the technical arguments supporting the local solvability,
positivity, and a priori estimates used in Section~\ref{sec:mathematical-analysis}.
The goal is not to prove a full global boundedness theorem for the
three-component chemotaxis system, but rather to record the estimates that are
sufficient for the equilibrium, stability, and numerical verification results
developed in the main text.
We consider the nondimensional system \eqref{eq:nondim-model}
posed in a bounded smooth domain $\Omega\subset\mathbb R^d$ with no-flux
boundary conditions \eqref{eq:nondim-noflux}.
The initial data are assumed to satisfy
\[
u_0,v_0,w_0\ge0
\quad\text{in }\overline\Omega .
\]

\subsection{Normal parabolicity, local solvability, and positivity}
\label{app:normal-parabolicity}
Writing the principal part of \eqref{eq:nondim-model} in divergence form,
\[
 \partial_tU=\nabla\cdot\bigl(\mathcal A(U)\nabla U\bigr)
 +\mathcal R(U),
 \qquad U=(u,v,w)^\top,
\]
gives
\[
\mathcal A(U)=
\begin{pmatrix}
d_1&0&0\\
0&d_2&-\xi v\\
0&0&d_3
\end{pmatrix}.
\]
For every $\zeta\in\mathbb R^d\setminus\{0\}$,
\[
 \sigma\!\left(|\zeta|^2\mathcal A(U)\right)
 =
 \{d_1|\zeta|^2,d_2|\zeta|^2,d_3|\zeta|^2\}.
\]
Consequently, with $d_*:=\min\{d_1,d_2,d_3\}>0$,
\[
 \min_{\lambda\in
 \sigma(|\zeta|^2\mathcal A(U))}
 \operatorname{Re}\lambda
 \ge d_*|\zeta|^2 .
\]
This is the normal-ellipticity condition. The conclusion
remains valid when two diffusivities coincide, even though the $(v,w)$ block
may then have a nontrivial Jordan form. On every set $|v|\le M$, the
coefficients and the corresponding sectoriality constants are uniform.

The boundary conditions are compatible with this normally elliptic principal
part. Indeed, $\partial_nw=0$ and
$d_2\partial_nv-\xi v\partial_nw=0$ imply $\partial_nv=0$, so the no-flux
conditions are equivalent, for classical solutions, to componentwise
homogeneous Neumann conditions. The associated boundary operator satisfies
the standard complementing condition. Moreover, $\mathcal A$ and the reaction
map are smooth on the open phase domain
\[
 \mathcal O:=\mathbb R^2\times(-1,\infty).
\]
Thus, on a smooth bounded domain and for sufficiently regular compatible
initial data, standard quasilinear parabolic theory
\cite{amann1993nonhomogeneous} yields a unique maximal classical solution.

We next prove invariance of the nonnegative cone. Fix $T<T_{\max}$. Classical
regularity implies that $u,v,w$ and $\nabla w$ are bounded on
$[0,T]\times\overline\Omega$. By continuity, it is enough initially to work on
a time interval on which $w>-1/2$. Define
\[
 u_-:=\max\{-u,0\},\qquad
 v_-:=\max\{-v,0\},\qquad
 w_-:=\max\{-w,0\}.
\]
Testing the immune equation in conservative form by $-v_-$ and using the
no-flux boundary condition gives
\[
\begin{aligned}
\frac12\frac{d}{dt}\|v_-\|_{L^2}^2
={}&-d_2\|\nabla v_-\|_{L^2}^2
+\xi\int_\Omega v_-\nabla v_-\cdot\nabla w\,dx\\
&-\int_\Omega(\delta+\beta u)v_-^2\,dx
-\int_\Omega
\left(\sigma_0+\sigma_1\frac{w}{1+w}\right)v_-\,dx .
\end{aligned}
\]
Young's inequality and boundedness of $u$ and $\nabla w$ imply
\[
 \frac12\frac{d}{dt}\|v_-\|_{L^2}^2
 +\frac{d_2}{2}\|\nabla v_-\|_{L^2}^2
 \le C_T\|v_-\|_{L^2}^2
 -\int_\Omega
 \left(\sigma_0+\sigma_1\frac{w}{1+w}\right)v_-\,dx .
\]
On $\{w<0\}$, because $w>-1/2$,
\[
 \left|\frac{w}{1+w}\right|\le 2w_-,
\]
and hence
\[
 -\left(\sigma_0+\sigma_1\frac{w}{1+w}\right)v_-
 \le 2\sigma_1w_-v_-
 \le \sigma_1\bigl(w_-^2+v_-^2\bigr).
\]
Therefore
\[
 \frac{d}{dt}\|v_-\|_{L^2}^2
 \le C_T\bigl(\|v_-\|_{L^2}^2+\|w_-\|_{L^2}^2\bigr).
\]

Testing the $u$- and $w$-equations by $-u_-$ and $-w_-$, respectively, gives
\[
 \frac{d}{dt}\|u_-\|_{L^2}^2
 \le C_T\|u_-\|_{L^2}^2
\]
and
\[
 \frac{d}{dt}\|w_-\|_{L^2}^2
 \le C_T\left(
 \|u_-\|_{L^2}^2+\|v_-\|_{L^2}^2+\|w_-\|_{L^2}^2
 \right).
\]
For the latter estimate we use boundedness of $u$ and $v$ and
\[
 (uv)_-\le |v|u_-+|u|v_-.
\]
Adding the three inequalities and applying Gronwall's lemma yields
\[
 \|u_-(t)\|_{L^2}^2+
 \|v_-(t)\|_{L^2}^2+
 \|w_-(t)\|_{L^2}^2=0
\]
because all three negative parts vanish initially. A continuation argument
then extends the conclusion to every $T<T_{\max}$. Thus
\[
 u(t,x)\ge0,\qquad v(t,x)\ge0,\qquad w(t,x)\ge0
\]
throughout the classical solution's interval of existence.

\subsection{Tumour-density bound}
\label{app:tumour-bound}
We prove Proposition~\ref{prop:tumour-density-bound}. Since $v\ge0$, the first
equation of \eqref{eq:nondim-model} satisfies
\[
u_t
=
d_1\Delta u+u(1-u-v)
\le
d_1\Delta u+u(1-u).
\]
Let $y(t)$ solve the scalar logistic equation
\[
y'=y(1-y),
\qquad
y(0)=\|u_0\|_{L^\infty(\Omega)}.
\]
The comparison principle under homogeneous Neumann boundary conditions gives
\[
0\le u(t,x)\le y(t)
\quad
\text{for }0<t<T_{\max},\ x\in\overline\Omega.
\]
The explicit solution of the logistic equation satisfies
\[
y(t)\le \max\{1,y(0)\}.
\]
Therefore
\begin{equation}\label{eq:app-u-bound}
0\le u(t,x)\le M_u
:=
\max\{1,\|u_0\|_{L^\infty(\Omega)}\}
\quad
\text{for }0<t<T_{\max}.
\end{equation}

\subsection{Immune mass estimate}
\label{app:v-mass}
We prove the first estimate in Proposition~\ref{prop:mass-estimates}. Integrating
the $v$-equation in \eqref{eq:nondim-model} over $\Omega$ gives
\[
\frac{d}{dt}\int_\Omega v\,dx
=
d_2\int_\Omega \Delta v\,dx
-\xi\int_\Omega \nabla\cdot(v\nabla w)\,dx
+
\int_\Omega
\left(
\sigma_0+\sigma_1\frac{w}{1+w}-\delta v-\beta uv
\right)dx.
\]
The diffusion and chemotaxis integrals vanish because of the no-flux condition:
\[
\int_\Omega \nabla\cdot(d_2\nabla v-\xi v\nabla w)\,dx
=
\int_{\partial\Omega}(d_2\nabla v-\xi v\nabla w)\cdot n\,dS
=
0.
\]
Since $u,v,w\ge0$ and
\[
0\le \frac{w}{1+w}\le1,
\]
we obtain
\begin{equation}\label{eq:app-v-differential}
\frac{d}{dt}\int_\Omega v\,dx
\le
|\Omega|(\sigma_0+\sigma_1)-\delta\int_\Omega v\,dx.
\end{equation}
Solving the scalar differential inequality yields
\begin{equation}\label{eq:app-v-bound}
\int_\Omega v(t,x)\,dx
\le
e^{-\delta t}\int_\Omega v_0(x)\,dx
+
\frac{|\Omega|(\sigma_0+\sigma_1)}{\delta}
\left(1-e^{-\delta t}\right).
\end{equation}
Consequently,
\begin{equation}\label{eq:app-v-uniform-bound}
\sup_{0<t<T_{\max}}\int_\Omega v(t,x)\,dx
\le
C_v,
\end{equation}
where
\[
C_v
:=
\max\left\{
\int_\Omega v_0(x)\,dx,\,
\frac{|\Omega|(\sigma_0+\sigma_1)}{\delta}
\right\}.
\]

\subsection{Chemokine mass estimate}
\label{app:w-mass}
We now prove the chemokine mass estimate. Integrating the $w$-equation in
\eqref{eq:nondim-model} over $\Omega$ and using the no-flux boundary condition
gives
\[
\frac{d}{dt}\int_\Omega w\,dx
=
\int_\Omega
\left(
\alpha u+\gamma uv-\ell w
\right)dx.
\]
Using the tumour-density bound \eqref{eq:app-u-bound}, we obtain
\[
\int_\Omega \alpha u\,dx
\le
\alpha M_u|\Omega|,
\qquad
\int_\Omega \gamma uv\,dx
\le
\gamma M_u\int_\Omega v\,dx.
\]
Therefore
\begin{equation}\label{eq:app-w-differential}
\frac{d}{dt}\int_\Omega w\,dx
\le
\alpha M_u|\Omega|
+
\gamma M_u\int_\Omega v\,dx
-
\ell\int_\Omega w\,dx.
\end{equation}
By \eqref{eq:app-v-uniform-bound},
\[
\int_\Omega v(t,x)\,dx\le C_v
\quad\text{for all }t<T_{\max}.
\]
Thus
\[
\frac{d}{dt}\int_\Omega w\,dx
\le
\alpha M_u|\Omega|+\gamma M_u C_v
-\ell\int_\Omega w\,dx.
\]
Solving this scalar differential inequality gives
\begin{equation}\label{eq:app-w-bound}
\int_\Omega w(t,x)\,dx
\le
e^{-\ell t}\int_\Omega w_0(x)\,dx
+
\frac{\alpha M_u|\Omega|+\gamma M_u C_v}{\ell}
\left(1-e^{-\ell t}\right).
\end{equation}
Consequently,
\begin{equation}\label{eq:app-w-uniform-bound}
\sup_{0<t<T_{\max}}\int_\Omega w(t,x)\,dx
\le
C_w,
\end{equation}
where
\[
C_w
:=
\max\left\{
\int_\Omega w_0(x)\,dx,\,
\frac{\alpha M_u|\Omega|+\gamma M_u C_v}{\ell}
\right\}.
\]

\subsection{Why these estimates do not imply full global boundedness}
\label{app:no-global-linfty}
The estimates derived above imply
\[
\|u(t)\|_{L^\infty(\Omega)}\le M_u,
\qquad
\|v(t)\|_{L^1(\Omega)}\le C_v,
\qquad
\|w(t)\|_{L^1(\Omega)}\le C_w
\]
on the interval of classical existence. These estimates are sufficient for the
basic biological interpretation of the model and for the equilibrium calculations
in the main text. However, they do not by themselves imply
\[
\sup_{0<t<T_{\max}}
\left(
\|v(t)\|_{L^\infty(\Omega)}
+
\|w(t)\|_{L^\infty(\Omega)}
\right)<\infty.
\]
The reason is that the immune equation contains the chemotactic aggregation
term
\[
-\xi\nabla\cdot(v\nabla w),
\]
which can transfer spatial gradients of the chemokine field into concentration
of the immune density. An $L^1$ bound on $v$ and an $L^1$ bound on $w$ do not
control the gradients $\nabla w$ or the pointwise size of $v$.
Thus a full global boundedness theorem would require additional structure, such
as one or more of the following: small chemotactic sensitivity, strong logistic
damping, signal-dependent chemotactic desensitization, entropy or Lyapunov
estimates, dimension-dependent regularity arguments, or stronger a priori control
of $\nabla w$.
For this reason, the main text states local classical well-posedness, positivity,
tumour-density boundedness, and mass estimates, but does not claim global
$L^\infty$-boundedness for the full three-component chemotaxis system in
arbitrary dimension.

\section{Details of the Routh--Hurwitz Calculation}
\label{app:routh-hurwitz}
This appendix gives the algebra leading to the mode-wise stability criterion used
in Sections~\ref{sec:linear-stability} and~\ref{sec:numerical-experiments}. We
linearize the nondimensional system \eqref{eq:nondim-noflux} about 
a spatially homogeneous coexistence equilibrium
\[
(u^*,v^*,w^*),\qquad
u^*>0,\quad v^*>0,\quad w^*>0,
\]
where
\begin{equation}\label{eq:app-rh-coexistence}
v^*=1-u^*,
\qquad
w^*=\frac{u^*[\alpha+\gamma(1-u^*)]}{\ell}.
\end{equation}

\subsection{Linearization about coexistence}
\label{app:rh-linearization}
Let
\[
u=u^*+\widetilde u,\qquad
v=v^*+\widetilde v,\qquad
w=w^*+\widetilde w .
\]
Keeping only first-order terms gives
\begin{equation}\label{eq:app-rh-linearized-system}
\begin{aligned}
\widetilde u_t
&=
d_1\Delta\widetilde u
+
(1-2u^*-v^*)\widetilde u
-u^*\widetilde v,\\
\widetilde v_t
&=
d_2\Delta\widetilde v
-\xi v^*\Delta\widetilde w
-\beta v^*\widetilde u
-(\delta+\beta u^*)\widetilde v
+\frac{\sigma_1}{(1+w^*)^2}\widetilde w,\\
\widetilde w_t
&=
d_3\Delta\widetilde w
+
(\alpha+\gamma v^*)\widetilde u
+\gamma u^*\widetilde v
-\ell\widetilde w .
\end{aligned}
\end{equation}
Since \(v^*=1-u^*\), we have $1-2u^*-v^*=-u^*$, so the first equation becomes
\[
\widetilde u_t
=
d_1\Delta\widetilde u-u^*\widetilde u-u^*\widetilde v .
\]
The sign of the chemotaxis term is important. Since the chemotactic contribution
is $-\xi\nabla\cdot(v\nabla w)$, its linearization at the homogeneous equilibrium
is
\[
-\xi\nabla\cdot(v^*\nabla\widetilde w)
=
-\xi v^*\Delta\widetilde w .
\]
After projection onto a Neumann eigenmode, this term becomes a positive
\((v,w)\)-coupling, as shown below.

\subsection{Projection onto Neumann modes}
\label{app:rh-neumann-modes}
Let \(\{\phi_k\}_{k=0}^{\infty}\) be the Neumann Laplacian eigenfunctions,
\[
-\Delta\phi_k=\mu_k\phi_k,\qquad
\partial_n\phi_k=0\quad\text{on }\partial\Omega,
\qquad
0=\mu_0<\mu_1\le\mu_2\le\cdots .
\]
For a single mode, write
\[
\widetilde u(t,x)=U_k(t)\phi_k(x),\qquad
\widetilde v(t,x)=V_k(t)\phi_k(x),\qquad
\widetilde w(t,x)=W_k(t)\phi_k(x).
\]
Since \(\Delta\phi_k=-\mu_k\phi_k\), the modal amplitudes satisfy
\[
\frac{d}{dt}
\begin{pmatrix}
U_k\\ V_k\\ W_k
\end{pmatrix}
=
M_k
\begin{pmatrix}
U_k\\ V_k\\ W_k
\end{pmatrix},
\]
where
\begin{equation}\label{eq:app-Mk}
M_k=
\begin{pmatrix}
-u^*-d_1\mu_k & -u^* & 0\\[2mm]
-\beta v^* & -(\delta+\beta u^*)-d_2\mu_k &
\dfrac{\sigma_1}{(1+w^*)^2}+\xi v^*\mu_k\\[3mm]
\alpha+\gamma v^* & \gamma u^* & -\ell-d_3\mu_k
\end{pmatrix}.
\end{equation}
The positive contribution \(+\xi v^*\mu_k\) in the \((2,3)\)-entry follows from
\[
-\xi v^*\Delta\widetilde w
=
-\xi v^*(-\mu_k W_k\phi_k)
=
\xi v^*\mu_k W_k\phi_k .
\]
Thus chemotaxis has no effect on the homogeneous mode through this term because
\(\mu_0=0\), but it can destabilize finite spatial modes with \(\mu_k>0\). This is
the matrix \eqref{eq:Mk-matrix} used in the main text.

\subsection{Characteristic polynomial}
\label{app:rh-characteristic-polynomial}
For compactness define
\begin{equation}\label{eq:app-rh-notation}
A_k=u^*+d_1\mu_k,\qquad
B_k=\delta+\beta u^*+d_2\mu_k,\qquad
C_k=\ell+d_3\mu_k,
\end{equation}
and
\begin{equation}\label{eq:app-rh-notation-2}
p=u^*,\qquad
q=\beta v^*,\qquad
r_k=\frac{\sigma_1}{(1+w^*)^2}+\xi v^*\mu_k,\qquad
s=\alpha+\gamma v^*,\qquad
t=\gamma u^*.
\end{equation}
Then \eqref{eq:app-Mk} can be written as
\begin{equation}\label{eq:app-Mk-compact}
M_k=
\begin{pmatrix}
-A_k & -p & 0\\
-q & -B_k & r_k\\
s & t & -C_k
\end{pmatrix}.
\end{equation}
The characteristic polynomial is \(P_k(\lambda)=\det(\lambda I-M_k)\). Since
\[
\lambda I-M_k
=
\begin{pmatrix}
\lambda+A_k & p & 0\\
q & \lambda+B_k & -r_k\\
-s & -t & \lambda+C_k
\end{pmatrix},
\]
expansion along the first row gives
\[
P_k(\lambda)
=
(\lambda+A_k)
\left[
(\lambda+B_k)(\lambda+C_k)-r_k t
\right]
-
p\left[
q(\lambda+C_k)-r_k s
\right].
\]
Therefore
\[
P_k(\lambda)
=
\lambda^3+a_1(k)\lambda^2+a_2(k)\lambda+a_3(k),
\]
where
\begin{equation}\label{eq:app-a1}
a_1(k)=A_k+B_k+C_k,
\end{equation}
\begin{equation}\label{eq:app-a2}
a_2(k)=A_kB_k+A_kC_k+B_kC_k-r_k t-pq,
\end{equation}
\begin{equation}\label{eq:app-a3}
a_3(k)=A_kB_kC_k-A_k r_k t-pq C_k+p r_k s.
\end{equation}
Equivalently,
\begin{equation}\label{eq:app-a3-alternative}
a_3(k)
=
A_kB_kC_k-C_kpq+r_k(ps-A_kt).
\end{equation}

\subsection{Routh--Hurwitz conditions}
\label{app:rh-conditions}
The mode \(k\) is linearly stable if and only if all roots of
\(P_k(\lambda)=\lambda^3+a_1(k)\lambda^2+a_2(k)\lambda+a_3(k)\) lie in the open
left half-plane. By the Routh--Hurwitz criterion for a monic cubic, this holds if
and only if
\begin{equation}\label{eq:app-rh-conditions}
a_1(k)>0,\qquad
a_2(k)>0,\qquad
a_3(k)>0,\qquad
a_1(k)a_2(k)>a_3(k).
\end{equation}
Since \(A_k,B_k,C_k>0\), we always have \(a_1(k)>0\). The remaining inequalities
encode the competition between diffusive damping, reaction damping, chemokine
recruitment, and chemotactic amplification.

\subsection{Chemotactic contribution to the constant term}
\label{app:rh-chemotaxis-constant-term}
The dependence of \(a_3(k)\) on the chemotactic sensitivity \(\xi\) is especially
important. Since \(r_k=\sigma_1/(1+w^*)^2+\xi v^*\mu_k\), we can rewrite
\eqref{eq:app-a3-alternative} as
\begin{equation}\label{eq:app-a3-xi}
a_3(k;\xi)
=
\Lambda_k+\xi v^*\mu_k(ps-A_kt),
\end{equation}
where
\begin{equation}\label{eq:app-Lambda-k}
\Lambda_k
=
A_kB_kC_k-C_kpq
+
\frac{\sigma_1}{(1+w^*)^2}(ps-A_kt).
\end{equation}
Equivalently,
\begin{equation}\label{eq:app-a3-xi-negative-form}
a_3(k;\xi)
=
\Lambda_k-\xi v^*\mu_k(A_kt-ps).
\end{equation}
Thus, if
\begin{equation}\label{eq:app-destabilizing-condition}
A_kt-ps>0,
\end{equation}
then increasing \(\xi\) decreases \(a_3(k;\xi)\). In this case, if
\(\Lambda_k>0\), the \(k\)-th mode violates the Routh--Hurwitz condition
\(a_3(k)>0\) whenever
\begin{equation}\label{eq:app-xi-k-star}
\xi>\xi_k^*
:=
\frac{\Lambda_k}{v^*\mu_k(A_kt-ps)} .
\end{equation}
If \(\Lambda_k\le0\), then \(a_3(k;\xi)\le0\) already at \(\xi=0\), so the mode
is unstable before chemotaxis is increased. Hence \eqref{eq:app-xi-k-star} is a
useful chemotactic threshold only in the regime \(\Lambda_k>0,\ A_kt-ps>0\). The
first instability generated through this mechanism occurs at
\begin{equation}\label{eq:app-xi-critical}
\xi_c
=
\min_{\substack{k\ge1\\ \Lambda_k>0,\ A_kt-ps>0}}
\xi_k^* .
\end{equation}
The restriction \(k\ge1\) is necessary because the homogeneous mode has
\(\mu_0=0\), and the chemotactic contribution \(\xi v^*\mu_k\) vanishes for
\(k=0\).

\subsection{Alternative instability through the determinant condition}
\label{app:rh-other-instabilities}
The condition \(a_3(k)>0\) is only one of the Routh--Hurwitz inequalities.
A mode may also lose stability through \(a_2(k)=0\) or through
\(a_1(k)a_2(k)-a_3(k)=0\). The latter corresponds to a Hopf-type crossing for the
modal ODE system when the remaining Routh--Hurwitz inequalities hold. Therefore,
in numerical stability maps it is safest to compute
\[
\omega_k(\xi)
=
\max_j\operatorname{Re}\lambda_j(M_k)
\]
directly and define the onset by \(\max_{k\ge1}\omega_k(\xi)=0\). The explicit
threshold \eqref{eq:app-xi-k-star} should be interpreted as an analytical
sufficient destabilization criterion through the constant term \(a_3(k)\), not as
the only possible instability mechanism. Indeed, for the parameter set of
Section~\ref{sec:numerical-experiments} one has \(A_kt-ps<0\) in the relevant
wavenumber band, so the constant-term mechanism is inactive and the onset is the
Hopf-type crossing.

\bibliographystyle{unsrt}
\bibliography{reference}

@article{anderson2008integrative,
  title={Integrative mathematical oncology},
  author={Anderson, Alexander RA and Quaranta, Vito},
  journal={Nature Reviews Cancer},
  volume={8},
  number={3},
  pages={227--234},
  year={2008},
  publisher={Nature Publishing Group UK London}
}

@article{altrock2015mathematics,
  title={The mathematics of cancer: integrating quantitative models},
  author={Altrock, Philipp M and Liu, Lin L and Michor, Franziska},
  journal={Nature Reviews Cancer},
  volume={15},
  number={12},
  pages={730--745},
  year={2015},
  publisher={Nature Publishing Group UK London}
}

@article{rockne20192019,
  title={The 2019 mathematical oncology roadmap},
  author={Rockne, Russell C and Hawkins-Daarud, Andrea and Swanson, Kristin R and Sluka, James P and Glazier, James A and Macklin, Paul and Hormuth, David A and Jarrett, Angela M and Lima, Ernesto ABF and Tinsley Oden, J and others},
  journal={Physical biology},
  volume={16},
  number={4},
  pages={041005},
  year={2019},
  publisher={IOP Publishing}
}

@article{metzcar2024review,
  title={A review of mechanistic learning in mathematical oncology},
  author={Metzcar, John and Jutzeler, Catherine R and Macklin, Paul and K{\"o}hn-Luque, Alvaro and Br{\"u}ningk, Sarah C},
  journal={Frontiers in Immunology},
  volume={15},
  pages={1363144},
  year={2024},
  publisher={Frontiers Media SA}
}

@article{roose2007mathematical,
  title={Mathematical models of avascular tumor growth},
  author={Roose, Tiina and Chapman, S Jonathan and Maini, Philip K},
  journal={SIAM review},
  volume={49},
  number={2},
  pages={179--208},
  year={2007},
  publisher={SIAM}
}

@article{kuznetsov1994nonlinear,
  title={Nonlinear dynamics of immunogenic tumors: parameter estimation and global bifurcation analysis},
  author={Kuznetsov, Vladimir A and Makalkin, Iliya A and Taylor, Mark A and Perelson, Alan S},
  journal={Bulletin of mathematical biology},
  volume={56},
  number={2},
  pages={295--321},
  year={1994},
  publisher={Elsevier}
}

@article{kirschner1998modeling,
  title={Modeling immunotherapy of the tumor--immune interaction},
  author={Kirschner, Denise and Panetta, John Carl},
  journal={Journal of mathematical biology},
  volume={37},
  number={3},
  pages={235--252},
  year={1998},
  publisher={Springer}
}

@article{de2001mathematical,
  title={A mathematical tumor model with immune resistance and drug therapy: an optimal control approach},
  author={De Pillis, Lisette G and Radunskaya, Ami},
  journal={Computational and Mathematical Methods in Medicine},
  volume={3},
  number={2},
  pages={79--100},
  year={2001},
  publisher={Taylor \& Francis}
}

@article{eftimie2011interactions,
  title={Interactions between the immune system and cancer: a brief review of non-spatial mathematical models},
  author={Eftimie, Raluca and Bramson, Jonathan L and Earn, David JD},
  journal={Bulletin of mathematical biology},
  volume={73},
  number={1},
  pages={2--32},
  year={2011},
  publisher={Springer}
}

@article{matzavinos2004mathematical,
  title={Mathematical modelling of the spatio-temporal response of cytotoxic T-lymphocytes to a solid tumour},
  author={Matzavinos, Anastasios and Chaplain, Mark AJ and Kuznetsov, Vladimir A},
  journal={Mathematical Medicine and Biology},
  volume={21},
  number={1},
  pages={1--34},
  year={2004},
  publisher={Oxford University Press}
}

@article{anderson1998continuous,
  title={Continuous and discrete mathematical models of tumor-induced angiogenesis},
  author={Anderson, Alexander RA and Chaplain, Mark AJ},
  journal={Bulletin of mathematical biology},
  volume={60},
  number={5},
  pages={857--899},
  year={1998},
  publisher={Elsevier}
}

@article{ghaffarizadeh2018physicell,
  title={PhysiCell: An open source physics-based cell simulator for 3-D multicellular systems},
  author={Ghaffarizadeh, Ahmadreza and Heiland, Randy and Friedman, Samuel H and Mumenthaler, Shannon M and Macklin, Paul},
  journal={PLoS computational biology},
  volume={14},
  number={2},
  pages={e1005991},
  year={2018},
  publisher={Public Library of Science San Francisco, CA USA}
}

@article{nagarsheth2017chemokines,
  title={Chemokines in the cancer microenvironment and their relevance in cancer immunotherapy},
  author={Nagarsheth, Nisha and Wicha, Max S and Zou, Weiping},
  journal={Nature Reviews Immunology},
  volume={17},
  number={9},
  pages={559--572},
  year={2017},
  publisher={Nature Publishing Group UK London}
}

@article{galon2006type,
  title={Type, density, and location of immune cells within human colorectal tumors predict clinical outcome},
  author={Galon, J{\'e}r{\^o}me and Costes, Anne and Sanchez-Cabo, Fatima and Kirilovsky, Amos and Mlecnik, Bernhard and Lagorce-Pag{\`e}s, Christine and Tosolini, Marie and Camus, Matthieu and Berger, Anne and Wind, Philippe and others},
  journal={Science},
  volume={313},
  number={5795},
  pages={1960--1964},
  year={2006},
  publisher={American Association for the Advancement of Science}
}

@article{galon2019approaches,
  title={Approaches to treat immune hot, altered and cold tumours with combination immunotherapies},
  author={Galon, J{\'e}r{\^o}me and Bruni, Daniela},
  journal={Nature reviews Drug discovery},
  volume={18},
  number={3},
  pages={197--218},
  year={2019},
  publisher={Nature Publishing Group UK London}
}

@article{joyce2015t,
  title={T cell exclusion, immune privilege, and the tumor microenvironment},
  author={Joyce, Johanna A and Fearon, Douglas T},
  journal={Science},
  volume={348},
  number={6230},
  pages={74--80},
  year={2015},
  publisher={American Association for the Advancement of Science}
}

@article{keller1970initiation,
  title={Initiation of slime mold aggregation viewed as an instability},
  author={Keller, Evelyn F and Segel, Lee A},
  journal={Journal of theoretical biology},
  volume={26},
  number={3},
  pages={399--415},
  year={1970},
  publisher={Elsevier}
}

@article{hillen2009user,
  title={A user’s guide to PDE models for chemotaxis},
  author={Hillen, Thomas and Painter, Kevin J},
  journal={Journal of mathematical biology},
  volume={58},
  number={1},
  pages={183--217},
  year={2009},
  publisher={Springer}
}

@article{winkler2010aggregation,
  title={Aggregation vs. global diffusive behavior in the higher-dimensional Keller--Segel model},
  author={Winkler, Michael},
  journal={Journal of Differential Equations},
  volume={248},
  number={12},
  pages={2889--2905},
  year={2010},
  publisher={Elsevier}
}

@article{tao2012boundedness,
  title={Boundedness in a quasilinear parabolic--parabolic Keller--Segel system with subcritical sensitivity},
  author={Tao, Youshan and Winkler, Michael},
  journal={Journal of Differential Equations},
  volume={252},
  number={1},
  pages={692--715},
  year={2012},
  publisher={Elsevier}
}

@incollection{amann1993nonhomogeneous,
  title={Nonhomogeneous linear and quasilinear elliptic and parabolic boundary value problems},
  author={Amann, Herbert},
  booktitle={Function spaces, differential operators and nonlinear analysis},
  pages={9--126},
  year={1993},
  publisher={Springer}
}

@book{murray2003mathematical,
  title={Mathematical biology: II: spatial models and biomedical applications},
  author={Murray, James Dickson and Murray, James Dickson},
  volume={18},
  year={2003},
  publisher={Springer}
}

@article{filbet2006finite,
  title={A finite volume scheme for the Patlak--Keller--Segel chemotaxis model},
  author={Filbet, Francis},
  journal={Numerische Mathematik},
  volume={104},
  number={4},
  pages={457--488},
  year={2006},
  publisher={Springer}
}

@article{chertock2008second,
  title={A second-order positivity preserving central-upwind scheme for chemotaxis and haptotaxis models},
  author={Chertock, Alina and Kurganov, Alexander},
  journal={Numerische Mathematik},
  volume={111},
  number={2},
  pages={169--205},
  year={2008},
  publisher={Springer}
}

@article{saito2007conservative,
  title={Conservative upwind finite-element method for a simplified Keller--Segel system modelling chemotaxis},
  author={Saito, Norikazu},
  journal={IMA journal of numerical analysis},
  volume={27},
  number={2},
  pages={332--365},
  year={2007},
  publisher={OUP}
}

@article{eymard2000finite,
  title={Finite volume methods},
  author={Eymard, Robert and Gallou{\"e}t, Thierry and Herbin, Rapha{\`e}le},
  journal={Handbook of numerical analysis},
  volume={7},
  pages={713--1018},
  year={2000},
  publisher={Elsevier}
}

@article{beeghly2023measuring,
  title={Measuring and modelling tumour heterogeneity across scales},
  author={Beeghly, Garrett F and Shimpi, Adrian A and Riter, Robert N and Fischbach, Claudia},
  journal={Nature Reviews Bioengineering},
  volume={1},
  number={10},
  pages={712--730},
  year={2023},
  publisher={Nature Publishing Group UK London}
}

@article{desoyer2025computational,
  title={Computational frameworks for modelling cancer across scales},
  author={Desoyer, Celine and Loibner, Daniel and Brislinger, Dagmar and Baumgartner, Christian},
  journal={Clinical and Translational Discovery},
  volume={5},
  number={6},
  pages={e70093},
  year={2025},
  publisher={Wiley Online Library}
}

@article{tiwari2025molecular,
  title={Molecular basis of cancer},
  author={Tiwari, Lalima and Kujan, Omar},
  journal={Pathological Basis of Oral and Maxillofacial Diseases},
  pages={415--428},
  year={2025},
  publisher={Wiley Online Library}
}

@article{kaminska2015role,
  title={The role of the cell--cell interactions in cancer progression},
  author={Kami{\'n}ska, Katarzyna and Szczylik, Cezary and Bielecka, Zofia F and Bartnik, Ewa and Porta, Camillo and Lian, Fei and Czarnecka, Anna M},
  journal={Journal of cellular and molecular medicine},
  volume={19},
  number={2},
  pages={283--296},
  year={2015},
  publisher={Wiley Online Library}
}

@article{joshi2026dynamic,
  title={The Dynamic Tumor ECM: Biophysical Cues, Cellular Crosstalk, and Disease Progression},
  author={Joshi, Omkar and Hamidi, Hellyeh and Mathieu, Mathilde and Ivaska, Johanna},
  journal={Current Opinion in Biomedical Engineering},
  pages={100652},
  year={2026},
  publisher={Elsevier}
}

@article{he2022extracellular,
  title={Extracellular matrix in cancer progression and therapy},
  author={He, Xiuxiu and Lee, Byoungkoo and Jiang, Yi},
  journal={Medical Review},
  volume={2},
  number={2},
  pages={125--139},
  year={2022},
  publisher={De Gruyter}
}

@article{giuliani2025immune,
  title={Immune and non-immune cell fencing of tumor cells is a widespread and functionally relevant spatial pattern in solid cancers},
  author={Giuliani, Giuseppe and Chavan, Dhruv and Gaddam, Rushil and Stewart, William C and Li, Zihai and Jayaprakash, Ciriyam and Das, Jayajit},
  journal={Computational and Structural Biotechnology Journal},
  year={2025},
  publisher={Elsevier}
}

@article{biswas2022inference,
  title={Inference on spatial heterogeneity in tumor microenvironment using spatial transcriptomics data},
  author={Biswas, Antara and Ghaddar, Bassel and Riedlinger, Gregory and De, Subhajyoti},
  journal={Computational and systems oncology},
  volume={2},
  number={3},
  pages={e21043},
  year={2022},
  publisher={Wiley Online Library}
}

@article{kondo2010reaction,
  title={Reaction-diffusion model as a framework for understanding biological pattern formation},
  author={Kondo, Shigeru and Miura, Takashi},
  journal={science},
  volume={329},
  number={5999},
  pages={1616--1620},
  year={2010},
  publisher={American Association for the Advancement of Science}
}

@article{mempel2024chemokines,
  title={How chemokines organize the tumour microenvironment},
  author={Mempel, Thorsten R and Lill, Julia K and Altenburger, Lukas M},
  journal={Nature Reviews Cancer},
  volume={24},
  number={1},
  pages={28--50},
  year={2024},
  publisher={Nature Publishing Group UK London}
}

@article{tao2024global,
  title={Global smooth solutions in a chemotaxis system modeling immune response to a solid tumor},
  author={Tao, Youshan and Winkler, Michael},
  journal={Proceedings of the American Mathematical Society},
  volume={152},
  number={10},
  pages={4325--4341},
  year={2024}
}

@article{li2026global,
  title={Global boundedness of an ND chemotactic tumor immune evasion system},
  author={Li, Kaiqiang and Li, Yingying},
  journal={Discrete and Continuous Dynamical Systems-B},
  pages={0--0},
  year={2026},
  publisher={Discrete and Continuous Dynamical Systems-B}
}

@article{ai2015reaction,
  title={Reaction, diffusion and chemotaxis in wave propagation},
  author={Ai, Shangbing and Huang, Wenzhang and Wang, Zhi-An},
  journal={Discrete Contin. Dyn. Syst. Ser. B},
  volume={20},
  number={1},
  pages={1--21},
  year={2015}
}

@book{ke2022analysis,
  title={Analysis of reaction-diffusion models with the taxis mechanism},
  author={Ke, Yuanyuan and Li, Jing and Wang, Yifu},
  year={2022},
  publisher={Springer}
}

@article{kiselev2022chemotaxis,
  title={Chemotaxis and reactions in biology},
  author={Kiselev, Alexander and Nazarov, Fedor and Ryzhik, Lenya and Yao, Yao},
  journal={Journal of the European Mathematical Society},
  volume={25},
  number={7},
  pages={2641--2696},
  year={2022}
}

@article{mcdonald2023computational,
  title={Computational approaches to modelling and optimizing cancer treatment},
  author={McDonald, Thomas O and Cheng, Yu-Chen and Graser, Christopher and Nicol, Phillip B and Temko, Daniel and Michor, Franziska},
  journal={Nature Reviews Bioengineering},
  volume={1},
  number={10},
  pages={695--711},
  year={2023},
  publisher={Nature Publishing Group UK London}
}

@article{gopukumar2026multiscale,
  title={Multiscale predictive cellular modeling: integrating hypothesis grammars, digital twins, and multi-omics for In silico oncology and precision theranostics},
  author={Gopukumar, ST and Dwivedi, Dyumn and Rahamathulla, Mohamed and Ahmed, Mohammed Muqtader and Soni, Tanveen Kaur and Natarajan, Praveen Ganesh and Ramkanth, S and Mitra, Arpita and Das, Uddalak},
  journal={Functional \& Integrative Genomics},
  volume={26},
  number={1},
  pages={113},
  year={2026},
  publisher={Springer}
}

@article{lan2025shallow,
  title={From shallow to deep: the evolution of machine learning and mechanistic model integration in cancer research},
  author={Lan, Yunduo and Shin, Sung-Young and Nguyen, Lan K},
  journal={Current Opinion in Systems Biology},
  volume={40},
  pages={100541},
  year={2025},
  publisher={Elsevier}
}

@article{de2025radiation,
  title={Radiation-induced lymphopenia: From mathematical modeling towards mechanistic learning},
  author={de Kermenguy, Fran{\c{c}}ois and Morel, Daphn{\'e} and El-Aichi, Mohammed and Barbolosi, Dominique and Deutsch, Eric and Robert, Charlotte},
  journal={International Journal of Radiation Oncology* Biology* Physics},
  year={2025},
  publisher={Elsevier}
}

@inproceedings{laslo2025mechanistic,
  title={Mechanistic learning with guided diffusion models to predict spatio-temporal brain tumor growth},
  author={Laslo, Daria and Georgiou, Efthymios and Linguraru, Marius George and Rauschecker, Andreas M and M{\"u}ller, Sabine and Jutzeler, Catherine R and Br{\"u}ningk, Sarah},
  booktitle={International Workshop on Learning with Longitudinal Medical Images and Data},
  pages={68--79},
  year={2025},
  organization={Springer}
}

@article{mehdizadeh2023targeting,
  title={Targeting myeloid-derived suppressor cells in combination with tumor cell vaccination predicts anti-tumor immunity and breast cancer dormancy: An in silico experiment},
  author={Mehdizadeh, Reza and Shariatpanahi, Seyed Peyman and Goliaei, Bahram and R{\"u}egg, Curzio},
  journal={Scientific reports},
  volume={13},
  number={1},
  pages={5875},
  year={2023},
  publisher={Nature Publishing Group UK London}
}

@article{barrera2025impact,
  title={Impact of mild hyperthermia on tumor-immune dynamics explored through mathematical modeling},
  author={Barrera-Le{\'o}n, Andr{\'e}s Sebasti{\'a}n and Rodr{\'\i}guez-Qui{\~n}ones, Leoncio and De Mendoza, Adriana M},
  journal={Scientific Reports},
  year={2025},
  publisher={Nature Publishing Group UK London}
}

@article{hadjigeorgiou2024hybrid,
  title={Hybrid model of tumor growth, angiogenesis and immune response yields strategies to improve antiangiogenic therapy},
  author={Hadjigeorgiou, Andreas G and Stylianopoulos, Triantafyllos},
  journal={npj Biological Physics and Mechanics},
  volume={1},
  number={1},
  pages={4},
  year={2024},
  publisher={Nature Publishing Group}
}

@article{agosti2023image,
  title={An image-informed Cahn--Hilliard Keller--Segel multiphase field model for tumor growth with angiogenesis},
  author={Agosti, Abramo and Lucifero, A Giotta and Luzzi, Sabino},
  journal={Applied Mathematics and Computation},
  volume={445},
  pages={127834},
  year={2023},
  publisher={Elsevier}
}

@article{ferreira2002reaction,
  title={Reaction-diffusion model for the growth of avascular tumor},
  author={Ferreira Jr, SC and Martins, Marcelo Lobato and Vilela, MJ},
  journal={Physical Review E},
  volume={65},
  number={2},
  pages={021907},
  year={2002},
  publisher={APS}
}

@article{chaplain1996avascular,
  title={Avascular growth, angiogenesis and vascular growth in solid tumours: The mathematical modelling of the stages of tumour development},
  author={Chaplain, Mark AJ},
  journal={Mathematical and computer modelling},
  volume={23},
  number={6},
  pages={47--87},
  year={1996},
  publisher={Elsevier}
}

@incollection{savic2022heterogeneous,
  title={Heterogeneous tumour modeling using PhysiCell and its implications in precision medicine},
  author={Savi{\'c}, Milo{\v{s}} and Kurbalija, Vladimir and Balaz, Igor and Ivanovi{\'c}, Mirjana},
  booktitle={Cancer, Complexity, Computation},
  pages={157--189},
  year={2022},
  publisher={Springer}
}

@article{chaudhary2025role,
  title={Role of chemokines in aging and age-related diseases},
  author={Chaudhary, Jitendra Kumar and Danga, Ajay Kumar and Kumari, Anita and Bhardwaj, Akshay and Rath, Pramod C},
  journal={Mechanisms of Ageing and Development},
  volume={223},
  pages={112009},
  year={2025},
  publisher={Elsevier}
}

@article{foeng2022harnessing,
  title={Harnessing the chemokine system to home CAR-T cells into solid tumors},
  author={Foeng, Jade and Comerford, Iain and McColl, Shaun R},
  journal={Cell Reports Medicine},
  volume={3},
  number={3},
  year={2022},
  publisher={Elsevier}
}

@article{brummel2023tumour,
  title={Tumour-infiltrating lymphocytes: from prognosis to treatment selection},
  author={Brummel, Koen and Eerkens, Anneke L and de Bruyn, Marco and Nijman, Hans W},
  journal={British journal of cancer},
  volume={128},
  number={3},
  pages={451--458},
  year={2023},
  publisher={Nature Publishing Group UK London}
}

@article{lopez2025biological,
  title={Biological and clinical significance of tumour-infiltrating lymphocytes in the era of immunotherapy: a multidimensional approach},
  author={Lopez de Rodas, Miguel and Villalba-Esparza, Maria and Sanmamed, Miguel F and Chen, Lieping and Rimm, David L and Schalper, Kurt A},
  journal={Nature Reviews Clinical Oncology},
  volume={22},
  number={3},
  pages={163--181},
  year={2025},
  publisher={Nature Publishing Group UK London}
}

@article{yu2024extracellular,
  title={Extracellular matrix stiffness and tumor-associated macrophage polarization: new fields affecting immune exclusion},
  author={Yu, Ke-Xun and Yuan, Wei-Jie and Wang, Hui-Zhen and Li, Yong-Xiang},
  journal={Cancer Immunology, Immunotherapy},
  volume={73},
  number={6},
  pages={115},
  year={2024},
  publisher={Springer}
}

@article{bruni2023cancer,
  title={Cancer immune exclusion: breaking the barricade for a successful immunotherapy},
  author={Bruni, Sofia and Mercogliano, Maria Florencia and Mauro, Florencia Luciana and Cordo Russo, Rosalia In{\'e}s and Schillaci, Roxana},
  journal={Frontiers in Oncology},
  volume={13},
  pages={1135456},
  year={2023},
  publisher={Frontiers Media SA}
}

@article{horstmann20031970,
  title={From 1970 until present: The Keller-Segel model in chemotaxis and its consequences. I},
  author={Horstmann, D},
  journal={Jahresber. Deutsch. Math.-Verein.},
  volume={105},
  pages={103},
  year={2003}
}

@article{arumugam2021keller,
  title={Keller-Segel chemotaxis models: a review},
  author={Arumugam, Gurusamy and Tyagi, Jagmohan},
  journal={Acta Applicandae Mathematicae},
  volume={171},
  number={1},
  pages={6},
  year={2021},
  publisher={Springer}
}

@article{de2005validated,
  title={A validated mathematical model of cell-mediated immune response to tumor growth},
  author={de Pillis, Lisette G and Radunskaya, Ami E and Wiseman, Charles L},
  journal={Cancer research},
  volume={65},
  number={17},
  pages={7950--7958},
  year={2005},
  publisher={American Association for Cancer Research}
}

@article{miller2003autonomous,
  title={Autonomous T cell trafficking examined in vivo with intravital two-photon microscopy},
  author={Miller, Mark J and Wei, Sindy H and Cahalan, Michael D and Parker, Ian},
  journal={Proceedings of the National Academy of Sciences},
  volume={100},
  number={5},
  pages={2604--2609},
  year={2003},
  publisher={The National Academy of Sciences}
}

@article{thurley2015three,
  title={Three-dimensional gradients of cytokine signaling between T cells},
  author={Thurley, Kevin and Gerecht, Daniel and Friedmann, Elfriede and H{\"o}fer, Thomas},
  journal={PLoS computational biology},
  volume={11},
  number={4},
  pages={e1004206},
  year={2015},
  publisher={Public Library of Science San Francisco, CA USA}
}

@article{poznansky2000active,
  title={Active movement of T cells away from a chemokine},
  author={Poznansky, Mark C and Olszak, Ivona T and Foxall, Russell and Evans, Richard H and Luster, Andrew D and Scadden, David T},
  journal={Nature medicine},
  volume={6},
  number={5},
  pages={543--548},
  year={2000},
  publisher={Nature Publishing Group}
}

@article{sutherland1990functional,
  title={Functional characterization of individual human hematopoietic stem cells cultured at limiting dilution on supportive marrow stromal layers.},
  author={Sutherland, Heather J and Lansdorp, Peter M and Henkelman, Don H and Eaves, Allen C and Eaves, Connie J},
  journal={Proceedings of the National Academy of Sciences},
  volume={87},
  number={9},
  pages={3584--3588},
  year={1990}
}

@article{wang2025analysis,
  title={Analysis framework for stochastic predator--prey model with demographic noise},
  author={Wang, Louis Shuo and Yu, Jiguang},
  journal={Results in Applied Mathematics},
  volume={27},
  pages={100621},
  year={2025},
  publisher={Elsevier}
}

@article{wang2025analysis1,
  title={Analysis and mean-field limit of a hybrid pde-abm modeling angiogenesis-regulated resistance evolution},
  author={Wang, Louis Shuo and Yu, Jiguang and Li, Shijia and Liu, Zonghao},
  journal={Mathematics},
  volume={13},
  number={17},
  pages={2898},
  year={2025},
  publisher={MDPI}
}

@article{liang2025global,
  title={Global well-posedness and stability of nonlocal damage-structured lineage model with feedback and dedifferentiation},
  author={Liang, Ye and Wang, Louis Shuo and Yu, Jiguang and Liu, Zonghao},
  journal={Mathematics},
  volume={13},
  number={22},
  pages={3583},
  year={2025},
  publisher={MDPI}
}

@article{wang2026algebraic,
  title={Algebraic--spectral thresholds and discrete--continuous stability transfer in Leslie--Gower systems},
  author={Wang, Louis Shuo and Yu, Jiguang},
  journal={Electronic Research Archive},
  volume={34},
  number={1},
  pages={251--290},
  year={2026},
  publisher={American Institute of Mathematical Sciences (AIMS)}
}

@article{wang2026damage,
  title={A damage-structured PDE model of stem cell hierarchies: The dual role of dedifferentiation in tissue homeostasis and aging},
  author={Wang, Louis Shuo and Yu, Jiguang and Liu, Zonghao},
  journal={Plos one},
  volume={21},
  number={2},
  pages={e0335163},
  year={2026},
  publisher={Public Library of Science San Francisco, CA USA}
}

@article{wang2026breakdown,
  title={The breakdown of linear quasi-cycles: Demographic noise and absorbing boundaries in finite predator--prey systems},
  author={Wang, Louis Shuo and Yu, Jiguang and Liang, Ye and Zhang, Jilin},
  journal={Electronic Research Archive},
  volume={34},
  number={6},
  pages={4248--4289},
  year={2026},
  publisher={American Institute of Mathematical Sciences (AIMS)}
}

@article{yu2026rigorous,
  title={Rigorous analysis of a nonlocal transport--renewal system for physiologically structured populations},
  author={Yu, Jiguang and Wang, Louis Shuo and Liang, Ye},
  journal={Mathematical Methods in the Applied Sciences},
  year={2026}
}

@article{yu2026beyond,
  title={Beyond diagonal noise: A better predator-prey modeling framework with cross-covariance},
  author={Yu, Jiguang and Wang, Louis Shuo},
  journal={Plos one},
  volume={21},
  number={5},
  pages={e0350127},
  year={2026},
  publisher={Public Library of Science San Francisco, CA USA}
}

@article{yu2026microscopic,
  title={From microscopic damage to macroscopic games: a dimensionality reduction of stem cell homeostasis},
  author={Yu, Jiguang and Wang, Louis Shuo and Ban, Shihan and Liang, Ye},
  journal={Transport Phenomena},
  volume={1},
  number={2},
  pages={20260037},
  year={2026},
  publisher={De Gruyter}
}

@article{cai2026optimal,
  title={Optimal harvesting for nonlinear size-structured populations with nonlocal environmental feedback},
  author={Cai, Jie and Chen, Xiaoyang and Gu, Longfei and Chen, Jiayao and Chu, Nuo and Wang, Louis Shuo and Liang, Ye and Yu, Jiguang},
  journal={Mathematics},
  volume={14},
  number={11},
  pages={2025},
  year={2026},
  publisher={MDPI}
}

@article{wang2026elliptic,
  title={Elliptic criticality versus volterra memory in indirect chemotaxis cascades},
  author={Wang, Louis Shuo and Yu, Jiguang and Liang, Ye and Zhang, Jilin},
  journal={Transport Phenomena},
  volume={1},
  number={2},
  pages={20260061},
  year={2026},
  publisher={De Gruyter}
}

@article{liu2025bidirectional,
  title={Bidirectional endothelial feedback drives turing-vascular patterning and drug-resistance niches: a hybrid PDE-agent-based study},
  author={Liu, Zonghao and Wang, Louis Shuo and Yu, Jiguang and Zhang, Jilin and Martel, Erica and Li, Shijia},
  journal={Bioengineering},
  volume={12},
  number={10},
  pages={1097},
  year={2025},
  publisher={MDPI}
}

@article{liang2026separation,
  title={Separation-like irregularity and sample size optimism in high-discrimination logistic prediction models},
  author={Liang, Ye and Wang, Louis Shuo and Yu, Jiguang and Zan, Xin},
  journal={PloS one},
  volume={21},
  number={8},
  pages={e0342286},
  year={2026},
  publisher={Public Library of Science San Francisco, CA USA}
}

@article{yu2026size,
  title={Size-Selective Threshold Harvesting Under Nonlocal Crowding and Exogenous Recruitment},
  author={Yu, Jiguang and Wang, Louis Shuo and Liang, Ye},
  journal={International Journal of Differential Equations},
  volume={2026},
  number={1},
  pages={2523535},
  year={2026},
  publisher={Wiley}
}

@article{yu2026pattern,
  title={Pattern suppression and recovery under one-way versus two-way chemotactic coupling in hybrid partial differential equation--ordinary differential equation models},
  author={Yu, Jiguang and Wang, Louis Shuo and Liu, Zonghao and Liu, Jingfeng},
  journal={Transport Phenomena},
  volume={1},
  number={1},
  pages={20260023},
  year={2026},
  publisher={De Gruyter}
}

@article{yu2026age,
  title={Age-Structured Harvesting Models: A Structural Comparison of Rate-Control and Effort-Control Optimality Systems},
  author={Yu, Jiguang and Wang, Louis Shuo and Liang, Ye},
  journal={International Journal of Mathematics and Mathematical Sciences},
  volume={2026},
  number={1},
  pages={6212516},
  year={2026},
  publisher={Wiley}
}

@article{liu2026ftu,
  title={FTU-Seek: Foundation Model-Guided Hard-Negative Learning for Sparse Functional Tissue Unit Segmentation},
  author={Liu, Zonghao and Su, Lei and Yu, Jiguang and Geng, Xuqing and Wang, Louis Shuo and Wang, Jianmin and Liu, Jingfeng},
  journal={Biomedicines},
  volume={14},
  number={9},
  pages={1935},
  year={2026},
  publisher={Multidisciplinary Digital Publishing Institute}
}

@article{liu2026computational,
  title={Computational Oncology of Chemotaxis-Driven Tumour--Immune Spatial Patterning and Stability},
  author={Liu, Z and Yu, J and Wang, LS and Su, L and Liang, Y and Du, Y and Liu, J},
  journal={Bioengineering},
  volume={13},
  pages={952},
  year={2026},
  publisher={MDPI}
}

@article{yu2026chemotactic,
  title={Chemotactic Feedback Controls Patterning in Hybrid Tumor--Stroma Model},
  author={Yu, J and Wang, LS and Liu, Z and others},
  journal={Bulletin of Mathematical Biology},
  volume={88},
  pages={169},
  year={2026},
  publisher={Springer}
}

@article{yu2026fullcovariance,
  author  = {Yu, Jiguang and Wang, Louis Shuo and Gao, Yuansheng and Liang, Ye},
  title   = {Full-covariance chemical Langevin predator--prey diffusion with absorbing boundaries},
  journal = {Royal Society Open Science},
  volume  = {13},
  number  = {8},
  pages   = {260260},
  year    = {2026},
  month   = {8},
  doi     = {10.1098/rsos.260260}
}

@article{wang2025multi,
  title={Multi-strategy hybrid improved intelligent algorithm for solving uav-mtsp},
  author={Wang, Zixin and Wang, Danqing and Yu, Jiguang},
  journal={Information Technology and Control},
  volume={54},
  number={2},
  pages={413--438},
  year={2025}
}

@inproceedings{gao2022rolling,
  title={Rolling prediction model of closing price based on EEMD data noise reduction and HGS-DELM},
  author={Gao, Yuansheng and Li, Lei and Yu, Jiguang},
  booktitle={2022 International Conference on Data Analytics, Computing and Artificial Intelligence (ICDACAI)},
  pages={255--260},
  year={2022},
  organization={IEEE}
}

\end{document}